\documentclass[12pt,reqno]{amsart}
\usepackage{amsmath, amssymb, amsfonts, amsthm}
\usepackage{mathtools,xcolor}
\allowdisplaybreaks
\usepackage{hyperref}
\usepackage[all,cmtip]{xy}
\usepackage{tkz-euclide}
\usepackage{tikz-cd}
\usepackage{extarrows}
\usepackage[most]{tcolorbox}   
\usepackage{xcolor}            
\newtheorem{thm}{Theorem}[section]
\newtheorem*{thm*}{Theorem}
\newtheorem{lem}[thm]{Lemma}
\newtheorem{prop}[thm]{Proposition}
\theoremstyle{definition}
\newtheorem{dfn}[thm]{Definition}

\theoremstyle{plain}

\numberwithin{equation}{section}
\usepackage[a4paper, top = 1.2in, bottom = 1.2in, left = 1.2in, right = 1.2in]{geometry}
\numberwithin{equation}{section}

\newcommand{\N}{\mathbb{N}}
\newcommand{\Q}{\mathbb{Q}}

\newcommand{\Z}{\mathbb{Z}}

\newcommand{\F}{\mathbb{F}}

\newcommand{\mcS}{\mathcal{S}}

\newcommand{\mcW}{\mathcal{W}}

\newcommand{\mfa}{\mathfrak{a}}

\newcommand{\mfp}{\mathfrak{p}}
\newcommand{\mfl}{\mathfrak{l}}

\newcommand{\SL}{\mathrm{SL}}
\newcommand{\GL}{\mathrm{GL}}
\newcommand{\GU}{\mathrm{GU}}
\newcommand{\GO}{\mathrm{GO}}

\newcommand{\PSL}{\mathrm{PSL}}

\newcommand{\Aut}{\mathrm{Aut}}
\newcommand{\End}{\mathrm{End}}

\newcommand{\Spec}{\mrm{Spec}}
\newcommand{\sep}{\mrm{sep}}

\newcommand{\Gal}{\mrm{Gal}}
\newcommand{\Ima}{\mrm{Im }}

\newcommand{\un}{\mrm{un}}

\newcommand{\Sp}{\textrm{Sp}}

\newcommand{\lra}{\longrightarrow}
\newcommand{\ra}{\rightarrow}

\newcommand{\mrm}[1]{\mathrm{#1}}

\def\1{1\!\!1}

\newcommand{\psmat}[4]{\bigl( \begin{smallmatrix} #1 & #2 \\ #3 & #4 \end{smallmatrix} \bigr)}

\newcommand{\csmat}[9]{\biggl( \begin{smallmatrix} #1 & #2 & #3 \\
		#4 & #5 & #6 \\ #7 & #8 & #9 \end{smallmatrix} \biggr)}

\title[Surjectivity, Density of Drinfeld modules of rank $2$ and $3$]{On the surjectivity of $(T)$-adic Galois Representations of Drinfeld $A$-Modules of Rank $2$ and $3$: Density results}
\author[N. Kumar]{Narasimha Kumar}
\email{narasimha@math.iith.ac.in}
\address{
	Department of Mathematics \\
	Indian Institute of Technology Hyderabad\\
	Kandi, Sangareddy - 502284\\
	INDIA.
}
\author[D. Shit]{Dwipanjana Shit}
\email{ma22resch01001@iith.ac.in}
\address{
	Department of Mathematics \\
	Indian Institute of Technology Hyderabad\\
	Kandi, Sangareddy - 502284\\
	INDIA.
}

\keywords{Drinfeld modules, Galois representations, Surjectivity, Density results}
\subjclass[2010]{Primary 11G09, 11F80; Secondary 11R45}
\begin{document}
	\begin{abstract}
		Let $\F_{q}$ be a finite field,  and  $A:=\F_{q}[T]$. In this article, we give explicit criteria, involving concrete valuations, on the coefficients of the Drinfeld $A$-modules of rank $ r$ for $r=2,3$, which ensure the surjectivity of the associated $(T)$-adic Galois representation. As a result, we shall calculate the densities of such Drinfeld $A$-modules. 
    
	\end{abstract}
	\maketitle

	\section{Introduction and Literature}
	
	\subsection{Introduction}
    In his seminal work~\cite{Ser72}, Serre studied the adelic Galois representations attached to elliptic curves over $\Q$ without complex multiplication (CM) and proved the following fundamental result.
	\begin{thm}[\cite{Ser72}]
		If $E$ is an elliptic curve over $\mathbb{Q}$ without CM, then the associated adelic Galois representation
		$\rho_{E}: \Gal(\bar{\Q}/\Q) \ra \varprojlim_{m}\ \Aut(E[m]) \cong \GL_{2}(\widehat{\Z})$
		has open image. In particular, $[\GL_{2}(\widehat{\Z}): \Ima(\rho_{E})] < \infty.$ 
	\end{thm}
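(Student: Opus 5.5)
The plan follows Serre's original strategy and splits into two parts: first show that the image is open in $\GL_2(\Z_\ell)$ for every prime $\ell$, and then show that the mod $\ell$ representation $\bar\rho_{E,\ell}:\Gal(\bar\Q/\Q)\ra\GL_2(\F_\ell)$ is surjective for all $\ell$ outside a finite set. The adelic statement then follows from a group-theoretic lemma. Since $\det\rho_E$ is the cyclotomic character, which is surjective onto $\widehat{\Z}^\times$, it is enough to control the image in $\SL_2$. For $\ell\ge5$, any closed subgroup of $\SL_2(\Z_\ell)$ that surjects onto $\SL_2(\F_\ell)$ is all of $\SL_2(\Z_\ell)$. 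Moreover, $\PSL_2(\F_\ell)$ is simple and non-abelian for $\ell\ge5$, and these groups are pairwise non-isomorphic and do not occur as Jordan–Hölder factors of one another. A Goursat-type argument then shows that the image in $\prod_{\ell\notin S}\GL_2(\Z_\ell)$ is as large as the determinant allows. Combined with openness at the finitely many $\ell\in S$, this gives an open image in $\GL_2(\widehat{\Z})$, and hence finite index.

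For $\ell$-adic openness, I would use that $\Ima\rho_{E,\ell}$ is an $\ell$-adic Lie group whose Lie algebra $\mathfrak g_\ell\subseteq\mathfrak{gl}_2(\Q_\ell)$ acts irreducibly, commutant-wise, as the ring of endomorphisms. By Faltings's isogeny theorem (or, in Serre's original argument, the Tate curve at a prime of multiplicative reduction, or integrality of $j$ otherwise), the commutant of $\mathfrak g_\ell$ is $\End(E)\otimes\Q_\ell=\Q_\ell$ because $E$ has no CM. Hodge–Tate theory supplies the homothety, and together these force $\mathfrak g_\ell=\mathfrak{gl}_2$. Hence the image is open.

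The main step, and the main obstacle, is surjectivity mod $\ell$ for $\ell\gg0$. I would use Dickson's classification of subgroups of $\GL_2(\F_\ell)$: a proper subgroup $G$ with surjective determinant lies in a Borel subgroup, lies in the normalizer of a split or non-split Cartan subgroup, or has projective image $A_4$, $S_4$ or $A_5$.

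To exclude each case for $\ell$ large:
\begin{itemize}
\item \emph{Borel case.} The curve would have a rational $\ell$-isogeny. This is excluded for large $\ell$ because isogenous curves have bounded height (Serre's original route used a prime of multiplicative reduction and the inertia characters instead).
\item \emph{Exceptional case.} Use Serre's description of tame inertia at $\ell$. For $\ell$ large, it contains an element whose projective order is at least $(\ell-1)/2>5$, which rules out $A_4$, $S_4$ and $A_5$.
\item \emph{Cartan normalizer case.} The associated quadratic character is unramified at $\ell$ by the same inertia analysis. At any prime $p$ where it is nontrivial, $\operatorname{tr}\rho(\mathrm{Frob}_p)=a_p\equiv0\pmod\ell$. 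Choosing one such $p$ with $a_p\neq0$, which exists because $E$ is non-CM and by Chebotarev, the divisibility bounds $\ell$.
\end{itemize}

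The potential-multiplicative-reduction trick handles non-integral $j$ cleanly. For integral $j$, a uniform version genuinely needs Faltings-type or isogeny-height input, so I expect this step to be the hardest.
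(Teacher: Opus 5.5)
The paper does not prove this theorem. It is Serre's result, quoted in the introduction only as motivation, so the benchmark is Serre's own argument. You follow his architecture: $\ell$-adic openness for every $\ell$, surjectivity mod $\ell$ for $\ell\gg0$ via Dickson and tame inertia at $\ell$, then a Goursat assembly, with Faltings' theorems replacing several of Serre's original inputs. The strategy is sound, but as written several steps do not go through.

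\textbf{The Cartan-normalizer case.} This is the most serious hole. The quadratic character $\varepsilon_\ell$ depends on $\ell$. Choosing, for each $\ell$, a prime $p$ with $\varepsilon_\ell(\mathrm{Frob}_p)=-1$ and $a_p\neq0$ only gives $\ell\le2\sqrt{p}$ with $p=p(\ell)$, which bounds nothing. What you must extract from the inertia computation is uniformity:
\begin{itemize}
\item $\varepsilon_\ell$ is unramified at $\ell$, and by N\'eron--Ogg--Shafarevich at every good $p\neq\ell$. Hence it is unramified outside the fixed set of bad primes of $E$, so only finitely many quadratic characters $\varepsilon_1,\dots,\varepsilon_k$ can occur.
\item Fix once and for all, for each $i$, a good prime $p_i$ with $\varepsilon_i(\mathrm{Frob}_{p_i})=-1$ and $a_{p_i}\neq0$. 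Such $p_i$ exists: for a fixed auxiliary $\ell_0$, $\rho_{E,\ell_0}$ maps $\{\sigma:\varepsilon_i(\sigma)=-1\}$ onto a coset of the open subgroup $\rho_{E,\ell_0}(\ker\varepsilon_i)$, which cannot lie in the trace-zero locus; Chebotarev in a finite layer does the rest.
\item This gives $\ell\le\max_i2\sqrt{p_i}$.
\end{itemize}
You also omit the subcase where the image lies in the non-split Cartan $C$ itself. There $\varepsilon_\ell$ is trivial and the trace argument says nothing. Over $\Q$ this subcase is excluded by complex conjugation: it has order $2$ and determinant $-1$, hence eigenvalues $1,-1\in\F_\ell$, while the only elements of $C$ with eigenvalues in $\F_\ell$ are scalars.

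\textbf{The assembly.} A closed subgroup of $X\times Y$ with open projection to $X$ and surjective projection to $Y$ need not be open (take the graph of a surjection), so ``combined with openness at $\ell\in S$'' is not an argument. You need two further steps.
\begin{itemize}
\item First, openness in $\prod_{\ell\in S}\GL_2(\Z_\ell)$. Intersect with a product of open pro-$\ell$ subgroups and use that a pro-nilpotent group is the product of its Sylow subgroups.
\item Second, a Goursat step between $S$ and its complement. Apply it to the closed commutator subgroup $G'\subseteq G'_S\times\prod_{\ell\notin S}\SL_2(\Z_\ell)$ of $G=\Ima\rho_E$; note $G'_S$ is open in $\prod_{\ell\in S}\SL_2(\Z_\ell)$ since $[\mathfrak{gl}_2,\mathfrak{gl}_2]=\mathfrak{sl}_2$. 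A common quotient is a quotient of a perfect group, so if non-trivial it has a non-abelian Jordan--H\"older factor. That factor would have to be $\PSL_2(\F_\ell)$ for some $\ell\notin S$ and also a simple group occurring in a subgroup of some $\GL_2(\Z_{\ell'})$ with $\ell'\in S$.
\end{itemize}
Take $2,3,5\in S$ here. The relevant statement concerns subgroups, not just the full groups: $A_5\cong\PSL_2(\F_5)$ occurs in subgroups of $\GL_2(\F_\ell)$ for $\ell\equiv\pm1\pmod 5$.

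\textbf{Two smaller points.}
\begin{itemize}
\item Scalar commutant plus homotheties does not force $\mathfrak g_\ell=\mathfrak{gl}_2$; the Borel subalgebra has both properties. You need Faltings' semisimplicity to get absolute irreducibility and hence $\mathfrak{sl}_2\subseteq\mathfrak g_\ell$. The homotheties then come free from $\det=\chi_\ell$, with no Hodge--Tate input.
\item In the Borel case, bounded height only gives finitely many $j$-invariants among the targets of rational $\ell$-isogenies. You still need that two cyclic isogenies of distinct prime degrees $\ell_1,\ell_2$ onto curves with the same $j$ yield an endomorphism of $E$ over $\bar{\Q}$ of non-square degree $\ell_1\ell_2$, i.e.\ CM.
\end{itemize}
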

	Analogously, Pink and Rütsche~\cite{PR09} studied the adelic Galois representations attached to Drinfeld $A$-modules, the function field analogues of elliptic curves, and proved the following result.
	\begin{thm}[\cite{PR09}]
		Let $\varphi$ be a generic Drinfeld $A$-module of rank $r$. If $\End_{\bar{F}}(\varphi)=\varphi(A)$, then the image of the associated adelic Galois representation $\rho_{\varphi}(G_{F})$ is open in $\GL_{r}(\widehat{A})$. Equivalently,  $[\GL_{r}(\widehat{A}):\rho_{\varphi}(G_{F})]<\infty.$
	\end{thm}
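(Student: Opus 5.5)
The statement is the Pink–Rütsche open image theorem, which this paper cites rather than proves, so what follows is a sketch of the standard strategy of \cite{PR09}. The proof splits into an $\ell$-adic step, an adelic assembly step, and a finiteness step.

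\emph{The $\mfp$-adic step.} Fix a prime $\mfp$ of $A$ and let $\Gamma_\mfp$ denote the image of $G_F$ in $\GL_r(A_\mfp)$. Pink's earlier $\mfp$-adic theorem shows that $\Gamma_\mfp$ is open in $\GL_r(A_\mfp)$ whenever $\End_{\bar F}(\varphi)=\varphi(A)$. Its ingredients are the Tate conjecture for Drinfeld modules (Taguchi, Tamagawa), which says that $V_\mfp(\varphi)$ is absolutely irreducible with commutant $F_\mfp$. Combined with Pink's classification of compact Zariski-dense subgroups of $\GL_r$ over local fields, this forces $\Gamma_\mfp$ to be open in $\GL_r(A_\mfp)$. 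Frobenius elements then control the determinant, and via Anderson's motives or Hayes' theory the determinant is the Carlitz character, which has open image in $A_\mfp^\times$.

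\emph{Residual surjectivity for almost all $\mfp$.} Next I would show that for all but finitely many $\mfp$ the residual image contains $\SL_r(A/\mfp)$. Combined with openness and surjectivity of the determinant, this gives $\Gamma_\mfp\supseteq \SL_r(A_\mfp)$ for almost all $\mfp$. The residual statement uses Pink's classification of subgroups of $\GL_r(\F_\mfp)$ that act irreducibly. Irreducibility mod $\mfp$ for almost all $\mfp$ comes from isogeny theorems, and the remaining cases (imprimitive, subfield, and exceptional subgroups) are excluded using Frobenius traces and inertia at places of bad reduction or at $\infty$. A Goursat-type lemma lifts the result from mod $\mfp$ to $\mfp$-adic, since the lift is automatic when the residual image contains $\SL_r$ and the residue field is large.

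\emph{Adelic assembly.} To pass to $\prod_\mfp$, I would show that the derived subgroup of the image projects to a group containing $\prod_{\mfp\notin S}\SL_r(A_\mfp)$. Since $\PSL_r(\F_\mfp)$ are pairwise non-isomorphic simple groups for distinct $\mfp$ (away from small cases), a Goursat–Ribet argument gives independence. The finitely many exceptional primes are handled by openness at each of them, and the abelian part is handled by the Carlitz/Hayes computation of $\det\rho_\varphi$ having open image in $\widehat{A}^\times$.

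The main obstacle is the uniform residual step: excluding, for almost all $\mfp$, the maximal subgroups of $\GL_r(\F_\mfp)$ other than those containing $\SL_r$. My first attempt would be to use the Frobenius characteristic polynomials together with the Newton polygon at $\infty$ (via tame inertia weights) to rule out each Aschbacher class.
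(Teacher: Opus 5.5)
The paper gives no proof of this statement; it is quoted from \cite{PR09}. So the comparison is with what a proof actually needs, and there your outline has two genuine gaps. Both come from transplanting Serre's number-field argument into equal characteristic.

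\textbf{First gap: lifting from the residual to the $\mfp$-adic image.} This passage is not automatic. Since $A_\mfp\cong\F_\mfp[[\pi]]$ (with $\pi$ a generator of $\mfp$) contains its residue field, the reduction map $\GL_r(A_\mfp)\to\GL_r(\F_\mfp)$ splits. The constant matrices already form a finite subgroup with full residual image. Openness and a surjective determinant do not rescue this. Suppose $p\nmid r$, and let $H\subseteq\GL_r(A/\mfp^2)$ be generated by the constant matrices $\GL_r(\F_\mfp)$ (via the coefficient field $\F_\mfp\subseteq A/\mfp^2$) and the scalars $(1+\pi a)I$, $a\in\F_\mfp$. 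Its preimage $M\subseteq\GL_r(A_\mfp)$ is open, satisfies $M \bmod \mfp=\GL_r(\F_\mfp)$ and $\det M=A_\mfp^\times$. Yet $M$ does not contain the elementary matrix $1+\pi E_{12}\in\SL_r(A_\mfp)$. This is precisely why Proposition~\ref{Pink_R_cond_T_adic_sur} requires a non-scalar element of $M \bmod \mfp^2$ congruent to the identity modulo $\mfp$. It is also why the paper must produce one from inertia at $\mfl$ (Propositions~\ref{non-scalar_mod_T_square} and~\ref{non-scalar_mod_T_square_2}). Your outline needs an argument giving such an element for almost all $\mfp$ simultaneously, for a $\varphi$ that may have good reduction at every finite place. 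Nothing in the sketch supplies it.

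\textbf{Second gap: the adelic assembly rests on a false premise.} $\PSL_r(\F_\mfp)$ depends only on $\deg\mfp$, so infinitely many primes give isomorphic groups, not just finitely many small cases. In fact $A_\mfp\cong\F_{q^d}[[\pi]]$ for every $\mfp$ of degree $d$. Hence $\SL_r(A_\mfp)\cong\SL_r(A_{\mfp'})$ as profinite groups whenever $\deg\mfp=\deg\mfp'$, and all congruence kernels are pro-$p$ for the same $p$. Goursat's lemma therefore cannot exclude that the image in $\SL_r(A_\mfp)\times\SL_r(A_{\mfp'})$ is the graph of an isomorphism. Ruling this out needs arithmetic input that distinguishes $\mfp$ from $\mfp'$, applied uniformly over infinitely many pairs. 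For instance, one can compare the characteristic polynomials of $\rho_\varphi(\mathrm{Frob}_v)$, which lie in $A[X]$, modulo $\mfp$ and modulo $\mfp'$.

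A smaller point concerns your $\mfp$-adic step. Absolute irreducibility does not by itself give Zariski density in $\GL_r$. In equal characteristic, Zariski density does not give openness either: $\GL_r(\F_\mfp[[\pi^p]])$ is Zariski dense but not open. So Pink's theorem really needs its Frobenius-weight and trace-field arguments, though as a citation that step is acceptable.
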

	A natural question in both settings is to determine when the image has index $1$, i.e., when the adelic Galois representation is surjective? For elliptic curves over $\Q$, Serre proved in~\cite{Ser72} that the image has index at least $2$ in $\GL_{2}(\widehat{\Z})$. In contrast, Greicius showed in~\cite{Gre10} that there exists a number field $K$ and an elliptic curve $E/K$ for which the adelic Galois representation is surjective.

    However, for Drinfeld $A$-modules, there exist many explicit examples admitting surjective adelic Galois representations (cf.~ \cite{Hay74, Zyw11, Zyw25, Che22, Che25, KS25, KS26}). In particular, this implies that their $\mfp$-adic Galois representations are surjective for every $\mfp \in \Omega_A(:=\Spec(A)\setminus\{(0)\})$. In fact, in~\cite{Zyw25}, Zywina proved that there exists a density~$1$ subset of Drinfeld $A$-modules of rank~$2$ whose $\mathfrak{p}$-adic Galois representations are surjective for every $\mfp \in \Omega_A$.

    In a complementary direction, one can fix a prime ideal $\mfp \in \Omega_A$, and can look for Drinfeld $A$-modules having surjective $\mfp$-adic Galois representations. In~\cite{Ray24b}, Ray proved that, for every rank $r \ge 2$, the set of Drinfeld $A$-modules whose $(T)$-adic Galois representation is surjective has density~$1$. However, this result is existential in nature: it shows that almost all Drinfeld $A$-modules of rank $r \geq 2$ have surjective $(T)$-adic Galois representations, but it does not furnish an explicit criterion to decide, for a given Drinfeld $A$-module of rank $r \ge 2$, whether its associated $(T)$-adic Galois representation is surjective.

    However, especially for rank~$2$, in~\cite{Ray24a}, Ray gave an explicit criterion on the coefficients of $\varphi_T$, formulated in terms of certain valuation conditions, that guarantees the surjectivity of the associated $(T)$-adic Galois representation. Moreover, he proved that such Drinfeld modules
    have lower density at least $\frac{1}{q^7}$. More precisely:
    \begin{thm}[\cite{Ray24a}, Theorem 3.1]
            \label{Ray24a_Main_Result_1}
			Let $q\geq 5$ be odd and $\eta\in \F_{q}^{\times}$ be a non-square, and let $a_{1}, a_{2}$ be distinct non-zero elements in $\F_{q}^{\times}$. Let $\varphi$ be a Drinfeld $A$-module of rank $2$ defined by $\varphi_{T}=T+g_{1}\tau+g_{2}\tau^2$ where $g_{1},g_{2}\in A$ such that
			\begin{itemize}
				\item $\nu_{(T)}(g_1)=0$, $\nu_{(T-a_{1})}(g_{1})\geq 1$, and $\nu_{(T-a_2)}(g_{1})=0$,
				\item $\nu_{(T)}(g_2)=0$, $g_{2}\equiv -a_{1}\eta^{-1}\pmod{(T-a_{1})}$ and $\nu_{(T-a_{2})}(g_{2})=1$.
			\end{itemize} 
			Then, the representation $\rho_{\varphi,(T)}$ is surjective. Moreover, the set of such Drinfeld $A$-module of rank $2$ has lower density at least $ q^{-7}$. 
            \end{thm}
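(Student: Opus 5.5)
The plan is the standard two-step strategy for $\mfp$-adic surjectivity: first show that the mod-$(T)$ representation $\bar\rho_{\varphi,(T)}: G_F\to \GL_2(\F_q)$ is surjective, then lift to $\GL_2(A_{(T)})=\GL_2(\F_q[[T]])$. For the lifting step I would use the standard lemma (Pink--Rütsche, or Ray's version for $\F_q[[T]]$): for $q\ge 5$, a closed subgroup $H\subseteq \GL_2(\F_q[[T]])$ with $\det H=\F_q[[T]]^\times$, whose reduction mod $T$ is $\GL_2(\F_q)$ and whose image mod $T^2$ contains the full kernel $1+T\,M_2(\F_q)$ (equivalently, is all of $\GL_2(A/T^2)$), equals $\GL_2(\F_q[[T]])$. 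Surjectivity of the determinant is automatic: $\det\rho_{\varphi,(T)}$ is the $(T)$-adic representation of the rank-$1$ Drinfeld module $\psi_T=T-g_2\tau$, and its surjectivity onto $\F_q[[T]]^\times$ follows from Hayes' theory of the Carlitz-type module, up to a twist by $-g_2$ that is handled by the condition $\nu_{(T)}(g_2)=0$.

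For surjectivity mod $T$, I would use local information at the primes $(T-a_1)$ and $(T-a_2)$. At $\mfl_2=(T-a_2)$, the condition $\nu_{\mfl_2}(g_2)=1$ with $\nu_{\mfl_2}(g_1)=0$ says that $\varphi$ has stable bad reduction of rank $1$ there, and the Newton polygon of $\varphi_T(x)/x=T+g_1x^{q-1}+g_2x^{q^2-1}$ has a segment of slope $1/(q^2-q)$. Hence the inertia group at $\mfl_2$ acts on $\varphi[T]$ through a Tate-uniformization-type unipotent, i.e.\ the image contains a transvection of order $p$. At $\mfl_1=(T-a_1)$, we have good reduction, and the Frobenius $\mathrm{Frob}_{\mfl_1}$ has characteristic polynomial $X^2 - \bar a X+\bar b$, computed from the reduction $\bar\varphi_T=a_1+\bar g_1\tau+\bar g_2\tau^2$ over $\F_q$, where $\bar g_1=0$ and $\bar g_2=-a_1\eta^{-1}$. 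Then $\bar\varphi_T=a_1-a_1\eta^{-1}\tau^2$, so $\tau^2$ acts as $\eta$ times a factor involving $T$. Reducing modulo $T$, the Frobenius $\tau$ satisfies $\tau^2\equiv$ a non-square scalar (I need to check the constant carefully: modulo $T$ one gets $\tau^2=\eta(1-T/a_1)\equiv\eta$). Its characteristic polynomial $X^2-\eta$ is therefore irreducible over $\F_q$, so the image contains an element of a nonsplit Cartan subgroup acting irreducibly.

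A subgroup of $\GL_2(\F_q)$ containing a transvection and acting irreducibly contains $\SL_2(\F_q)$, by Dickson's classification (with $q\ge5$ ruling out exceptional cases, and $q$ odd being used in the irreducibility argument). Together with the surjective determinant, this gives all of $\GL_2(\F_q)$.

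The main obstacle is the mod-$T^2$ step: showing that the image of $G_F$ in $\GL_2(A/T^2)$ contains the kernel $1+T M_2(\F_q)$. Given that the image mod $T$ contains $\SL_2$, the intersection of the image with the kernel is an $\F_p[\SL_2(\F_q)]$-submodule of $M_2(\F_q)\cong \F_q\oplus\mathfrak{sl}_2$. I would argue that it is nonzero and not central by lifting the transvection: inertia at $\mfl_2$ acts on $\varphi[T^2]$ via a unipotent element whose $p$-th power is trivial but which is not congruent to a scalar. Irreducibility of $\mathfrak{sl}_2(\F_q)$ as an $\F_p[\SL_2(\F_q)]$-module (for $q\ge5$, $p$ odd) then forces all of $\mathfrak{sl}_2$, and the determinant supplies the scalar part.

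The density claim is then a CRT count. The conditions at the three primes $(T),(T-a_1),(T-a_2)$ are congruence conditions modulo $T\cdot(T-a_1)^{2}\cdot (T-a_2)^2$ on the pair $(g_1,g_2)$, with the extra condition $\nu_{(T-a_1)}(g_1)\ge1$ needing only the modulus $(T-a_1)$. Counting the proportion of admissible residues gives at least $\frac{(q-1)}{q}\cdot\frac1q\cdot\frac{q-1}{q}\cdot\frac{q-1}{q}\cdot\frac1q\cdot\frac{q-1}{q^2}\ge q^{-7}$ for the pairs, and the lower density follows.
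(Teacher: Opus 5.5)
The paper does not prove this statement; it quotes it from Ray. The nearest argument is the paper's rank-$2$ theorem for $\mcS^2$. That theorem contains your hypotheses (take $\mfl=(T-a_2)$) and uses the same Pink--R\"utsche skeleton you use.

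Your irreducibility step via $\mathrm{Frob}_{(T-a_1)}$ is correct. Its characteristic polynomial is $X^2-\eta(1-T/a_1)\equiv X^2-\eta \pmod{T}$, and this is exactly where the hypothesis $g_2\equiv -a_1\eta^{-1}$ is used. The paper's substitute is Eisenstein at $1/T$ for $x^{q^2-1}+\frac{g_1}{T}x^{q^2-q}+\frac{g_2}{T}$. That requires $\deg g_2=0$, so it never applies here, since $(T-a_2)\mid g_2$. It also cannot hold on all of $\mcS^2$: $(1,-(T+1))\in\mcS^2$ with $\mfl=(T+1)$, yet $\varphi_T(1)=T+1-(T+1)=0$, so $\F_q\subset\varphi[T]$ is a $G_F$-stable line.

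Your determinant argument is fine once you say why the twist is harmless. The Kummer character of $-g_2$ is unramified at $(T)$, while inertia at $(T)$ surjects onto $A_{(T)}^\times$ under the Carlitz character. Your density count $(q-1)^4/q^7\ge q^{-7}$ is right. There are, however, two genuine gaps.

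\textbf{First gap: the mod-$T$ group theory.} An irreducible subgroup of $\GL_2(\F_q)$ containing a transvection need not contain $\SL_2(\F_q)$ when $q$ is not prime, because of subfield subgroups. Take $q=27$, which the statement allows. The group $\F_{27}^\times\cdot\GL_2(\F_3)$:
\begin{itemize}
\item acts irreducibly on $\F_{27}^2$;
\item contains $\psmat{1}{1}{0}{1}$;
\item has surjective determinant, since $-1$ is a non-square in $\F_{27}$;
\item contains $c\psmat{0}{-1}{1}{0}$ with $c^2=-\eta$, whose characteristic polynomial is $X^2-\eta$.
\end{itemize}
So nothing you have established excludes it.

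The repair is to read more off your own Newton polygon. The slope $1/(q^2-q)$ shows that $q(q-1)$ divides the ramification index at $(T-a_2)$. The inertia image lies in $\{\psmat{1}{b}{0}{c}: b\in\F_q,\ c\in\F_q^\times\}$, a group of order $q(q-1)$. Hence it contains all of $\{\psmat{1}{b}{0}{1}: b\in\F_q\}$, which is a Sylow $p$-subgroup of $\GL_2(\F_q)$. Zywina's Lemma A.1 (Sylow $p$-subgroup plus irreducibility gives $\SL_2(\F_q)$) then applies. This is how the paper obtains $\SL_2$.

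\textbf{Second gap: the mod-$T^2$ step.} As you describe it, this step does not produce what Pink--R\"utsche need. A non-scalar unipotent element of the image mod $T^2$ need not lie in $\ker(\GL_2(A/(T^2))\to\GL_2(\F_q))$. In equal characteristic every $\psmat{1}{b}{0}{1}$ with $b\in A/(T^2)$ has trivial $p$-th power. Moreover, the embedding by constants $\GL_2(\F_q)\hookrightarrow\GL_2(A/(T^2))$ splits reduction, so mod-$T$ information alone yields no kernel elements at all.

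What you must show is that inertia at $\mfl=(T-a_2)$ mod $T^2$ contains $\psmat{1}{T}{0}{1}$. Equivalently, $q^2$ must divide the ramification index of $F_{\mfl}^{\un}(\varphi[T^2])/F_{\mfl}^{\un}$. This needs a level-$T^2$ computation:
\begin{itemize}
\item The lower Newton polygon of $\varphi_{T^2}(x)/x$ at $\mfl$ passes through $(q^2-1,0)$, $(q^3-1,1)$ and $(q^4-1,1+q^2)$. It therefore has a segment of slope $1/(q^2(q-1))$. Equivalently, $\nu(z)=-1/(q^2(q-1))$ for $\psi_{T^2}(z)=\lambda$ in the Tate datum.
\item Tate uniformization puts the inertia image in $\{\psmat{1}{b}{0}{c}: b\in A/(T^2),\ c\in\F_q^\times\}$.
\end{itemize}
Together these give every $\psmat{1}{b}{0}{1}$ with $b\in A/(T^2)$, as in the paper's proposition for $\mfa=(T^2)$. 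The single element $\psmat{1}{T}{0}{1}$ is exactly the hypothesis of Pink--R\"utsche's Proposition 4.1. Your upgrade to the full kernel via $\mathfrak{sl}_2$ and the determinant is therefore unnecessary.
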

    In this article, for Drinfeld $A$-modules of ranks $2$, $3$, we give explicit verifiable conditions that ensure the surjectivity of the associated $(T)$-adic Galois representation, while improving upon existing criteria as in  Theorem~\ref{Ray24a_Main_Result_1} for rank $2$.
    
    For rank  $r \geq 4$, obtaining analogous results becomes technically more involved, making the situation considerably more delicate. Even in the case $r=3$, although the explicit verfiable conditions on the coefficients of  $\varphi_T$  resemble those in the rank $2$ case, the proof requires a different approach. Moreover, this method does not seem to extend readily to ranks $r \geq 4$.

    To state our main Theorems, we first introduce the following notation. 
    For any integer $r\geq 2$, define $\mcW^{r}:=\{\vec{g}=(g_1,g_2,\ldots,g_r)\in A^r: g_r\neq 0\}$.
     \begin{dfn}
       Let $\mcS^{r}$ denotes the subset of $\mcW^{r}$  consisting of $\vec{g}=(g_1,g_2,\ldots,g_r)$ such that
       \begin{itemize}
           \item $\nu_{T}(g_i)=0$ for $i=1,2,\ldots,r$;
           \item there exists $\mfl\in \Omega_{A}\setminus{(T)}$ such that $\nu_{\mfl}(g_{r-1})=0$ and $p\nmid\nu_{\mfl}(g_r)$.
       \end{itemize}
   \end{dfn}

  We now state the main Theorem of this article.
 \begin{thm*}[Theorems~\ref{T_adic_sur_rank_3}, \ref{density_rank_3}, \ref{T_adic_sur_rank_2}, \ref{density_rank_2}]
For $\vec g=(g_1,g_2,\ldots,g_r)\in \mcW^r$, let  $\varphi^{\vec g}$ be the corresponding Drinfeld $A$-module of rank $r$ defined by $$ \varphi^{\vec g}_T = T+g_1\tau+g_2\tau^2+\cdots+g_r\tau^r. $$

\begin{itemize}
\item[(i)] If $r=3$, $q>9$ odd, $\vec g\in \mcS^3$, then the $(T)$-adic Galois representation
$$
\rho_{\varphi^{\vec g},(T)}:G_F \longrightarrow
 \GL_3(A_{(T)})
$$
is surjective. Moreover, the density of $\mcS^3$ is $\left(1-\frac{1}{q}\right)^3.$

\item[(ii)] If $r=2$, $q\ge4$, and $\vec g\in \mcS^2$, then the $(T)$-adic Galois representation
$$
\rho_{\varphi^{\vec g},(T)}:G_F \longrightarrow
 \GL_2(A_{(T)})
$$
is surjective. Moreover, the density of $\mcS^2$ is  $\left(1-\frac{1}{q}\right)^2.$
\end{itemize}
\end{thm*}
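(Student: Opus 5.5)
The plan has two independent parts: surjectivity for each rank, and the density count. For surjectivity I will use the standard reduction for $\mfp=(T)$. A closed subgroup $H\subseteq\GL_r(A_{(T)})$ is all of $\GL_r(A_{(T)})$ as soon as three things hold: $\det H=A_{(T)}^\times$, the reduction of $H$ mod $T$ is all of $\GL_r(\F_q)$, and $H$ meets the first congruence kernel in enough elements to generate $\mathfrak{sl}_r(\F_q)$ (or $\mathfrak{gl}_r$) after reduction mod $T^2$. For $q$ large relative to $r$, a Pink--Rütsche type lifting lemma then lets us bootstrap to every level. This is where the hypotheses $q\ge4$ and $q>9$ enter.

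The determinant is handled by the usual rank-one argument. $\det\rho_{\varphi,(T)}$ is the $(T)$-adic representation of the Drinfeld module $\psi_T=T+(-1)^{r-1}g_r\tau$. Since $\nu_T(g_r)=0$, this is a Carlitz-type twist, and its $(T)$-adic image is all of $A_{(T)}^\times$: the Carlitz $T$-tower is totally ramified at $T$, and a unit twist does not change the inertia image there.

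\textbf{Mod $T$ image.} Here the hypotheses are used as follows.
\begin{enumerate}
\item Since every $\nu_T(g_i)=0$, the reduction $\varphi_T\equiv g_1\tau+\cdots+g_r\tau^r$ at the place $T$ has height $1$. The theory of the Newton polygon of $\varphi_T(x)/x=T+g_1x^{q-1}+\cdots+g_rx^{q^r-1}$ at $T$ then shows that one root has valuation $1/(q-1)$ and the others are units. Hence inertia at $T$ acts on $\varphi[T]$ through $\F_q^\times$ on a line, acting trivially on a complementary quotient of dimension $r-1$.
\item At $\mfl$, we have $\nu_\mfl(g_{r-1})=0$ and $p\nmid \nu_\mfl(g_r)=:n$. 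The Newton polygon has a last segment of slope $-n/(q^r-q^{r-1})$, and $p\nmid n$ makes the tame inertia act on the corresponding $q^{r-1}(q-1)$ roots. Moreover $\varphi$ has stable reduction of rank $r-1$ at $\mfl$, and $\varphi[T]$ contains a Tate-uniformization piece. So inertia at $\mfl$ contains a nontrivial transvection, or, after taking a $p$-power-free argument, an element of order $p$ fixing a hyperplane.
\item The image thus contains a transvection and has an element acting as a scalar $\F_q^\times$ on a line. Irreducibility then follows: any invariant subspace would be incompatible with the tame-inertia action at $T$ combined with the transvection, using that the Galois action on the roots of the last Newton segment is transitive. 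By the McLaughlin classification of irreducible subgroups of $\GL_r(\F_q)$ generated by transvections, the image contains $\SL_r(\F_q)$. Together with the surjective determinant this gives all of $\GL_r(\F_q)$.
\end{enumerate}
For $r=3$, McLaughlin's list has exceptional cases for small $q$, which is one place $q>9$ odd is used.

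\textbf{Lifting and the main obstacle.} The expected hardest step is producing the congruence-kernel elements, especially for $r=3$. I would first try the inertia at $T$ acting on $\varphi[T^2]$. The Newton polygon of $\varphi_{T^2}$ gives the ramification index $q(q-1)$ on the connected part, which yields a nontrivial element of $1+T\mathfrak{gl}_r$ in the image. Conjugating it by the full $\GL_r(\F_q)$ gives an $\F_q$-subspace stable under the adjoint action. That subspace must be $\mathfrak{sl}_r$ or $\mathfrak{gl}_r$ unless it is the scalars, which are excluded because the element is not scalar. For $q$ odd and $p\nmid r$, the adjoint module is semisimple and we are done. For $r=3$ with $p=3$, I would handle $\mathfrak{sl}_3\supset$ scalars separately using the determinant surjectivity. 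For $r=2$ with $q\ge4$ possibly even, I would rely on the known lemma that $\SL_2(A_{(T)})$ has no proper closed subgroups surjecting onto $\SL_2(A/T^2)$.

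\textbf{Density.} The density statement is a local count. The conditions $\nu_T(g_i)=0$ are independent and have density $(1-1/q)^r$. It then suffices to show that the second condition holds on a density-one subset. For fixed $N$ distinct primes $\mfl_1,\dots,\mfl_N\neq(T)$, the conditions at different $\mfl_j$ are independent by CRT. The probability that a given $\mfl$ satisfies $\nu_\mfl(g_{r-1})=0$ and $\nu_\mfl(g_r)=1$ is $(1-|\mfl|^{-1})(|\mfl|^{-1}-|\mfl|^{-2})$. The failure probability is therefore at most $\prod_j\bigl(1-|\mfl_j|^{-1}(1-|\mfl_j|^{-1})^2\bigr)$, which tends to $0$ as $N\to\infty$ since $\sum_\mfl|\mfl|^{-1}$ diverges. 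Hence $\mcS^r$ has density $(1-1/q)^r$ for $r=2,3$.
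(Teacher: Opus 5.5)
\paragraph{The gap: irreducibility.} Your proof breaks at step (3), the irreducibility of $\varphi^{\vec g}[T]$, and this step cannot be repaired under the stated hypotheses.

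The local information you collect does not exclude an invariant line. Even the full unipotent image of inertia at $\mfl$ only forces a proper $G_F$-stable subspace into the hyperplane $e_\Lambda(\psi[T])$, the roots that are $\mfl$-adic units. (The $p$-part of the ramification there is wild, not tame.) Inertia at $T$ stabilizes the connected line $\varphi[T]^0$. So a global line lying in both is perfectly consistent with your constraints.

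Such lines do occur. With $\mfl=(T-1)$, the vectors $\vec g=(-1-(T-1)T^q,\ T-1)$ and $\vec g=(-1-T^q-(T-1)T^{q^2+q},\ 1,\ T-1)$ lie in $\mcS^2$ and $\mcS^3$ respectively. Let $\alpha^{q-1}=T$. Then $\alpha^{q^i}=T^{1+q+\cdots+q^{i-1}}\alpha$, hence
\[
\varphi^{\vec g}_T(\alpha)=T\alpha\,\bigl(1+g_1+g_2T^{q}+g_3T^{q^2+q}\bigr)=0
\]
(with $g_3=0$ in rank $2$). Moreover $\sigma(\alpha)/\alpha\in\mu_{q-1}=\F_q^\times$ for every $\sigma\in G_F$. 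So $\F_q\alpha\subset\varphi^{\vec g}[T]$ is $G_F$-stable, $\bar\rho_{\varphi^{\vec g},(T)}$ lands in a Borel subgroup, and $\rho_{\varphi^{\vec g},(T)}$ is not surjective, for every $q$. Neither McLaughlin's theorem nor any transvection or Aschbacher-type argument can get past this.

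\paragraph{Comparison with the paper.} This is exactly where the paper's own proof fails too. Lemma~\ref{varphi_T_irreducible_F_T_G_F_module} applies Eisenstein at $1/T$ to $x^{q^r-1}+\sum_{i}(g_i/T)x^{q^r-q^i}$. That requires $\nu_\infty(g_i/T)\ge 1$, i.e.\ $\deg g_i\le 0$ for $i<r$, together with $\nu_\infty(g_r/T)=1$, i.e.\ $\deg g_r=0$. This is impossible once $\nu_\mfl(g_r)\ge 1$. So the surjectivity parts of (i) and (ii) need an extra hypothesis forcing irreducibility. One option is to require that the Newton polygon of $\varphi^{\vec g}_T(x)/x$ at $\infty$ be a single segment with $\gcd(\deg g_r-1,\,q^r-1)=1$; this makes the polynomial irreducible over $F_\infty$.

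Apart from this, your outline is close to the paper's.
\begin{itemize}
\item You use the same Pink--R\"utsche reduction.
\item Your determinant argument via the rank-one module $T+(-1)^{r-1}g_r\tau$ is a valid substitute for the paper's citation of Ray.
\item Your CRT sieve over finitely many $\mfl$ is a correct, self-contained proof of the density-one statement the paper imports from Ray's Proposition~6.1. So the density claims $(1-1/q)^r$ stand.
\end{itemize}
Two smaller points. At $\mfl$ you should extract the whole group $\bigl\{\begin{pmatrix} I_{r-1} & * \\ 0 & 1\end{pmatrix}\bigr\}$, as the paper does via the Sylow argument, rather than a single transvection. Also, the element needed modulo $T^2$ comes for free by running that same inertia-at-$\mfl$ argument on $\varphi[T^2]$. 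Your inertia-at-$T$ route does not show that the element you produce is trivial modulo $T$.
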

A precise notion of density for a set of Drinfeld $A$-modules is given in 
\S $2.5$.

	\section{Preliminaries}
	Throughout this article, we stick to the following notations. 
    Let $q$ be a prime power and  $A:=\F_{q}[T]$ with the field of fractions $F:=\F_{q}(T)$, $G_{F}:=\Gal(F^{\sep}/F)$, where $F^{\sep}$ is the separable closure of $F$ in $\bar{F}$.  Let $\widehat{A} :=\varprojlim_{\mfa \lhd A}A/\mfa$, where $\mfa$ runs over all non-zero ideals of $A$, denote the profinite completion of $A$. For a commutative ring $R$ with unity, $R^{\times}$ denotes the set of all units in $R$.
    
	Let $\Omega_{A}:=\Spec(A)\setminus \{(0)\}$. To avoid notational complexity, we shall use $\mfa\subseteq A$ to denote both the generator and the ideal it generates. Let $\mfp\in \Omega_{A}$, $A_{\mfp}$ denote the completion of $A$ with respect to $\mfp$ with field of fraction $F_{\mfp}$, which is complete with respect to the normalized discrete valuation $\nu_p$, with the residue field $\F_{\mfp} := A_\mfp/\mfp A_\mfp(\cong A/\mfp)$. Throughout the article, we identify $\F_{(T)} $ with $ \F_q$, and use them interchangeably. Finally, $G_{F_{\mfp}}$ denotes the Galois group $\Gal(F_{\mfp}^{\sep}/F_{\mfp})$.
	
	\subsection{Drinfeld Modules}
    
    Let $K$ be an $A$-field $\gamma:A\ra K$. Let $K\{\tau\}$ denote the ring of twisted polynomials over $K$, i.e., polynomials in $\tau$ with coefficients in $K$ equipped with the usual addition and multiplication defined by $(a\tau^i)(b\tau^j):=ab^{q^i}\tau^{i+j}$. For $f(\tau)=a_{h}\tau^h+a_{h+1}\tau^{h+1}+\cdots+a_{d}\tau^d  \in K\{\tau\}$
	with $0\leq h\leq d$ and $a_{h} a_{d}\neq 0$, define
	$$\mathrm{ht}_{\tau}(f):=h\ \mathrm{and}\ \deg_{\tau}(f):=d.$$
	\begin{dfn}[Drinfeld module]
        \label{Drinfeld-Definition}
		A Drinfeld $K$-module of rank $r\geq 1$ is an $\F_{q}$-algebra homomorphism
		\[\begin{aligned}
			\varphi: A& \lra K\{\tau\} \\
			a & \longmapsto \varphi_a=\gamma(a)+g_1(a)\tau+\cdots+g_{r \deg_T(a)}(a)\tau^{r \deg_T(a)},
		\end{aligned}\]
		with $g_{r\deg_T(a)}(a)\neq 0$ if $a \neq 0$. If the coefficients $g_1(a),g_2(a),\ldots,g_{r \deg_T(a)}(a)\in K$ of $\varphi_{a}$ all lie in $A$ for all $a\in A$, then $\varphi$ is called a Drinfeld $A$-module. Also, $\varphi$ is referred to as generic if $\ker(\gamma)=0$.
    \end{dfn}
    \subsection{Torsion subgroups attached to  Drinfeld modules}
    For each $a\in A$, the element $\varphi_a$ corresponds to an $\F_q$-linear polynomial $\varphi_{a}(x)=\gamma(a)x+\sum_{i=1}^{r\deg_{T}(a)}g_i(a)x^{q^i}$. This endows $F^{\sep}$ with a (twisted) $A$-module structure via $b\cdot \alpha:=\varphi_b(\alpha)$ for all $b\in A$ and $\alpha\in F^{\sep}$, where $\varphi_b(\alpha)$ the evaluation of the $\F_q$-linear polynomial $\varphi_b(x)$ at $\alpha$.
    
    For any non-zero polynomial $a\in A$, the $a$-torsion of $\varphi$ is defined by
	$$\varphi[a]:=\{\alpha\in F^{\sep}|a\cdot \alpha=0 \}= \{\alpha\in F^{\sep}|\varphi_{a}(\alpha)=0 \}.$$
	Note that, $\varphi[a]$ is a finite dimensional $\F_{q}$-vector space. In fact, $\varphi[a]$ is an $A$-submodule of $F^{\sep}$ since for any
	$b\in A$ and $\alpha\in \varphi[a]$, we have
	$$\varphi_{a}(\varphi_{b}(\alpha))=\varphi_{ab}(\alpha)=\varphi_{ba}(\alpha)=\varphi_{b}(\varphi_{a}(\alpha))=0.$$
    For any non-zero ideal $\mathfrak{a}\subseteq A$ with monic generator $a$, we define $\varphi[\mfa]:= \varphi[a]$. By~\cite[Corollary 3.5.3]{Pap23}, the module $\varphi[\mfa]\cong (A/\mfa)^r$, if $\varphi$ is generic. 

    \subsection{Galois representations}
	Let $\varphi$ be a generic Drinfeld $F$-module of rank $r$. 
	\begin{itemize}
		\item The mod-$\mfa$  Galois representation of $\varphi$ is denoted by
		$$\bar{\rho}_{\varphi,\mfa}:G_{F}\longrightarrow \Aut_{A}(\varphi[\mfa]))\cong \GL_{r}(A/\mfa).$$
		
		\item For any $\mfp \in \Omega_A$, the $\mfp$-adic Galois representation of $\varphi$ is denoted by
		$${\rho}_{\varphi,\mfp}:G_{F}\longrightarrow \varprojlim_{i}\ \Aut_A(\varphi[\mfp^i])\cong\Aut_{A_{\mfp}}(T_{\mfp}(\varphi))\cong \GL_{r}(A_{\mfp}),$$ 
        where $T_{\mfp}(\varphi)$ denotes the $\mfp$-adic Tate module of $\varphi$. 
	\end{itemize}
    \subsection{Stable/Good reduction}
    
	For any Drinfeld $F$-module $\varphi$ and $\mfp\in \Omega_{A}$, the localized Drinfeld $F_\mfp$-module $\varphi_{\mfp}$ is defined by the composite
	$$\varphi_{\mfp}:A\xrightarrow{\varphi} F\{\tau\}\ra F_{\mfp}\{\tau\},$$
	where the second map arises from the inclusion $F\hookrightarrow F_{\mfp}$.
	
	\begin{dfn}[Stable/Good reduction]
		The Drinfeld $F$-module $\varphi$ of rank $r$ is said to have stable reduction at $\mfp$ if there exists a Drinfeld $A_{\mfp}$-module $\psi$, isomorphic to $\varphi_{\mfp}$ over $F_\mfp$, such that its reduction
		$$\bar{\psi}:A\lra \F_{\mfp}\{\tau\},$$
		is a Drinfeld $\F_{\mfp}$-module. The rank of $\bar{\psi}$ is called the reduction rank of $\varphi$ at $\mfp$. If the reduction rank of $\varphi$ at $\mfp$  is equal to $r$, then we say $\varphi$ has good reduction at $\mfp$.
	\end{dfn}
    

	\subsubsection{Height of a Drinfeld module}
	For $\mfp\in \Omega_{A}$ with good reduction for the Drinfeld $F$-module $\varphi$ of rank $r$ at $\mfp$, consider the reduced Drinfeld module
	$$\bar{\psi}:A\lra \F_{\mfp}\{\tau\}.$$
	Since $\mfp$ lies in the kernel of the reduction map $\gamma:A\ra \F_{\mfp}$, the constant term of $\bar{\psi}_{\mfp}$ is $0$. Hence, $\mathrm{ht}_{\tau}(\bar{\psi}_{\mfp})>0$. The height of $\bar{\psi}$ is defined by
	$$H_{\mfp}(\bar{\psi}):=\frac{\mathrm{ht}_{\tau}(\bar{\psi}_\mfp)}{\deg_{T}(\mfp)}.$$
    We refer to this as the reduction height of $\varphi$ at $\mfp$ and denote it by $H_\mfp$ 
    if $\varphi$ is clear from the context. By~\cite[Lemma 3.2.11]{Pap23}, the integer $H_{\mfp} \in [1, r]$.
	
	\subsection{Tate uniformization}
	\begin{dfn}
		Let $\mfp\in \Omega_{A}$ and $\psi:A\ra A_{\mfp}\{\tau\}$ be a Drinfeld $A_{\mfp}$-module. A $\psi$-lattice of rank $d$ is a free $A$-submodule $\Lambda\subseteq {}^{\psi}F_{\mfp}^{\sep}$ of rank $d$, which is invariant under the action of $G_{F_{\mfp}}$ and discrete with respect to the topology of the local field  $F_{\mfp}^{\sep}$. Here, ${}^{\psi}F_{\mfp}^{\sep}$ denotes the $A$-action on $F_{\mfp}^{\sep}$ induced via $\psi$.
	\end{dfn}
	
	Let $r$ and $d$ be two positive integers. A Tate datum over $A_{\mfp}$ is a pair $(\psi,\Lambda)$, where $\psi$ is a Drinfeld $A_{\mfp}$-module of rank $r$  and $\Lambda$ is a $\psi$-lattice of rank $d$. Two such Tate datum $(\psi,\Lambda)$ and $(\psi^\prime,\Lambda^\prime)$ are isomorphic if there is an isomorphism from $\psi$ to $\psi^\prime$ such that the induced $A$-module homomorphism ${}^{\psi}F_{\mfp}^{\sep}\ra {}^{\psi^\prime}F_{\mfp}^{\sep}$ gives an $A$-module isomorphism $\Lambda \ra \Lambda^\prime$.  The correspondence below is well-known, and we refer to it as the Drinfeld-Tate uniformization (cf.~\cite[Theorem 6.2.11]{Pap23}).
	
	\begin{thm}[\cite{Dri74}, \S 7]\label{Drinfeld}
		Let $r,d$ be two positive integers. There is a one-to-one correspondence between the following two sets:
		\begin{enumerate}
			\item The set of $F_{\mfp}$-isomorphism classes of Tate datum $(\psi,\Lambda)$ where $\psi$ is a Drinfeld $A_{\mfp}$-module of rank $r$ with good reduction, $\Lambda$ is a $\psi$-lattice of rank $d$.
			\item The set of $F_{\mfp}$-isomorphism classes of Drinfeld modules $\varphi:A\ra A_{\mfp}\{\tau\}$ of rank $r+d$ with stable reduction of rank $r$.
		\end{enumerate}
	\end{thm}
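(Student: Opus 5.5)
The plan is to prove the correspondence by constructing explicit maps in both directions via analytic exponential functions over $\mathbb{C}_\mfp$, the completion of an algebraic closure of $F_\mfp$, and then checking that the two constructions are mutually inverse on isomorphism classes. This is the argument of \cite{Dri74}, \S 7 (see also \cite[Theorem 6.2.11]{Pap23}).

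\textbf{From Tate data to Drinfeld modules.} Start with a Tate datum $(\psi,\Lambda)$. Since $\Lambda$ is discrete in $F_\mfp^{\sep}$, the product
$$e_\Lambda(z):=z\prod_{0\neq\lambda\in\Lambda}\Bigl(1-\frac{z}{\lambda}\Bigr)$$
converges to an entire, $\F_q$-linear, surjective function on $\mathbb{C}_\mfp$ with kernel $\Lambda$. Because $\Lambda$ is $G_{F_\mfp}$-stable, $e_\Lambda$ has coefficients in $F_\mfp$.

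For $a\in A$, the lattice $\psi_a^{-1}(\Lambda)$ contains $\Lambda$, and the quotient $\psi_a^{-1}(\Lambda)/\Lambda$ is finite of order $q^{(r+d)\deg a}$. To see this, count $q^{r\deg a}$ points from $\psi[a]$ and $q^{d\deg a}$ points from $\Lambda/a\Lambda$. Define $\varphi_a$ as the $\F_q$-linear polynomial
$$\varphi_a(x):=a\,x\prod_{0\neq u\in e_\Lambda(\psi_a^{-1}\Lambda)}\Bigl(1-\frac{x}{u}\Bigr).$$
Comparing zeros and leading terms gives $\varphi_a\circ e_\Lambda=e_\Lambda\circ\psi_a$, and this identity shows that $a\mapsto\varphi_a$ is a ring homomorphism of rank $r+d$.

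It remains to check that $\varphi$ is integral and has stable reduction of rank $r$, using a Newton polygon argument on $\varphi_a$:
\begin{itemize}
\item The roots coming from $\psi[a]$ have nonnegative valuation, because $\psi$ has good reduction.
\item The roots $e_\Lambda(\lambda/a)$ with $\lambda\in\Lambda\setminus a\Lambda$ have negative valuation. For this, first replace $\Lambda$ by a suitable $\psi$-isomorphic situation; the natural way is to use a successive-minima basis of $\Lambda$ and the fact that $|\psi_a(z)|=|z|^{q^{r\deg a}}$ for large $|z|$.
\end{itemize}
Rescaling $\varphi$ by a suitable constant $c\in F_\mfp^\times$ then puts its coefficients in $A_\mfp$, with leading reduction of $\tau$-degree exactly $r\deg a$.

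\textbf{From Drinfeld modules to Tate data.} Let $\varphi$ have stable reduction of rank $r$, normalized so that $\varphi$ is over $A_\mfp$ and $\bar\varphi$ has rank $r$. The aim is a unique pair $(e,\psi)$ with $e(z)=z+\sum_{i\ge1}e_i z^{q^i}$ entire and $\psi$ of rank $r$ with good reduction, such that
$$\varphi_a\circ e=e\circ\psi_a \quad\text{for all } a\in A.$$
It suffices to do this for $a=T$. Write $\varphi_T=T+g_1\tau+\cdots+g_{r+d}\tau^{r+d}$, where $g_r$ is a unit and $\nu(g_i)>0$ for $i>r$. Solve for the coefficients of $e$ and $\psi_T=T+h_1\tau+\cdots+h_r\tau^r$ recursively by comparing coefficients of $\tau^n$. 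At each step one must solve a linear equation whose solvability uses $g_r\in A_\mfp^\times$, which is a Hensel/contraction-type argument. From the recursion, prove estimates $\nu(e_i)\to\infty$ linearly in $q^i$, which makes $e$ entire. Then set $\Lambda:=\ker e$. This is an $A$-module under $\psi$ by the functional equation, it is $G_{F_\mfp}$-stable because $e$ has coefficients in $F_\mfp$, and it is discrete because $e$ is entire with isolated zeros. Its rank is $d$, by counting $a$-torsion: $\varphi[a]$ surjects via $e$ from $\psi_a^{-1}(\Lambda)$.

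\textbf{Bijectivity and the main obstacle.} Bijectivity follows from the uniqueness of $(e,\psi)$ given $\varphi$, together with the fact that isomorphisms of Tate data correspond to constants conjugating $e$. I expect the main obstacle to be the analytic step in the converse: constructing $e$ and $\psi$ and proving that $e$ converges on all of $\mathbb{C}_\mfp$, rather than only formally or on a disc. I would first get the recursion for $e_i$ and $h_j$ with coefficients in $A_\mfp$ bounded by $g_r^{-1}$, then bootstrap the growth estimate using the positive valuations of $g_{r+1},\dots,g_{r+d}$.
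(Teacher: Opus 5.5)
The paper gives no proof of this statement. It quotes it from \cite[\S 7]{Dri74} (cf.\ \cite[Theorem 6.2.11]{Pap23}), and your two-way architecture is the one used there. As a sketch, though, it has gaps.

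In the forward direction, the fact you need but never state is $\Lambda\cap\{|z|\le 1\}=\{0\}$. It follows from discreteness, torsion-freeness and good reduction:
\begin{itemize}
\item For $\pi$ a generator of $\mfp$, $\psi_\pi$ satisfies $|\psi_\pi(z)|<|z|$ for $0<|z|<1$. So $\psi_{\pi^n}(\lambda)\to 0$ for any such $\lambda\in\Lambda$, contradicting discreteness.
\item If $|\lambda|=1$, its residue lies in a finite field, hence is $\bar\psi$-torsion. So some $\psi_a(\lambda)\neq 0$ has $|\psi_a(\lambda)|<1$, which the previous case excludes.
\end{itemize}
Consequently $e_\Lambda\in 1+\mfp A_\mfp\{\{\tau\}\}$ is an isometric bijection of the closed unit disc. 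Now take $z$ with $\psi_a(z)\in\Lambda$ and $|e_\Lambda(z)|\le 1$, and write $e_\Lambda(z)=e_\Lambda(w)$ with $|w|\le 1$. Then $\psi_a(w)\in\Lambda\cap\{|z|\le1\}=0$, i.e.\ $z\in\Lambda+\psi[a]$. This is exactly the Newton polygon splitting you want: the $q^{r\deg a}$ roots $e_\Lambda(\psi[a])$ have absolute value $\le 1$ and all others have absolute value $>1$. It gives integrality with $\deg_\tau\bar\varphi_a=r\deg a$, with no rescaling at all.

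The devices you propose do not supply this. An isomorphism of Tate data that keeps $\psi$ integral with unit leading coefficient is given by a unit, so it changes no absolute values. Rescaling $\varphi$ by a non-unit would destroy the normalization $e_0=1$ that your bijectivity argument needs. The same lemma is also what places $e_\Lambda$ in the class where $(e,\psi)$ is unique.

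The converse has two genuine gaps.

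\textbf{The construction of $(e,\psi)$.} Comparing $\tau^n$-coefficients of $\varphi_Te=e\psi_T$ gives $(T^{q^n}-T)e_n=\sum_{j\ge1}g_je_{n-j}^{q^j}-\sum_{k\ge1}e_{n-k}h_k^{q^{n-k}}$. So every choice of $h_1,\dots,h_r$ yields a unique formal solution, and the hypothesis $g_r\in A_\mfp^\times$ is never used. Since $\nu_\mfp(T^{q^n}-T)=1$ whenever $\deg\mfp\mid n$, integrality is a condition on all the $h_k$ simultaneously, which no recursion in $\tau$-degree can enforce. The construction must instead be $\mfp$-adic:
\begin{itemize}
\item Suppose $u_n\in 1+\mfp A_\mfp\{\tau\}$ and $u_n^{-1}\varphi_Tu_n=\psi^{(n)}_T+\pi^n\epsilon$ with $\deg_\tau\psi^{(n)}_T\le r$.
\item Put $u_{n+1}=u_n(1+\pi^n\delta)$. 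Modulo $\mfp^{n+1}$ this reduces to solving $\bar\delta\,\bar\varphi_T-\bar T\,\bar\delta=\bar\epsilon-\bar\eta$ in $\F_\mfp\{\tau\}$, with $\deg_\tau\bar\eta\le r$.
\item This is solved from the top coefficient downward: the coefficient $x_m$ of $\bar\delta$ enters the $\tau^{m+r}$-coefficient linearly as $x_m\bar g_r^{\,q^m}$. That is where $\bar g_r\ne 0$ is used.
\end{itemize}

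\textbf{Entireness of $e$.} The limit lies in $1+\mfp A_\mfp\{\{\tau\}\}$ with coefficients tending to $0$, which gives convergence only on the closed unit disc. Your claim that growth of $\nu(e_i)$ linear in $q^i$ makes $e$ entire is false: $\nu(e_i)\ge cq^i-C$ gives convergence only for $|z|<|\pi|^{-c}$. The missing idea is to use the functional equation together with the expansion of $\psi$ outside the unit disc. Take $R=r\deg a$, so that $\psi_a$ has unit leading coefficient in degree $R$. Comparing $\tau^{n+R}$-coefficients in $e\psi_a=\varphi_ae$ gives $|e_n|\le\sup_{m\ge n+R}|(\varphi_ae)_m|$. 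Hence convergence on a disc of radius $t>1$ propagates to every radius $<t^{q^R}$, and iterating gives an entire function. The positivity of $\nu(g_{r+1}),\dots,\nu(g_{r+d})$ is what should provide the initial $t>1$. Without entireness you cannot even form $\Lambda=\ker e$, since all nonzero lattice points lie outside the unit disc.

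Finally, ``rank $d$ by counting'' also needs finite generation of $\ker e$. For example, lift $\Lambda/T\Lambda$ to finitely many elements and descend using $|\psi_T(z)|=|z|^{q^r}$ for $|z|>1$.
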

	In the proof of the above correspondence, one uses an entire $\F_{q}$-linear function denoted by $e_{\Lambda}(x)$, and defined by
	\begin{align}\label{e_Lambda_product_expression}
		e_{\Lambda}(x) := x \prod_{\substack{\lambda \in \Lambda \\ \lambda \ne 0}} \left(1 - \frac{x}{\lambda}\right) .
	\end{align}
	Here, we recall some important properties of $e_{\Lambda}(x)$:
	\begin{itemize}
		\item  The function $e_{\Lambda}(x)$ satisfies the relation $e_{\Lambda}    (\psi_{T}(x))=\varphi_{T}(e_{\Lambda}(x))$. 
        
		\item  The function $e_{\Lambda}(x)$ has a power series expansion as
		\begin{align}\label{e_Lambda_power_series_expression}
			e_{\Lambda}(x)=u_{0}x+u_{1}x^{q}+u_{2}x^{q^2}+\cdots+u_{n}x^{q^n}+\cdots
		\end{align}
		where $u_{0}=1$, and $u_{n}\in \mfp A_{\mfp}$ for $n\geq 1$ with $\nu_{\mfp}(u_n)\to \infty$ as $n\to \infty$.
	\end{itemize}
    
	For any non-unit $a \in A \setminus \{0\} $, we have the following: 
	\begin{enumerate}
		\item There exists a short exact sequence of $A[G_{F_{\mfp}}]$-modules
		\begin{equation}
			\label{s_e_s_drinfeld_datum}
			0 \longrightarrow \psi[a] = \psi_a^{-1}(0) \longrightarrow \varphi[a] \xrightarrow{\psi_a} \Lambda/a\Lambda \longrightarrow 0.
		\end{equation}
		\item There exists an $A[G_{F_{\mfp}}]$-module isomorphism
		\begin{align}
			\label{iso_drinfeld_datum}
			e_{\Lambda}: \psi_{a}^{-1}(\Lambda)/\Lambda  &\xrightarrow{\sim} \varphi[a]\\
			z+\Lambda &\mapsto e_{\Lambda}(z).\notag
		\end{align}
		\item We have
		\begin{align}\label{expression_of_varphi_mfl_in_terms_of_e_Lambda}
			\varphi_{a}(x)=ax\cdot\prod_{\substack{0\neq \gamma^\prime \in \psi_{a}^{-1}(\Lambda)/\Lambda}}\left(1 - \frac{x}{e_{\Lambda}(\gamma^\prime)} \right).
		\end{align}
	\end{enumerate}
	
	\subsection{Density of subsets of $A^r$}
For $\vec{g}\in \mcW^{r}$, $r\geq 2$,   let $\varphi^{\vec{g}}$ be the Drinfeld $A$-module of rank $r$ 
corresponding to $\vec{g}$, defined by
	$$\varphi^{\vec{g}}_{T}=T+g_{1}\tau+g_{2}\tau^2+\cdots+g_r\tau^r.$$ 
	For any polynomial $g\in A$, define $|g|:=q^{\deg_{T}(g)}$ if $g\neq 0$, and $|g|:=0$ otherwise. Define $|\vec{g}|:=\max\{|g_1|,|g_2|,\ldots,|g_r|\}$. For an integer $N>0$, consider
	$$\mcW^{r}(N):=\{\vec{g}\in\mcW^{r} \;\mid\; |\vec{g}|<q^N\},$$ 
	i.e., the collection of all elements of $\vec{g}\in\mcW^{r}$ such that $\deg_{T}(g_i)<N$ for all $i=1,2,\ldots,r.$ Therefore, $|\mcW^{r}(N)|=q^{(r-1)N}(q^{N}-1)$. 
    
    For any subset $\mcS^{r}\subseteq \mcW^{r}$, consider
	$$\mcS^{r}(N):=\mcS^{r}\cap \mcW^{r}(N)=\{\vec{g}\in\mcS^{r}\;\mid\;|\vec{g}|<q^N\}.$$
	The density of $\mcS^{r}$, denoted by $\mathfrak{d}(\mcS^{r})$, is defined by	
	$$\mathfrak{d}(\mcS^{r}):= \lim_{N \to \infty} \frac{|\mcS^{r}(N)|}{|\mcW^{r}(N)|},$$
    provided the limit exists. The upper density (resp., lower density) of $\mcS^{r}$,
    denoted by $\bar{\mathfrak{d}}(\mcS^{r})$ (resp., $\underline{\mathfrak{d}}(\mcS^{r})$), 
    is defined by
    \begin{align*}
        \bar{\mathfrak{d}}(\mcS^{r}):= \limsup_{N \to \infty}\frac{|\mcS^{r}(N)|}{|\mcW^{r}(N)|},  \left( \mathrm{resp.,}\quad  \underline{\mathfrak{d}}(\mcS^{r}):= \liminf_{N \to \infty}\frac{|\mcS^{r}(N)|}{|\mcW^{r}(N)|}\right).
    \end{align*}
	In contrast to the density $\mathfrak{d}(\mcS^{r})$, which need not converge, the quantities $\bar{\mathfrak{d}}(\mcS^{r})$ and $\underline{\mathfrak{d}}(\mcS^{r})$ always exist. 

	\section{Drinfeld $A$-modules of rank $3$ with surjective $(T)$-adic Galois representations}
    For $\vec g=(g_1,g_2,g_3)\in \mcW^{3}$, let $\varphi^{\vec g}_T = T+g_1\tau+g_2\tau^2+g_3\tau^3$ be the corresponding Drinfeld $A$-modules of rank $3$.
    In this section, we provide explicit sufficient conditions on the coefficients of $\varphi^{\vec g}_T$ under which the associated $(T)$-adic representation $\rho_{\varphi^{\vec g},(T)}$ is surjective. We show such Drinfeld $A$-modules have density $\big(1-\frac{1}{q}\big)^3.$
    
    Let $\mcS^{3}$ denotes the subset of $\mcW^{3}$ consisting of $\vec{g}=(g_1,g_2,g_3)$ such that
\begin{itemize}
\item $\nu_T(g_i)=0$ for $i=1,2,3$, and
\item there exists $\mfl\in\Omega_A\setminus (T)$ such that $\nu_{\mfl}(g_2)=0$ and $p\nmid\nu_{\mfl}(g_3)$.
\end{itemize}
The main theorem of this section is as follows.
\begin{thm}
   \label{T_adic_sur_rank_3}
   Let $q> 9$ be odd. For $\vec{g}\in \mcS^{3}$, let $\varphi^{\vec{g}}$ be the  Drinfeld $A$-module defined by $ \varphi^{\vec{g}}_{T}=T+g_1\tau+g_2\tau^2+g_3\tau^3.$ Then, the $(T)$-adic Galois representation  ${\rho}_{\varphi^{\vec{g}},(T)}:G_{F}\longrightarrow \GL_{3}(A_{(T)})$     is surjective.
       \end{thm}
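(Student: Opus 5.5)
Our plan has two stages. First we prove the residual representation $\bar\rho_{\varphi^{\vec g},(T)}:G_F\to\GL_3(\F_q)$ is surjective. Then we lift this to the full $(T)$-adic image by a group-theoretic argument in $\GL_3(A_{(T)})$.

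\textbf{Step 1: inertia at $(T)$.} Since $\nu_T(g_i)=0$ for all $i$, $\varphi^{\vec g}$ has good reduction at $(T)$. The reduction $\bar\varphi_T=\bar g_1\tau+\bar g_2\tau^2+\bar g_3\tau^3$ has $\bar g_1\neq 0$, so its height is $1$. The formal module of $\varphi$ at $(T)$ therefore has height $1$: its $T$-torsion is a one-dimensional $\F_q$-space. This space is the set of roots of a Newton-polygon piece of $Tx+g_1x^q$, namely roots of $x^{q-1}=-T/g_1$, a tamely totally ramified extension of degree $q-1$. Hence the inertia group $I_{(T)}$ acts on this line through the full $\F_q^\times$ (a fundamental character of level $1$). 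It acts unipotently, in fact trivially, on the étale quotient of dimension $2$. Working higher, the $T^n$-torsion of the height-one formal part gives a totally ramified character $I_{(T)}\to A_{(T)}^\times$ that is surjective, as in Lubin--Tate theory for the Carlitz-like formal group. So $\det\rho_{\varphi,(T)}$ (restricted to inertia) is surjective onto $A_{(T)}^\times$, and $\det$ of the image is all of $A_{(T)}^\times$.

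\textbf{Step 2: inertia at $\mfl$.} At $\mfl$ we have $\nu_\mfl(g_2)=0$ and $\nu_\mfl(g_3)=m>0$ with $p\nmid m$. The Newton polygon of $\varphi_T$ then shows stable reduction of rank $2$. By Theorem~\ref{Drinfeld}, $\varphi_\mfl$ corresponds to a Tate datum $(\psi,\Lambda)$ with $\psi$ of rank $2$ with good reduction and $\Lambda$ of rank $1$. Using \eqref{s_e_s_drinfeld_datum} with $a=T$, inertia at $\mfl$ acts on $\varphi[T]$ through matrices of the form $\psmat{I_2}{*}{0}{1}$ (blockwise). The valuation of the lattice generator $\lambda$ relates to $m$, and a Tate-curve-style computation with $\psi_T^{-1}(\lambda)$ and \eqref{iso_drinfeld_datum} shows the extension $F_\mfl(\psi_T^{-1}(\lambda))$ is wildly ramified of degree divisible by $p$. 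This uses $p\nmid m$ through an Artin–Schreier-type equation whose conductor is prime to $p$. Consequently the image of $I_\mfl$ contains a nontrivial transvection in $\GL_3(\F_q)$, and in fact an $\F_q$-line's worth of transvections, possibly only after ramification considerations.

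\textbf{Step 3: residual surjectivity.} Let $G=\bar\rho(G_F)\subseteq\GL_3(\F_q)$. From Step 1, $G$ contains elements $\mathrm{diag}(\zeta,1,1)$ for a generator $\zeta$ of $\F_q^\times$, up to conjugacy, and $\det G=\F_q^\times$. From Step 2, $G$ contains a transvection. We next show $G$ acts irreducibly. A $G$-stable line or plane would give a character of $G_F$ unramified outside $(T),\infty$ with restricted local behaviour. Since $F=\F_q(T)$ has no nontrivial such extensions beyond constants and Carlitz-type ones, we would get a contradiction by comparing with the inertial types at $(T)$ and at $\infty$, together with $\varphi[T]$ being indecomposable at $\mfl$. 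An irreducible subgroup generated by transvections is then handled by McLaughlin's classification. For $q$ odd and $q>9$, which excludes small exceptional cases such as $\SL_3(\F_2)$-type, $\SU_3$ and subfield groups $\SL_3(\F_{q_0})$, the normal subgroup generated by transvections is $\SL_3(\F_q)$. Subfield subgroups are excluded because $\det$ and the diagonal element with eigenvalue a generator $\zeta$ force the full field. Hence $G\supseteq\SL_3(\F_q)$ and $\det G=\F_q^\times$, so $G=\GL_3(\F_q)$.

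\textbf{Step 4: lifting.} We use the standard lemma: for $r\ge 2$ and $q$ large (with $(r,q)$ avoiding small exceptions such as $q\le 3$), a closed subgroup $H\subseteq\SL_r(A_{(T)})$ surjecting onto $\SL_r(\F_q)$ equals $\SL_r(A_{(T)})$. This follows from Pink–Rütsche's or Serre's lifting argument, noting that the commutator and $p$-th-power maps on $I+T^n M_r$ give the higher layers. Combined with the surjectivity of $\det$ from Step 1, we conclude $\rho_{\varphi^{\vec g},(T)}(G_F)=\GL_3(A_{(T)})$.

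\textbf{Main obstacle.} We expect the main obstacle to be Step 3's irreducibility together with the precise transvection in Step 2. The natural first attempt is to use the fact that $\Lambda$ having valuation prime to $p$ makes the $\mfl$-inertia act through a nontrivial unipotent subgroup. Excluding reducible $G$ would then use both the height-one filtration at $(T)$ and the unipotent $\mfl$-structure. In particular, a $G$-stable line would have to be the formal line at $(T)$ and also lie in the $\mfl$-invariant plane. The resulting character would be unramified outside $(T)$ with ramification $\F_q^\times$, i.e.\ a Carlitz-type character. We would try to rule this out with a Frobenius trace comparison, or by showing a $2$-dimensional complement inherits no compatible structure.
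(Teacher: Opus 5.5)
Your proposal has a fatal gap at exactly the point you flagged, and a separate error in the lifting step. Step~3 never proves irreducibility, and in fact irreducibility fails for some $\vec g\in\mcS^3$, so the statement cannot be proved from these hypotheses. Your heuristic that a $G_F$-stable line must be the formal line at $(T)$ is unfounded: a stable line can meet the connected part trivially and map isomorphically onto the \'etale quotient. Your class-field-theory argument also explicitly leaves constant-field characters alive, and these do occur. For $q>9$ odd take $\vec g=(T+2,\,T+1,\,T-1)$ and $\mfl=(T-1)$. Then $\nu_T(g_i)=0$, $\nu_\mfl(g_2)=0$ and $\nu_\mfl(g_3)=1$, so $\vec g\in\mcS^3$, but
\[
\varphi^{\vec g}_T=T+(T+2)\tau+(T+1)\tau^2+(T-1)\tau^3=\bigl(T+2\tau+(T-1)\tau^2\bigr)(1+\tau).
\]
Hence $\{x : x^q=-x\}\subset\F_{q^2}$ is a $G_F$-stable $\F_q$-line in $\varphi^{\vec g}[T]$, on which $G_F$ acts through the quadratic character of $\F_{q^2}F/F$. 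So $\bar\rho_{\varphi^{\vec g},(T)}$ is reducible and $\rho_{\varphi^{\vec g},(T)}$ is not surjective. The paper's own irreducibility step (Eisenstein at $1/T$ on the reciprocal of $\varphi_T(x)/x$) does not apply either: the constant term $g_3/T$ has $\infty$-adic valuation $1-\deg g_3\le 0$ because $\mfl\mid g_3$. Some extra hypothesis forcing $\varphi^{\vec g}[T]$ to be irreducible is needed.

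Independently, Step~4 is false in equal characteristic. The constant subgroup $\SL_3(\F_q)\subset\SL_3(\F_q[[T]])$ is closed, surjects onto $\SL_3(\F_q)$, and is proper. The reduction map splits, and $(1+T^nX)^p=1+T^{np}X^p$, so $p$-th powers do not fill successive congruence layers as in Serre's $\Z_p$-argument. This is why the paper uses Pink--R\"utsche's criterion, which also demands a non-scalar element of $M \bmod T^2$ congruent to $1$ mod $T$. The paper produces it by running the $\mfl$-inertia computation on $\varphi[T^2]$; the Tate-datum argument works for any modulus prime to $\mfl$. Likewise your Step~2 yields only ``some transvections''. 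The paper's computation $\nu'_\mfl(z)=-\nu_\mfl(g_3)/(q^2(q-1))$, combined with $p\nmid\nu_\mfl(g_3)$, gives $q^2$ dividing the ramification index. Hence the whole unipotent radical $\{\csmat{1}{0}{*}{0}{1}{*}{0}{0}{1}\}$ lies in $\bar\rho(I_\mfl)$, both mod $T$ and mod $T^2$. The resulting divisibility $q^2\mid|\overline M|$ is what the paper feeds into its Aschbacher case analysis, in place of your McLaughlin argument.
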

       The proof of Theorem~\ref{T_adic_sur_rank_3} is based on a proposition by Pink and R\"utsche (cf. \cite[Proposition 4.1]{PR09}), which we recall now. 
       \begin{prop}
       \label{Pink_R_cond_T_adic_sur}
       Let $ r \in \N,\ \mfp\in \Omega_A$. Let $M$ be a closed subgroup of $\GL_{r}(A_{\mfp})$ such that $\det(M)=A_{\mfp}^{\times}$. Assume that
		$|\F_{\mfp}|\geq 4$. Suppose $M\equiv\GL_{r}(\F_{\mfp})\pmod \mfp$, and $M$ mod $\mfp^2$ contains a non-scalar matrix which is congruent to the identity modulo $\mfp$. Then, $M=\GL_{r}(A_{\mfp})$.
	\end{prop}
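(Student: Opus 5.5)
The plan is to work with the congruence filtration of $\GL_r(A_\mfp)$. Write $K_i:=\ker\bigl(\GL_r(A_\mfp/\mfp^{i+1})\to\GL_r(A_\mfp/\mfp^{i})\bigr)$ for $i\ge 1$. The map $1+\mfp^iX\mapsto X \bmod \mfp$ identifies $K_i$ with the additive group $M_r(\F_\mfp)$. Under this identification, conjugation by $\GL_r(A_\mfp/\mfp^{i+1})$ acts on $K_i$ through its reduction mod $\mfp$, i.e. through the adjoint action of $\GL_r(\F_\mfp)$. Let $M_i$ be the image of $M$ in $\GL_r(A_\mfp/\mfp^{i+1})$ and set $V_i:=M_i\cap K_i\subseteq M_r(\F_\mfp)$. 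Each $V_i$ is an additive subgroup, and it is stable under $\mathrm{Ad}\,\GL_r(\F_\mfp)$ because $M\equiv \GL_r(\F_\mfp)\pmod{\mfp}$. Since $M$ is closed, hence profinite, it suffices to show $V_i=M_r(\F_\mfp)$ for every $i\ge1$. Then, by induction on $i$, $M_i=\GL_r(A_\mfp/\mfp^{i+1})$ for all $i$, and taking the inverse limit gives $M=\GL_r(A_\mfp)$. (For $r=1$ the statement is just $\det(M)=A_\mfp^\times$, so assume $r\ge2$.)

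\emph{Step 1 (level one).} By hypothesis $V_1$ contains a non-scalar matrix $X$. I would show that any $\mathrm{Ad}$-stable additive subgroup of $M_r(\F_\mfp)$ containing a non-scalar element contains $\mathfrak{sl}_r(\F_\mfp)$, and this is where $|\F_\mfp|\ge4$ enters. First conjugate $X$ by permutation and elementary matrices so that some off-diagonal entry is nonzero. Then conjugate by diagonal matrices $\mathrm{diag}(t,1,\dots,1)$ and take differences, so that the factors $t$ and $t^{-1}$ separate the entries of the first row and column; this isolates a nonzero multiple of a single $E_{ij}$ with $i\ne j$. Having $|\F_\mfp^\times|\ge3$ is what allows the characters $t,t^{-1},1$ to be distinguished, and also lets the additive span of the $t^2c$ be all of $\F_\mfp$. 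This yields $\F_\mfp E_{ij}\subseteq V_1$, hence all root spaces by permutation conjugation, and then, via $\mathrm{Ad}(1+E_{ji})E_{ij}$, also the diagonal trace-zero matrices. So $\mathfrak{sl}_r(\F_\mfp)\subseteq V_1$.

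\emph{Step 2 (climbing levels).} For $x=1+\mfp^iX\in M$ and $y=1+\mfp Y\in M$ we have the commutator identity
$$[x,y]\equiv 1+\mfp^{i+1}[X,Y]\pmod{\mfp^{i+2}},$$
so $V_{i+1}\supseteq[V_i,V_1]$. Since $[\mathfrak{sl}_r,\mathfrak{sl}_r]=\mathfrak{sl}_r$ for $|\F_\mfp|\ge4$, induction gives $\mathfrak{sl}_r(\F_\mfp)\subseteq V_i$ for all $i$. Consequently $M$ contains the full congruence kernel of $\SL_r(A_\mfp)$, after one more standard limit argument using closedness.

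\emph{Step 3 (the trace direction), which I expect to be the main obstacle.} It remains to show that each $V_i$ contains an element of nonzero trace. On $K_i$ the determinant corresponds to the trace, so this is exactly the statement that $M$ reaches all determinants $\equiv1\pmod{\mfp^i}$ inside the congruence kernel. Given $u\in 1+\mfp^iA_\mfp$, choose $m\in M$ with $\det m=u$; its reduction $\bar m$ lies in $\SL_r(\F_\mfp)$. I would then produce $m'\in M\cap\SL_r(A_\mfp)$ with the same reduction $\bar m$, and take $m m'^{-1}\in M\cap\ker(\bmod\ \mfp)$, whose determinant is $u$. To get $m'$, I would use that $\SL_r(\F_\mfp)$ is perfect for $|\F_\mfp|\ge4$, so $\bar m$ is a product of commutators of elements of $\GL_r(\F_\mfp)$. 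Lifting these elements to $M$ and forming the same commutators gives an element of $M$ of determinant $1$ reducing to $\bar m$. Granting this, the determinant map from $M\cap\ker(\bmod\ \mfp)$ onto $1+\mfp A_\mfp$ is surjective. Combined with Step 2, this gives $V_i=M_r(\F_\mfp)$ for all $i$, including the delicate case $p\mid r$ where the scalars lie inside $\mathfrak{sl}_r$. This completes the proof.
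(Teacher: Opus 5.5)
The paper gives no proof of this statement; it quotes it from \cite[Proposition 4.1]{PR09}, so your argument has to stand on its own. Your overall strategy is the natural one: the congruence filtration, $\mathrm{Ad}$-stable subgroups $V_i$, commutators to climb levels, and the determinant handled via $[\GL_r(\F_\mfp),\GL_r(\F_\mfp)]=\SL_r(\F_\mfp)$. The determinant argument in Step 3 is correct.

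\textbf{The gap is in Step 2.} It rests on the claim that $[\mathfrak{sl}_r,\mathfrak{sl}_r]=\mathfrak{sl}_r$ whenever $|\F_\mfp|\ge 4$, and this is false for $r=2$, $p=2$. In characteristic $2$ one has $\mathfrak{sl}_2=\F_\mfp I\oplus\F_\mfp E_{12}\oplus\F_\mfp E_{21}$, and
$[aI+bE_{12}+cE_{21},\,a'I+b'E_{12}+c'E_{21}]=(bc'+b'c)I$, so $[\mathfrak{sl}_2,\mathfrak{sl}_2]=\F_\mfp I$. Perfectness of $\mathfrak{sl}_r$ is governed by $(r,p)\neq(2,2)$, not by the size of $\F_\mfp$.

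Consequently, knowing only $\mathfrak{sl}_2\subseteq V_i$ and $\mathfrak{sl}_2\subseteq V_1$, your commutators produce nothing but scalars at level $i+1$. Step 1 cannot be restarted there, because it needs a non-scalar element. This case ($r=2$, $|\F_\mfp|=4,8,16,\dots$) lies within the hypotheses, and it is exactly what the paper uses for Theorem~\ref{T_adic_sur_rank_2} with $q$ even.

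The repair uses your own Step 3, which does not depend on Steps 1--2. Apply it first at level one. For $u=1+\pi c$ (with $\pi$ a uniformizer) it produces an element of $M$ congruent to $1+\pi X$ with $\operatorname{tr}X=c$, so $V_1=M_r(\F_\mfp)$. Then climb using $[V_i,V_1]\supseteq[\mathfrak{sl}_r,\mathfrak{gl}_r]=\mathfrak{sl}_r$, which holds in every characteristic; for example, $[cE_{ij},E_{jj}]=cE_{ij}$ and $[E_{ij},cE_{ji}]=c(E_{ii}-E_{jj})$ for $i\neq j$.

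\textbf{Two smaller points.}

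First, in Step 1 you say $|\F_\mfp^\times|\ge 3$ lets you distinguish the characters $t,t^{-1},1$. That holds for $\F_\mfp$-linear combinations, but $V_1$ is only an additive group, so you may take only $\F_p$-combinations. For $|\F_\mfp|=4$ the characters $t$ and $t^{-1}=t^2$ are Frobenius-conjugate. Concretely, for $X=E_{12}+E_{21}$ over $\F_4$, the additive span of the conjugates $\mathrm{Ad}(\mathrm{diag}(t,1))X$ is $\{aE_{12}+a^2E_{21}: a\in\F_4\}$, which contains no nonzero root vector. You need a unipotent conjugation as well: in characteristic $2$, $\mathrm{Ad}(1+sE_{12})X-X=sI+s^2E_{12}$, and a torus difference of this gives $(t-1)s^2E_{12}$.

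Second, the commutators you form lie in $N:=M\cap\SL_r(A_\mfp)$. The conclusion that $M$ contains the congruence kernel of $\SL_r(A_\mfp)$ must therefore come from the filtration of $N$, not of $M$. This works: Step 1 only uses differences $\mathrm{Ad}(\bar g)Y-Y$, which are leading terms of commutators, so the base case holds for $N$ as well. Knowing only $\mathfrak{sl}_r\subseteq V_i(M)$ would not let you push the elements of prescribed determinant from Step 3 down to level $i$.

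Once $N$ contains $\ker(\SL_r(A_\mfp)\to\SL_r(\F_\mfp))$, the proof closes as follows. Take $m\in M$ with $m\equiv 1\pmod{\mfp}$ and $\det m=u$. Then $m\,\mathrm{diag}(u^{-1},1,\dots,1)$ lies in that kernel, so $\mathrm{diag}(u,1,\dots,1)\in M$ for every $u\in 1+\mfp A_\mfp$.
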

    To prove Theorem~\ref{T_adic_sur_rank_3}, it is enough to show that $M:=\Ima(\rho_{\varphi^{\vec{g}},(T)})$ satisfies the hypotheses of Proposition~\ref{Pink_R_cond_T_adic_sur}. As a first step, we establish the following.
    \begin{prop}\label{det_of_T_adic_rep_sujective_3}
           For $\vec{g} \in \mcS^3$, the determinant map
           $$\det\rho_{\varphi^{\vec{g}},(T)}:G_{F}\ra A_{(T)}^{\times}$$
           is surjective, i.e., $\det(M)=A_{(T)}^{\times}$.
    \end{prop}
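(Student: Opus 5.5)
The plan is to reduce the statement to a rank-$1$ computation through the determinant of the Tate module, and then to use local class field theory (Lubin--Tate theory) at the place $(T)$. Write $\varphi=\varphi^{\vec g}$.

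First, I will invoke the known description of the determinant of a Drinfeld module, due to Anderson and van der Heiden via exterior powers of $t$-motives. For a Drinfeld $A$-module $\varphi$ of rank $r$ with $\varphi_T=T+g_1\tau+\cdots+g_r\tau^r$, the $r$-th exterior power of its $\mfp$-adic Tate module is isomorphic, as an $A_\mfp[G_F]$-module, to the $\mfp$-adic Tate module of the rank-$1$ Drinfeld module
$$\psi_T=T+(-1)^{r-1}g_r\tau .$$
Here $r=3$, so $\psi_T=T+g_3\tau$. Consequently $\det\rho_{\varphi,(T)}=\rho_{\psi,(T)}$ as characters $G_F\to A_{(T)}^\times$. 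It therefore suffices to show that $\rho_{\psi,(T)}$ is surjective.

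Second, I will restrict $\rho_{\psi,(T)}$ to an inertia group $I_T\subseteq G_{F_{(T)}}\subseteq G_F$ at $(T)$. Since $\nu_T(g_3)=0$, the coefficient $g_3$ is a unit in $A_{(T)}$. Hence $\psi$ is a Drinfeld $A_{(T)}$-module of rank $1$ with good reduction at $(T)$, and its reduction $\bar\psi_T=\bar g_3\tau$ has height $1$. The $(T)$-power torsion of $\psi$ over $F_{(T)}$ is then the torsion of a one-dimensional formal $A_{(T)}$-module of height $1$. This module is a Lubin--Tate formal group for the uniformizer $T$ of $F_{(T)}=\F_q((T))$, because $\psi_T(x)\equiv Tx \pmod{\deg 2}$ and $\psi_T(x)\equiv \bar g_3x^q \pmod T$. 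More precisely, it is Lubin--Tate for $T$ up to the unramified twist by $\bar g_3$.

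To avoid worrying about that twist, I will argue directly. Over the unramified extension $F_{(T)}(\beta)$ with $\beta^{q-1}=g_3$, which is unramified because $q-1$ is prime to $p$ and $g_3$ is a unit, the map $x\mapsto \beta x$ carries $\psi$ to the Carlitz module $C_T=T+\tau$. So $\rho_{\psi,(T)}$ and $\rho_{C,(T)}$ agree on $I_T$, up to the character of $F(\beta)/F$, and that character is trivial on $I_T$.

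Third, I will use the standard fact that $F_{(T)}(C[T^n])/F_{(T)}$ is totally ramified with Galois group $(A/T^n)^\times$. This holds because $C_{T^n}(x)/C_{T^{n-1}}(x)$ is Eisenstein at $T$, its constant term being $T$ and its other coefficients divisible by $T$. It follows that $\rho_{C,(T)}(I_T)=A_{(T)}^\times$, hence $\rho_{\psi,(T)}(I_T)=A_{(T)}^\times$, and so $\det(M)=A_{(T)}^\times$.

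I expect the main obstacle to be the first step: pinning down the exact sign and shape of the rank-$1$ determinant module, together with the compatibility between $G_F$-actions. Fortunately, the argument is insensitive to the sign, since $-g_3$ is also a $T$-adic unit. Note also that this proposition uses only the condition $\nu_T(g_3)=0$; the condition at $\mfl$ in the definition of $\mcS^3$ is not needed here and is presumably used later for the residual surjectivity $M\equiv \GL_3(\F_q)\pmod T$.
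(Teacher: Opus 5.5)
Your proof is correct, but it takes a different route from the paper. The paper gives no argument of its own. It observes that $\nu_T(g_i)=0$ gives good reduction at $(T)$ with reduction height $1$, and then cites \cite[Proposition 2.3]{Ray24a}.

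You instead argue in three steps:
\begin{enumerate}
\item You identify $\det\rho_{\varphi^{\vec g},(T)}$ with $\rho_{\psi,(T)}$ for $\psi_T=T+g_3\tau$, using the Anderson--van der Heiden determinant theorem. The sign $(-1)^{r-1}$ is indeed $+1$ for $r=3$.
\item You untwist $\psi$ to the Carlitz module over the unramified extension $F_{(T)}(\beta)$, where $\beta^{q-1}=g_3$. Your identity $C_T(\beta x)=\beta\psi_T(x)$ checks out, and the discrepancy character $\sigma\mapsto\sigma(\beta)/\beta\in\F_q^\times$ is trivial on inertia at $(T)$.
\item You conclude using the Eisenstein polynomials $C_{T^n}(x)/C_{T^{n-1}}(x)=T+C_{T^{n-1}}(x)^{q-1}$.
\end{enumerate}

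Your version is self-contained apart from the determinant theorem. It gives surjectivity already on the inertia group at $(T)$. It also shows that this step needs only $\nu_T(g_3)=0$. The height-one condition coming from $\nu_T(g_1)=0$, which the paper feeds into the cited proposition, plays no role for the determinant.

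The paper's route buys brevity and a formulation purely in terms of the reduction type at $(T)$. It reuses this verbatim for rank $2$. Your argument transfers just as easily there, with $\psi_T=T-g_2\tau$.
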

    \begin{proof}
Since $\vec{g} \in \mcS^3$, we get $\varphi^{\vec{g}}$ has a good reduction at $(T)$ and the reduction height $H_{(T)}(\overline{\varphi}^{\vec{g}})$ of $\varphi^{\vec{g}}$ at $(T)$ is $1$. 
By~\cite[Proposition 2.3]{Ray24a}, the claim follows.
    \end{proof}
    Since $\vec{g}\in \mcS^{3}$, there exists $\mfl\in\Omega_A\setminus (T)$ such that $\nu_{\mfl}(g_2)=0$ and $p\nmid\nu_{\mfl}(g_3)$. To verify that $M$ satisfies the remaining hypotheses of Proposition~\ref{Pink_R_cond_T_adic_sur}, we first try to analyze the subgroup $\bar{\rho}_{\varphi^{\vec{g}}, (T)}(I_{\mfl})$ of $\overline{M}:=\Ima(\bar{\rho}_{\varphi^{\vec{g}}, (T)})\subseteq\GL_{3}(A/(T))$, where $I_{\mfl}$ denotes the inertia subgroup of $G_F$ at $\mfl$. 
    
\begin{prop}
\label{rank_3_A_nice_subgroup_of_bar_rho_varphi_vec_g_T_of_inertia_at_mfl}
For $\vec{g} \in \mcS^3$, there is a basis of $\varphi^{\vec{g}}[T]$ such that 
        $$\left\{\begin{pmatrix}
			1 & 0 & b_{\sigma,1}\\
			0 & 1 & b_{\sigma,2}\\
			0 & 0 & 1 \end{pmatrix} \; \middle| \; b_{\sigma,1}, b_{\sigma,2}\in A/(T)\right\}\subseteq \bar{\rho}_{\varphi^{\vec{g}},(T)}(I_{\mfl}).$$
        In particular, $|A/(T)|^2=q^{2}$ divides $|\overline{M}|$.
\end{prop}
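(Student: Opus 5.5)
We will use the Drinfeld--Tate uniformization (Theorem~\ref{Drinfeld}) at the prime $\mfl$. First we determine the reduction type of $\varphi:=\varphi^{\vec g}$ at $\mfl$. Since $\nu_{\mfl}(g_2)=0$ and $\nu_{\mfl}(g_3)=m$ with $p\nmid m$, consider the Newton polygon of $\varphi_T$ at $\mfl$. The points are $(q^i,\nu_{\mfl}(g_i))$ for $i=0,1,2,3$, with $g_0=T$ a unit at $\mfl$.

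If $m>0$, then $\varphi$ does not have good reduction at $\mfl$. It has stable reduction of rank $2$ after no extension, because $\nu_{\mfl}(g_2)=0$ and the coefficients $g_1,g_2$ are integral. Thus $\varphi_{\mfl}$ corresponds to a Tate datum $(\psi,\Lambda)$, where $\psi$ is a Drinfeld $A_{\mfl}$-module of rank $2$ with good reduction and $\Lambda$ is a $\psi$-lattice of rank $1$. (If $m<0$, we first rescale $\varphi$ by an element of $\bar F_{\mfl}$ to make it integral; the same analysis works, but the scaling introduces a tamely or wildly ramified extension. So the key case to arrange is that, after an $F_{\mfl}$-isomorphism, the leading coefficient has valuation prime to $p$ and the situation is as above. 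We will treat $m>0$ and reduce $m<0$ to it by a twist $x\mapsto cx$ with $c\in F_{\mfl}$.)

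Next we apply the exact sequence \eqref{s_e_s_drinfeld_datum} with $a=T$:
\[
0\to\psi[T]\to\varphi[T]\to\Lambda/T\Lambda\to 0.
\]
Since $\psi$ has good reduction at $\mfl$ and $T\notin\mfl$, $\psi[T]$ is unramified, so $I_{\mfl}$ acts trivially on it. Also $I_{\mfl}$ acts trivially on $\Lambda/T\Lambda$, since $\Lambda$ is generated by a single element $\lambda$ and $G_{F_{\mfl}}$ acts on the rank-$1$ lattice through $\Aut(\Lambda)=\F_q^\times$. Choose a basis $e_1,e_2$ of $\psi[T]$ and lift $\lambda$ to $e_3\in\varphi[T]$ via \eqref{iso_drinfeld_datum}: take $z$ with $\psi_T(z)=\lambda$ and set $e_3=e_\Lambda(z)$. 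With this basis, inertia acts by matrices of the displayed unipotent shape, with $(b_{\sigma,1},b_{\sigma,2})$ given by $\sigma(z)-z\in\psi[T]$. This is the Kummer-type cocycle of the Tate datum.

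The main obstacle is showing that $\sigma\mapsto\sigma(z)-z$ maps $I_{\mfl}$ onto all of $\psi[T]\cong\F_q^2$. The approach is as follows. The valuation of $\lambda$ is governed by the Newton polygon: $\nu_{\mfl}(\lambda)=-m/(q^3-q^2)$ up to normalization, and the segment joining $(q^2,0)$ and $(q^3,m)$ gives the roots of $\varphi_T$ of negative valuation. Solving $\psi_T(z)=\lambda$, the $q^2$ solutions $z$ all have valuation $\nu(\lambda)/q^2$, and their differences lie in $\psi[T]$, whose elements are units. Since $p\nmid m$ and $\gcd(m,q^3-q^2)$ has its $p$-part trivial, the extension $F_{\mfl}(z)/F_{\mfl}^{\mathrm{un}}$ has ramification forcing the orbit of $z$ under inertia to have size $q^2$. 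Concretely, we will show that $F^{\un}_{\mfl}(z)$ has degree $q^2$ over $F^{\un}_{\mfl}$ by an Eisenstein/Artin--Schreier argument: substituting $z=\pi^{-k}w$, the reduction of the equation becomes $\bar\psi_T(w)$ plus a term forcing wild ramification, because the $p$-adic divisibility of the denominator does not cancel thanks to $p\nmid m$. Then the image of inertia in the affine group acting on $\psi_T^{-1}(\lambda)$ is a subgroup of translations by $\psi[T]$ acting transitively, which is all of $\F_q^2$. Hence the full group of displayed matrices lies in $\bar\rho_{\varphi,(T)}(I_{\mfl})$, and $q^2\mid|\overline M|$ follows by Lagrange.
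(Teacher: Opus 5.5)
\textbf{There is a genuine gap} where you assert that $I_{\mfl}$ acts trivially on $\Lambda/T\Lambda$. That $G_{F_{\mfl}}$ acts on $\Lambda=A\lambda$ through a character $\chi_\Lambda$ with values in $\Aut(\Lambda)=\F_q^\times$ gives no reason for $\chi_\Lambda$ to be unramified, and typically it is not.

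Put $m=\nu_{\mfl}(g_3)$. Note that $m\geq 1$ automatically, since $g_3\in A$ and $p\nmid m$, so your $m<0$ digression is moot. The segment from $(q^2,0)$ to $(q^3,m)$ gives the valuation $-m/(q^2(q-1))$ of the torsion points $e_\Lambda(a_1w_1+a_2w_2+bz)$ with $b\neq 0$. Via $\nu(e_\Lambda(z))=\nu(z)$, this is the valuation of $z$, not of $\lambda$. Since $\psi$ has good reduction, $\nu(\lambda)=\nu(\psi_T(z))=q^2\nu(z)=-m/(q-1)$. This is non-integral whenever $(q-1)\nmid m$ (already for $m=1$), so $\lambda\notin F_{\mfl}^{\un}$ and $\chi_\Lambda|_{I_{\mfl}}\neq 1$. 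Your own numbers are also inconsistent: a $\lambda$ fixed by $I_{\mfl}$ would have integral valuation, yet you assign it $-m/(q^3-q^2)$.

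Consequently, as in Lemma~\ref{Im_of_inertia_l_noteq_T}, inertia acts by matrices $\psmat{I_2}{*}{0}{\chi_\Lambda(\sigma)}$, not by unipotent ones. Three of your steps fail as a result:
\begin{enumerate}
\item Only $\sigma(z)-\chi_\Lambda(\sigma)z$ lies in $\psi[T]$, so $\sigma\mapsto\sigma(z)-z$ is not a map $I_{\mfl}\to\psi[T]$.
\item Inertia does not even preserve $\psi_T^{-1}(\lambda)$, so it cannot act on it by translations.
\item The claimed equality $[F_{\mfl}^{\un}(z):F_{\mfl}^{\un}]=q^2$ is false, because $F_{\mfl}^{\un}(z)$ contains $F_{\mfl}^{\un}(\lambda)$, which is ramified of degree $(q-1)/\gcd(m,q-1)$.
\end{enumerate}

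What is missing is a way to extract the full unipotent subgroup from an image known only to lie in $\left\{\psmat{I_2}{*}{0}{c}: c\in\F_q^\times\right\}$, a group of order $q^2(q-1)$. The paper uses a Sylow argument. The valuation $-m/(q^2(q-1))$ of $e_\Lambda(z)\in\varphi[T]$, with $p\nmid m$, forces $q^2$ to divide the ramification index of $F_{\mfl}^{\un}(\varphi[T])/F_{\mfl}^{\un}$, i.e.\ $q^2\mid|\bar{\rho}_{\varphi^{\vec g},(T)}(I_{\mfl})|$. The only subgroup of order $q^2$ in the ambient group is its normal Sylow $p$-subgroup, which consists exactly of the unipotent matrices.

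Alternatively, you can rescue your translation argument by working over $K=F_{\mfl}^{\un}(\lambda)$, i.e.\ restricting to $\ker(\chi_\Lambda|_{I_{\mfl}})$, whose index divides $q-1$. There $\sigma\mapsto\sigma(z)-z$ is an injective homomorphism $\Gal(K(z)/K)\to\psi[T]$. Since $e(K/F_{\mfl}^{\un})$ is prime to $p$, the corrected valuation of $z$ forces $q^2\mid e(K(z)/K)$, so this map is onto.

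Incidentally, reading the valuation of the torsion points directly off the Newton polygon, as you began to do, is a shorter route to $q^2\mid|\bar{\rho}_{\varphi^{\vec g},(T)}(I_{\mfl})|$ than the paper's leading-coefficient computation.
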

	To prove Proposition~\ref{rank_3_A_nice_subgroup_of_bar_rho_varphi_vec_g_T_of_inertia_at_mfl}, we first establish the following Lemma.
	\begin{lem}\label{Im_of_inertia_l_noteq_T}
		For $\vec{g} \in \mcS^3$, there is a basis of $\varphi^{\vec{g}}[T]$ such that $\bar{\rho}_{\varphi^{\vec{g}},(T)}(I_{\mfl})$ is contained in the set $\left\{ \psmat{I_{2}}{*}{0}{c} : c\in  \F_{q}^{\times}\right\}$, $I_{2}$ denotes a $2 \times 2$ identity matrix.
	\end{lem}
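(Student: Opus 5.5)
We will work locally at $\mfl$ and use the Drinfeld--Tate uniformization (Theorem~\ref{Drinfeld}). The first step is to read off the stable reduction type of $\varphi:=\varphi^{\vec g}$ at $\mfl$ from the Newton polygon of $\varphi_T(x)=Tx+g_1x^q+g_2x^{q^2}+g_3x^{q^3}$ over $F_\mfl$. Since $\mfl\neq (T)$, we have $\nu_\mfl(T)=0$. Since $\nu_\mfl(g_2)=0$ and $p\nmid \nu_\mfl(g_3)$, we get $\nu_\mfl(g_3)\ne 0$.

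If $\nu_\mfl(g_3)>0$, then $\varphi$ is already defined over $A_\mfl$, and its reduction mod $\mfl$ has $\tau$-degree exactly $2$. So $\varphi$ has stable reduction of rank $2$ at $\mfl$.

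If $\nu_\mfl(g_3)<0$, we would first conjugate by a suitable scalar $c\in F_\mfl^{\sep}$ to make the coefficients integral. This may need a ramified extension, which is harmless for the inertia computation up to a finite-index (in fact tame, prime-to-$p$) subgroup, though we must be careful. Alternatively we note that the Newton polygon has a single segment from $q^2$ to $q^3$. On the $T$-torsion this gives $q^3-q^2$ roots of valuation $\nu_\mfl(g_2)-\nu_\mfl(g_3)$ divided by $q^3-q^2$, and $q^2$ roots forming a subgroup of integral (unit-valued) elements. Either way, we expect a Tate datum $(\psi,\Lambda)$ with $\psi$ of rank $2$ having good reduction and $\Lambda$ of rank $1$.

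Next we apply the exact sequence \eqref{s_e_s_drinfeld_datum} with $a=T$:
$$0\to\psi[T]\to\varphi[T]\xrightarrow{\psi_T}\Lambda/T\Lambda\to 0 .$$
Since $\psi$ has good reduction at $\mfl$ and $T\notin\mfl$, the module $\psi[T]$ is unramified at $\mfl$ (Néron--Ogg--Shafarevich for Drinfeld modules, or directly because $\psi_T(x)$ has unit discriminant $\mfl$-adically). Hence $I_\mfl$ acts trivially on $\psi[T]\cong(\F_q)^2$. The quotient $\Lambda/T\Lambda\cong\F_q$ is one-dimensional, so $I_\mfl$ acts on it through a character with values in $\F_q^\times$. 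Choosing a basis of $\varphi[T]$ whose first two vectors span $\psi[T]$ and whose third vector lifts a generator of $\Lambda/T\Lambda$, every element of $\bar\rho_{\varphi,(T)}(I_\mfl)$ takes the form $\psmat{I_2}{*}{0}{c}$ with $c\in\F_q^\times$, which is the claim.

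The main obstacle is the first step: rigorously producing the Tate datum with $\psi$ of good reduction of rank exactly $2$ and $\Lambda$ of rank $1$, directly over $F_\mfl$ (so that $\Lambda$ is $G_{F_\mfl}$-stable and the sequence is Galois-equivariant over $F_\mfl$ rather than over an extension). We would try the following. First, replace $\varphi$ by an $F_\mfl$-isomorphic model $u^{-1}\varphi u$ with $u\in F_\mfl^\times$ having integral coefficients and $\tau^2$-coefficient a unit. This requires solving for $\nu_\mfl(u)$ with $\nu_\mfl(g_2)+(q^2-1)\nu_\mfl(u)=0$ and $\nu_\mfl(g_3)+(q^3-1)\nu_\mfl(u)\ge0$; since $\nu_\mfl(g_2)=0$, we may take $u=1$ in the case $\nu_\mfl(g_3)>0$. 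In the case $\nu_\mfl(g_3)<0$ the reduction is instead potentially of a different type, so we would check whether the hypothesis forces $\nu_\mfl(g_3)>0$, or handle that case by Newton polygon slopes. Once an integral model with stable reduction of rank $2$ exists, Theorem~\ref{Drinfeld} gives the datum over $F_\mfl$ itself, and the rest is formal.
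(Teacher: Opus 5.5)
Your proposal is correct and is essentially the paper's proof: Tate uniformization at $\mfl$ giving $(\psi,\Lambda)$ with $\psi$ of rank $2$ with good reduction and $\Lambda$ of rank $1$, trivial $I_\mfl$-action on $\psi[T]$ because $\mfl\neq(T)$, and an $\F_q^\times$-valued character on the line $\Lambda/T\Lambda$. The paper's only difference is that it makes your lift explicit as $e_\Lambda(z)$, where $\psi_T(z)=\lambda$ and $\lambda$ generates $\Lambda$.

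The obstacle you flag does not exist. Since $g_3\in A$, we have $\nu_\mfl(g_3)\geq 0$, so $p\nmid\nu_\mfl(g_3)$ forces $\nu_\mfl(g_3)\geq 1$. Hence $\varphi^{\vec g}$ itself already has stable reduction of rank $2$ at $\mfl$, which is exactly what the paper uses. You can drop the whole discussion of $\nu_\mfl(g_3)<0$, rescaling and ramified extensions; the Newton-polygon picture you sketch there, with $q^2$ unit-valued roots, is in fact the one for the case $\nu_\mfl(g_3)>0$.
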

	\begin{proof}
		Recall that, the Drinfeld module $\varphi^{\vec{g}}_{T}=T+g_{1}\tau+g_{2}\tau^2+g_{3}\tau^{3}$ has stable reduction at $\mfl$ of rank $2$, because $\mfl\mid g_{3}$ and $\mfl\nmid g_{2}$. By Theorem~\ref{Drinfeld}, 
		the corresponding Drinfeld datum $(\psi,\Lambda)$, where $\psi:A\ra A_{\mfl}\{\tau\}$ is a Drinfeld $A_{\mfl}$-module of rank $2$ with good reduction at $\mfl$ and $\Lambda$ is a $\psi$-lattice of rank $1$. By \eqref{iso_drinfeld_datum}, it is enough to find a basis of $\psi_{T}^{-1}(\Lambda)/\Lambda$ such that the action of $I_{\mfl}$ on $\psi_{T}^{-1}(\Lambda)/\Lambda$ is of the form 
        $\left\{ \psmat{I_{2}}{*}{0}{c} : c\in  \F_{q}^{\times}\right\}$.

		The Drinfeld $A_{\mfl}$-module $\psi$ is of rank $2$ and generic, by~\cite[Corollary 3.5.3]{Pap23}, there is a $A/(T)$-basis $\{w_{1},w_{2}\}$ of $\psi[T]$. Since $\psi$ 
		has good reduction at $\mfl$ and $\mfl\neq (T)$, the Galois representation $\bar{\rho}_{\psi,(T)}: G_{F_{\mfl}}\ra\Aut(\psi[T])$ is unramified at $\mfl$. In particular, $\sigma(w_{i})=w_{i}$ for all $\sigma\in I_{\mfl}$ and for $i=1,2$.   
		
		Since $\Lambda$ is a free $A$-module of rank $1$, we may fix a generator $\lambda$ of $\Lambda$. We can choose $z\in F_{\mfl}^{\sep}$ such that $\psi_{T}(z)= \lambda$. Since $\Lambda$ is stable under the Galois action of $G_{F_{\mfl}}$, there is a character
		$\chi_{\Lambda}:G_{F_{\mfl}}\ra \Aut(A)=A^{\times}=\F_{q}^{\times}$ such that $\sigma(\lambda)=\chi_{\Lambda}(\sigma)\lambda$ for all $\sigma\in G_{F_{\mfl}}$. Since $\psi_{T}$ is compatible with the action of $G_{F_{\mfl}}$, by~\eqref{s_e_s_drinfeld_datum}, we have
		$$\psi_{T}(\sigma(z))=\sigma(\psi_{T}(z))=\sigma(\lambda)=\chi_{\Lambda}(\sigma)\lambda=\chi_{\Lambda}(\sigma)\psi_{T}(z)=\psi_{T}(\chi_{\Lambda}(\sigma)z).$$
		Thus, $\sigma(z)-\chi_{\Lambda}(\sigma)z\in \psi[T]$, therefore there are some elements $b_{\sigma,1},b_{\sigma,2},$ in $A/(T)$ such that
		$\sigma(z)-\chi_{\Lambda}(\sigma)z=b_{\sigma,1}w_{1}+b_{\sigma,2}w_{2}$, i.e., $\sigma(z)=b_{\sigma,1}w_{1}+b_{\sigma,2}w_{2}+\chi_{\Lambda}(\sigma)z$. Therefore, the action of $\sigma\in I_{\mfl}$ on $ \psi_{T}^{-1}(\Lambda)/\Lambda$ with respect to the basis $\{w_{1}+\Lambda, w_{2}+\Lambda,z+\Lambda\}$ is of the form 
		$$\csmat{1}{0}{b_{\sigma,1}}{0}{1}{b_{\sigma,2}}{0}{0}{\chi_{\Lambda}(\sigma)},\  \text{i.e.},\  \bar{\rho}_{\varphi^{\vec{g}},(T)}(I_{\mfl})\subseteq \left\{ \psmat{I_{2}}{*}{0}{c} : c\in  \F_{q}^{\times}\right\}.$$
    \end{proof}
	
	We now use some notations and facts from the proof of Lemma~\ref{Im_of_inertia_l_noteq_T}. Recall that 
	$(\psi, \Lambda)$ be the Tate datum corresponding to the Drinfeld module $\varphi^{\vec{g}}$. Note that, $\{w_1,w_2\}$ is the $A/(T)$-basis of $\psi[T]$
	and $\{e_{\Lambda}(w_1),e_{\Lambda}(w_2),e_{\Lambda}(z)\}$ is the $A/(T)$-basis of $\varphi^{\vec{g}}[T]$.
	Since $\mfl\neq (T)$ and $I_{\mfl}$ acts trivially on $w_1, w_2$,  we get $w_{1},w_2\in F_{\mfl}^{\un}$, the maximal unramified extension of $F_{\mfl}$, i.e., the fixed field of $I_{\mfl}$. In particular, $F_{\mfl}(w_1,w_2,z)\subseteq F_{\mfl}^{\un}(z)$.
	
	The valuation $\nu_{\mfl}$ can be extended uniquely to a valuation $\nu_{\mfl}^{\prime}$ on $F_{\mfl}(w_1,w_2,z)$, a finite extension of $F_{\mfl}$,
	and hence it is complete. The valuation $\nu_{\mfl}^{\prime}$ can be extended uniquely to a valuation $\nu_{\mfl}^{\prime\prime}$ on $F_{\mfl}^{\un}(z)$. To summarise, we have
	$$(F_{\mfl},\nu_{\mfl})\lra (F_{\mfl}(w_1,w_2,z), \nu_{\mfl}^{\prime})\lra (F_{\mfl}^{\un}(z), \nu_{\mfl}^{\prime\prime}).$$
    To prove Proposition~\ref{rank_3_A_nice_subgroup_of_bar_rho_varphi_vec_g_T_of_inertia_at_mfl}, we first establish the following lemma, whose proof follows by arguments similar to those in~\cite[Lemmas 4.2 and 4.3]{KS25}.

    \begin{lem}\label{valuation_of_e_Lambda}
        Let $\lambda,z,w_{1}, w_2$ be as in the proof of  Lemma~\ref{Im_of_inertia_l_noteq_T}. Then, we have the following.
		\begin{enumerate}
			\item $\nu_{\mfl}^{\prime}(e_{\Lambda}(z^{q^i}))=\nu_{\mfl}^{\prime}(z^{q^i})$ for $i<2$.
            \item Let $a_{1},a_{2},b \in A/(T)$, then
		\begin{align*}
			\nu_{\mfl}^{\prime}\left(e_{\Lambda}(a_1 w_1+a_{2}w_{2}+ b z)\right) =
			\begin{cases}
				\nu_{\mfl}^{\prime}(z) \quad \quad\quad\quad\quad\ \ \text{if } b \ne 0, \\
				\nu_{\mfl}^{\prime}(a_1 w_1 + a_{2} w_{2}) \quad \text{if } b = 0.
			\end{cases}
		\end{align*}
		\end{enumerate}
    \end{lem}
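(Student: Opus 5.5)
The plan is to work entirely with the product expansion \eqref{e_Lambda_product_expression} of $e_{\Lambda}$. The key observation is this: if $x\in F_{\mfl}(w_1,w_2,z)$ satisfies $\nu_{\mfl}^{\prime}(x)>\nu_{\mfl}^{\prime}(\lambda^\prime)$ for every $0\neq\lambda^\prime\in\Lambda$, then every factor $1-x/\lambda^\prime$ has valuation $0$. Hence
\[
\nu_{\mfl}^{\prime}(e_{\Lambda}(x))=\nu_{\mfl}^{\prime}(x).
\]
Because the product converges, the limit of the partial products also has valuation $\nu_{\mfl}^{\prime}(x)$. Both parts of the lemma will follow from this criterion once I know the valuations of $\lambda$, $z$, $w_1$ and $w_2$.

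\textbf{Step 1: $\nu_{\mfl}^{\prime}(\lambda)<0$.} Suppose instead that $\nu_{\mfl}^{\prime}(\lambda)\ge 0$. Since $\psi$ has coefficients in $A_{\mfl}$, every $\psi_a(\lambda)$ with $a\in A$ would then be integral. So $\Lambda=\{\psi_a(\lambda)\}$ would be an infinite discrete subset of the compact ring of integers of the finite extension $F_{\mfl}(\lambda)$, which is impossible.

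\textbf{Step 2: the other lattice elements.} Write $\psi_a=a+\cdots+c_a\tau^{2\deg a}$ with $c_a\in A_{\mfl}^{\times}$; the top coefficient is a unit because $\psi$ has good reduction of rank $2$. Since $\nu_{\mfl}^{\prime}(\lambda)<0$, the top term dominates, so
\[
\nu_{\mfl}^{\prime}(\psi_a(\lambda))=q^{2\deg a}\,\nu_{\mfl}^{\prime}(\lambda)\le \nu_{\mfl}^{\prime}(\lambda)\quad\text{for all } a\neq 0.
\]
Thus $\lambda$ has the largest valuation among nonzero lattice elements, and the criterion reduces to checking $\nu_{\mfl}^{\prime}(x)>\nu_{\mfl}^{\prime}(\lambda)$.

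\textbf{Step 3: valuations of $z$, $w_1$, $w_2$.} From $\psi_T(z)=\lambda$, write $\psi_T=T+h_1\tau+h_2\tau^2$ with $T$ and $h_2$ units in $A_{\mfl}$ (as $\mfl\neq (T)$).
\begin{itemize}
\item For $z$: if $\nu_{\mfl}^{\prime}(z)\ge0$ then $\lambda$ would be integral, so $\nu_{\mfl}^{\prime}(z)<0$. The $\tau^2$-term then dominates, giving $\nu_{\mfl}^{\prime}(\lambda)=q^2\nu_{\mfl}^{\prime}(z)$.
\item For $w_1,w_2$: every nonzero element of $\psi[T]$ is a root of $\psi_T(x)/x$, a polynomial with unit constant and unit leading coefficient. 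Hence every nonzero element of $\psi[T]$, and in particular every nonzero $a_1w_1+a_2w_2$, has valuation $0$.
\end{itemize}

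\textbf{Step 4: conclusion.} For part (1) and $i\in\{0,1\}$, we have $\nu_{\mfl}^{\prime}(z^{q^i})=q^i\nu_{\mfl}^{\prime}(z)>q^2\nu_{\mfl}^{\prime}(z)=\nu_{\mfl}^{\prime}(\lambda)$, because $\nu_{\mfl}^{\prime}(z)<0$; the criterion applies. For part (2), the elements $a_1,a_2,b\in A/(T)=\F_q$ act through $\psi$ as scalars.
\begin{itemize}
\item If $b\neq0$: since $\nu_{\mfl}^{\prime}(bz)=\nu_{\mfl}^{\prime}(z)<0\le\nu_{\mfl}^{\prime}(a_1w_1+a_2w_2)$, we get $\nu_{\mfl}^{\prime}(a_1w_1+a_2w_2+bz)=\nu_{\mfl}^{\prime}(z)>\nu_{\mfl}^{\prime}(\lambda)$, and the criterion gives $\nu_{\mfl}^{\prime}(z)$.
\item If $b=0$: the argument has valuation $0$ or is $0$. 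Valuation $0$ exceeds $\nu_{\mfl}^{\prime}(\lambda)$, so the criterion applies; the case $0$ is trivial.
\end{itemize}

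The main point needing care is Step 1, the negativity of $\nu_{\mfl}^{\prime}(\lambda)$ via discreteness, together with justifying termwise valuation of the infinite product. If that is delicate, I would instead use the power series \eqref{e_Lambda_power_series_expression} together with the estimates of \cite[Lemmas 4.2, 4.3]{KS25}.
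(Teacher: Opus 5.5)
Your proof is correct. The paper gives no argument for this lemma in the text; it only says it follows as in \cite[Lemmas 4.2 and 4.3]{KS25}. So what you have written is a self-contained substitute rather than a reconstruction of a displayed proof.

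You organize everything around one criterion taken from the product \eqref{e_Lambda_product_expression}: $\nu_{\mfl}^{\prime}(e_{\Lambda}(x))=\nu_{\mfl}^{\prime}(x)$ whenever $\nu_{\mfl}^{\prime}(x)>\nu_{\mfl}^{\prime}(\lambda)=\max_{0\neq\lambda^{\prime}\in\Lambda}\nu_{\mfl}^{\prime}(\lambda^{\prime})$. You then compute $\nu_{\mfl}^{\prime}(\lambda)=q^{2}\nu_{\mfl}^{\prime}(z)<0$ and $\nu_{\mfl}^{\prime}(w)=0$ for $0\neq w\in\psi[T]$. With these in hand, both parts follow from the strict triangle inequality. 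This avoids term-by-term bounds on the coefficients $u_n$ in \eqref{e_Lambda_power_series_expression}, which would in any case have to be extracted from the same product formula.

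Your argument also explains why part (1) stops at $i<2$. For $i=2$ one has $\nu_{\mfl}^{\prime}(z^{q^{2}})=\nu_{\mfl}^{\prime}(\lambda)$ and $\lambda/z^{q^{2}}\equiv h_{2}$ modulo the maximal ideal. Hence a factor $1-z^{q^{2}}/(c\lambda)$ with $c\in\F_{q}^{\times}$ may fail to be a unit, so the restriction is sharp in general.

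On the point you flagged in Step 1: an infinite discrete subset of a compact set is not contradictory by itself. What saves you is that $\Lambda$ is a discrete additive subgroup of a Hausdorff topological group, hence closed. So its intersection with the ring of integers $\mathcal{O}$ of $F_{\mfl}(\lambda)$ would be compact and discrete, hence finite. With the usual definition of a lattice (finitely many elements in any ball) this is immediate. The convergence of the infinite product, and the equality of the limit's valuation with the constant valuation $\nu_{\mfl}^{\prime}(x)$ of the partial products, are also fine, since $\nu_{\mfl}^{\prime}(\lambda^{\prime})\to-\infty$ along $\Lambda$.
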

	The valuation $\nu_{\mfl}$ can also be extended uniquely to a valuation $\nu_{\mfl}^{\un}$ on $F_{\mfl}^{\un}$, which in turn can be extended uniquely to an valuation $\nu_{\mfl}^{e_{\Lambda}}$ on $F_{\mfl}^{\un}(e_{\Lambda}(z))$. Note that, we have $\nu_{\mfl}^{e_{\Lambda}}(F_{\mfl}^{\un}(e_{\Lambda}(z))^{\times})=\frac{1}{e[F_{\mfl}^{\un}(e_{\Lambda}(z)):F_{\mfl}^{\un}]}\Z$, where $e[F_{\mfl}^{\un}(e_{\Lambda}(z)):F_{\mfl}^{\un}]$ is the ramification index of $F_{\mfl}^{\un}(e_{\Lambda}(z))/F_{\mfl}^{\un}$. To summarise, we have
	$$(F_{\mfl}, \nu_{\mfl})\ra (F_{\mfl}^{\un}, \nu_{\mfl}^{\un}) \ra (F_{\mfl}^{\un}(e_{\Lambda}(z)), \nu_{\mfl}^{e_{\Lambda}})\ra (F_{\mfl}^{\un}(z), \nu_{\mfl}^{\prime\prime}).$$
	Now, we are in a position to prove Proposition~\ref{rank_3_A_nice_subgroup_of_bar_rho_varphi_vec_g_T_of_inertia_at_mfl}

	\begin{proof}[Proof of Proposition~\ref{rank_3_A_nice_subgroup_of_bar_rho_varphi_vec_g_T_of_inertia_at_mfl}]
		Recall that
		\begin{align}\label{image_of_inertia_at_T_via_mod_mfl_rep}
			\bar{\rho}_{\varphi^{\vec{g}},(T)}(I_{\mfl})\cong I_{\mfl}/\Gal(F_{\mfl}^{\sep}/F_{\mfl}^{\un}(\varphi^{\vec{g}}[T]))\cong \Gal(F_{\mfl}^{\un}(\varphi^{\vec{g}}[T])/F_{\mfl}^{\un}),
		\end{align}
		where $I_{\mfl} :=\Gal(F^{\sep}_{\mfl}/F_{\mfl}^{\un})$. Since $w_{1},w_{2} \in F_{\mfl}^{\un}$,  we have $F_{\mfl}^{\un}(\varphi^{\vec{g}}[T])=F_{\mfl}^{\un}(e_{\Lambda}(z))$.
		By~\eqref{expression_of_varphi_mfl_in_terms_of_e_Lambda}, we have
		$$\varphi^{\vec{g}}_{T}(x)=T x\cdot\prod_{\substack{0\neq \gamma\prime \in\psi_{T}^{-1}(\Lambda)/\Lambda}}\left(1 - \frac{x}{e_{\Lambda}(\gamma\prime)} \right).$$
		Now, by comparing the leading coefficient on both sides of the above equation, up to units of $A_{\mfl}$, we  get
		$$ g_{3}
		= \pm  T
		\prod\limits_{\substack{0 \ne \gamma' \in \psi_{T}^{-1}(\Lambda)/\Lambda}}
		e_\Lambda(\gamma')^{-1}.$$
		By taking $\nu_{\mfl}^{\prime}$ on both sides and using the fact that $\{w_{1}+\Lambda,w_{2}+\Lambda,z+\Lambda\}$ is a $A/(T)$-basis of $\psi^{-1}_{T}(\Lambda)/\Lambda$, we get
		$$\nu_{\mfl}^{\prime}(g_3) = -\displaystyle\sum_{\substack{a_{1},a_{2},b\in A/(T)\\ \text{not all zero}}}^{}\nu_{\mfl}^{\prime}\big(e_{\Lambda}(a_{1}w_{1}+a_{2}w_{2}+bz)\big).$$
		By Lemma~\ref{valuation_of_e_Lambda}(2), we have
		$$ \nu_{\mfl}^{\prime}(g_3)=-q^{2}(q-1)\nu_{\mfl}^{\prime}(z) -\nu_{\mfl}^{\prime}(s),$$
		where $s=\prod\limits_{\substack{a_{1},a_{2}\in A/(T)\\ \text{not all zero}}}(a_{1}w_{1}+a_{2}w_{2})$, which is the product of all the roots of $\psi_{T}(x)/x$ as well as the constant term $T$ of $\psi_{T}(x)/x$. Since $\mfl\neq (T)$, we get
		$$\nu_{\mfl}^{\prime}(z)=-\frac {\nu_{\mfl}^{\prime}(g_3)}{q^{2}(q-1)}=-\frac {\nu_{\mfl}(g_3)}{q^{2}(q-1)}.$$
		Since $\nu_{\mfl}^{e_{\Lambda}}(F_{\mfl}^{\un}(e_\Lambda(z))^{\times})=\frac{1}{e[F_{\mfl}^{\un}(e_\Lambda(z)):F_{\mfl}^{\un}]}\Z$ and $\nu_{\mfl}^{e_{\Lambda}}(e_{\Lambda}(z))=\nu_{\mfl}^{\prime\prime}(e_{\Lambda}(z))=\nu_{\mfl}^{\prime}(e_{\Lambda}(z))=\nu_{\mfl}^{\prime}(z)$, where the last equality holds  by Lemma~\ref{valuation_of_e_Lambda}(1), hence, the number $q^2$ divides $e[F_{\mfl}^{\un}(e_\Lambda(z)):F_{\mfl}^{\un}]$, because $p\nmid\nu_{\mfl}(g_3)$. By~\eqref{image_of_inertia_at_T_via_mod_mfl_rep}, we have $|\bar{\rho}_{\varphi^{\vec{g}},(T)}(I_{\mfl})|\geq q^2$. 
        
        Now, we need to consider two cases to prove the claim.
        \begin{itemize}
            \item[(Case 1:)] Assume $|\bar{\rho}_{\varphi^{\vec{g}},(T)}(I_{\mfl})|=q^2$. By Lemma~\ref{Im_of_inertia_l_noteq_T}, let $B=\csmat{1}{0}{b_{\sigma,1}}{0}{1}{b_{\sigma,2}}{0}{0}{b_{\sigma,3}}\in \bar{\rho}_{\varphi^{\vec{g}},(T)}(I_{\mfl})$ be an element. Note that $b_{\sigma,3}\in \F_{q}^{\times}$ with order $t$. Since $B^{q^2}=I_{3\times3}$, we have $b_{\sigma,3}^{q^2}=1$. Therefore, $t\mid \gcd((q-1),q^2)=1$, and hence $b_{\sigma,3}=1$.
            
            \item[(Case 2:)] Assume $|\bar{\rho}_{\varphi^{\vec{g}},(T)}(I_{\mfl})|>q^2$. By Lemma~\ref{Im_of_inertia_l_noteq_T}, we get        
            $|\bar{\rho}_{\varphi^{\vec{g}}, (T)}(I_\mfl)|=q^2s$ where $1<s|(q-1)$. By Sylow Theorem, there exists $p$-Sylow subgroup of $\bar{\rho}_{\varphi^{\vec{g}}, (T)}(I_\mfl)$ of order $q^2$. Now, by Case 1, we are done. 
        \end{itemize}
    This completes the proof of the Proposition~\ref{rank_3_A_nice_subgroup_of_bar_rho_varphi_vec_g_T_of_inertia_at_mfl}.
\end{proof}
We now show that $M$ satisfies the second hypothesis of Proposition~\ref{Pink_R_cond_T_adic_sur}. 

    \begin{prop}\label{non-scalar_mod_T_square}
        For $\vec{g} \in \mcS^3$, the image of $\bar{\rho}_{{\varphi^{\vec{g}},(T^2)}}$, i.e., $M$ mod $(T^2)$, contains a non-scalar matrix that is congruent to the identity modulo $(T)$.
    \end{prop}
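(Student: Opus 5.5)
Following Ray's rank $2$ strategy, I would obtain the required element from the decomposition group at $(T)$ itself, using that $\varphi^{\vec g}$ has good reduction at $(T)$ of height $1$ (since $\nu_T(g_i)=0$ for all $i$, as in the proof of Proposition~\ref{det_of_T_adic_rep_sujective_3}). Because the height is $1$, the $(T^2)$-torsion splits over $F_{(T)}$ into a connected part of $A$-rank $1$ (the formal group part) and an étale part of rank $2$. In a suitable basis, the image of $G_{F_{(T)}}$ under $\bar\rho_{\varphi^{\vec g},(T^2)}$ is then block upper triangular,
$$\begin{pmatrix} \chi & * \\ 0 & U\end{pmatrix},\qquad \chi: G_{F_{(T)}}\to (A/T^2)^\times,\ U\in \GL_2(A/T^2),$$
where $U$ is unramified (it factors through the reduction) and $\chi$ is the character on the rank $1$ connected part.

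The first key step is to show that $\chi$ restricted to the inertia $I_{(T)}$ surjects onto $(A/T^2)^\times$. This is exactly the content of the height $1$ computation behind~\cite[Proposition 2.3]{Ray24a} used in Proposition~\ref{det_of_T_adic_rep_sujective_3}. The connected part is a Lubin–Tate type formal $A_{(T)}$-module of height $1$, so by local class field theory (the Carlitz–Hayes analogue of Lubin–Tate theory) the inertia acts through the full group $A_{(T)}^\times$. Alternatively, since $\det$ of the unramified block is trivial on inertia, $\chi|_{I_{(T)}}=\det|_{I_{(T)}}$, and surjectivity of $\det$ on inertia follows from the same Newton polygon argument: $\varphi_{T^n}$ has a segment with $q^n-q^{n-1}$ roots of valuation $1/(q^n-q^{n-1})$, giving total ramification of degree $|(A/T^n)^\times|$.

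Next I would choose $\sigma\in I_{(T)}$ with $\chi(\sigma)=1+T \bmod T^2$. Then $U(\sigma)=I_2$ and
$$\bar\rho_{\varphi^{\vec g},(T^2)}(\sigma)=\begin{pmatrix}1+T & *\\ 0 & I_2\end{pmatrix}.$$
This is congruent modulo $(T)$ to a unipotent matrix $\psmat{1}{\bar *}{0}{I_2}$, so it is not yet a matrix congruent to the identity. To fix this, I would raise $\sigma$ to the $p$-th power. Writing $\bar\rho(\sigma)=D+N$ with $D=\mathrm{diag}(1+T,1,1)$ and $N$ strictly upper triangular with $N^2=0$, one has $N D=N$ and $D N=(1+T)N$. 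A direct computation then gives $\bar\rho(\sigma)^p\equiv \mathrm{diag}((1+T)^p,1,1)$ modulo the $N$-part, with $N$-coefficient $\sum_{k}(1+T)^k\equiv p+\binom{p}{2}T\equiv 0$ modulo $p$ and $T^2$ (for $p$ odd). However, $(1+T)^p=1+T^p$ is $1$ modulo $T^2$, so this kills the element. The better route is therefore to pick $\sigma$ with $\chi(\sigma)=1+T$ and the upper-right block divisible by $T$. I would obtain such a $\sigma$ by multiplying by an element $\tau\in I_{(T)}$ whose reduction modulo $T$ matches the unipotent part, since unipotent mod-$T$ elements coming from $\chi\equiv 1 \pmod T$ form a $p$-group that we can adjust.

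\textbf{Main obstacle.} The cleanest fix, which I would try first, avoids this issue altogether: work with $\ker(\bar\rho_{(T)})\cap I_{(T)}$. The $\chi$-image of this group contains $1+T(A/T^2)$, because the mod-$T$ image of $I_{(T)}$ is $\F_q^\times\ltimes(\text{unipotent})$ and $\chi$ modulo $T$ is already surjective onto $\F_q^\times$ by tame ramification. I expect this counting to be the main obstacle: showing the mod-$T$ image of $I_{(T)}$ has the same order as its $\chi$-image, i.e., that the wild part at level $T$ is trivial. This holds because $F_{(T)}(\varphi[T])/F_{(T)}$ is tamely ramified of degree $q-1$, which is read off from the Newton polygon slope $1/(q-1)$ and from the fact that the étale roots are unramified. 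Granting this, an element $\sigma$ in that kernel with $\chi(\sigma)=1+T$ gives the matrix $\mathrm{diag}(1+T,1,1)+T(\ast)$. This matrix is congruent to the identity modulo $T$ and is non-scalar, since its lower $2\times 2$ block is $I_2$ while its $(1,1)$ entry is $1+T$.
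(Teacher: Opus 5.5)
There is a genuine gap at exactly the step you flag as the main obstacle, and the justification you give there is false. Only the étale \emph{quotient} $\varphi[T]/\varphi[T]^0$ is unramified. The valuation-$0$ roots of $\varphi_T$ need not be unramified: the reduction $\bar g_1x^q+\bar g_2x^{q^2}+\bar g_3x^{q^3}$ is a $q$-th power, so Hensel's lemma says nothing about them. In general the connected--étale filtration does not split over $F^{\un}_{(T)}$; this is the analogue of the non-split ordinary case for elliptic curves.

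For instance, take $\varphi_T=Tx+x^q+x^{q^2}+x^{q^3}$. Let $c\neq 0$ be a connected root, put $L=F^{\un}_{(T)}(c)$, and write $\varphi_T=h\circ(x^q-c^{q-1}x)$.
\begin{itemize}
\item The coefficients of $h$ are $q$-th powers in $L$, and $h$ has separable reduction $x+x^q+x^{q^2}$. So the nonzero roots of $h$ are $\eta^q$ with $\eta$ units of $L$.
\item An étale root is then $y=\eta+cu'$ with $u'^q-u'=\eta/c$, which has a pole of order $1$.
\item Hence $F^{\un}_{(T)}(\varphi[T])/L$ is wildly ramified of degree $q^2$, and $\bar\rho_{\varphi,(T)}(I_{(T)})$ has order $q^2(q-1)$, not $q-1$.
\end{itemize}
At this point you only use $\nu_T(g_i)=0$, so your counting cannot go through. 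Your intermediate ``better route'' of multiplying $\sigma$ by a suitable $\tau$ is circular for the same reason. It needs an element of $I_{(T)}$ with the same reduction mod $T$ as $\sigma$ but a different value of $\chi$, which is precisely what is in question.

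The statement you need is nonetheless true, by a different argument. Put $K=F^{\un}_{(T)}$, $K_1^0=K(\varphi[T]^0)$, $K_1=K(\varphi[T])$ and $K_2^0=K(\varphi[T^2]^0)$, and consider $N=\Gal(K_1\cap K_2^0/K_1^0)$.
\begin{itemize}
\item $N$ is a subquotient of the abelian group $\Gal(K_2^0/K)\cong (A/T^2)^\times$. So the conjugation action of $\Gal(K_1^0/K)\cong\F_q^\times$ on $N$ is trivial.
\item $N$ is also a quotient of $\Gal(K_1/K_1^0)\subseteq\{\psmat{1}{b}{0}{I_2}\}$, on which $a\in\F_q^\times$ acts by $b\mapsto ab$. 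That group is an $\F_q$-subspace, so for $q>2$ any quotient on which this action is trivial is itself trivial.
\end{itemize}
Hence $K_1\cap K_2^0=K_1^0$. Therefore $\chi$ maps $I_{(T)}\cap\ker\bar\rho_{\varphi^{\vec g},(T)}=\Gal(K^{\sep}/K_1)$ onto $\Gal(K_2^0/K_1^0)\cong 1+T(A/T^2)$, and your last step goes through. Repaired this way, your route uses only good height-$1$ reduction at $(T)$, not the auxiliary prime $\mfl$.

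The paper argues quite differently. It reruns the Tate-uniformization argument of Proposition~\ref{rank_3_A_nice_subgroup_of_bar_rho_varphi_vec_g_T_of_inertia_at_mfl} at level $(T^2)$ at the prime $\mfl$. This puts all of $\{\psmat{I_2}{b}{0}{1}: b\in (A/T^2)^2\}$ inside $\bar\rho_{\varphi^{\vec g},(T^2)}(I_{\mfl})$, and it then takes any nonzero $b\equiv 0 \pmod{T}$.
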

    \begin{proof}
        The proof of Proposition~\ref{rank_3_A_nice_subgroup_of_bar_rho_varphi_vec_g_T_of_inertia_at_mfl} works for any non-zero ideal $\mfa\subseteq A$ that is prime to $\mfl$. Therefore, taking $\mfa=(T^2)$ gives 
        $$\left\{\csmat{1}{0}{b_{\sigma,1}}{0}{1}{b_{\sigma,2}}{0}{0}{1} \; \middle| \; b_{\sigma,1}, b_{\sigma,2}\in A/(T^2)\right\}\subseteq\bar{\rho}_{\varphi^{\vec{g}},(T^2)}(I_{\mfl}),$$
        which proves the claim.
    \end{proof}

    Therefore, it remains only to show the following.
    \begin{prop}\label{mod_T_surjectivity}
        For $\vec{g} \in \mcS^3$, the mod-$(T)$ Galois representation $\bar{\rho}_{{\varphi^{\vec{g}},(T)}}$ is surjective, i.e., $M\equiv\GL_{3}(\F_{(T)})\pmod{(T)}$.
    \end{prop}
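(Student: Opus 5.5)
We will show that $\overline{M}=\Ima(\bar{\rho}_{\varphi^{\vec{g}},(T)})\subseteq \GL_3(\F_q)$ is all of $\GL_3(\F_q)$. The plan has three steps: show $\overline{M}$ acts irreducibly, show $\overline{M}\supseteq\SL_3(\F_q)$ using the transvections supplied by Proposition~\ref{rank_3_A_nice_subgroup_of_bar_rho_varphi_vec_g_T_of_inertia_at_mfl}, and use the determinant from Proposition~\ref{det_of_T_adic_rep_sujective_3} to get all of $\GL_3(\F_q)$.

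\textbf{Step 1 (determinant and local structure at $(T)$).} By Proposition~\ref{det_of_T_adic_rep_sujective_3}, $\det(\overline{M})=\F_q^{\times}$. Since $\nu_T(g_i)=0$ for all $i$, $\varphi^{\vec g}$ has good reduction at $(T)$ of height $1$. Hence $\varphi^{\vec g}[T]$, as a $G_{F_{(T)}}$-module, has a one-dimensional connected part coming from the formal module of height $1$. The inertia $I_{(T)}$ acts on this line through a tame character. This character is the reduction of the Lubin--Tate/Carlitz-type character of $F_{(T)}$, i.e.\ it surjects onto $\F_q^{\times}$, because $\varphi^{\vec g}_T\equiv g_1\tau+\cdots \pmod T$ with $g_1(0)\neq0$ and the Newton polygon of $\varphi^{\vec g}_T(x)/x$ has a segment of slope $1/(q-1)$. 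On the complementary two-dimensional étale quotient, $I_{(T)}$ acts trivially. So $\overline{M}$ contains, in a suitable basis, the elements $\mathrm{diag}(c,1,1)$ with $c\in \F_q^{\times}$ of order $q-1$, up to unipotent corrections. Semisimplicity, since $p\nmid q-1$, lets us take them diagonalizable.

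\textbf{Step 2 (irreducibility).} Suppose $\overline{M}$ stabilizes a line or a plane $W\subseteq\varphi^{\vec g}[T]$. Then there is a Galois stable $\F_q$-subspace, hence an $A$-submodule, giving an isogeny from $\varphi^{\vec g}$ of degree $q$ or $q^2$. We rule this out using the group from Proposition~\ref{rank_3_A_nice_subgroup_of_bar_rho_varphi_vec_g_T_of_inertia_at_mfl}. The subgroup $U=\{\psmat{I_2}{b}{0}{1}\}$, with $b\in\F_q^2$, is a full root subgroup. Its fixed plane is $\langle e_1,e_2\rangle$, and the only $U$-stable subspaces are those contained in $\langle e_1,e_2\rangle$ or containing it. Combining this with the element $\mathrm{diag}(c,1,1)$ from Step 1, whose eigenlines are in general position relative to the inertia-at-$\mfl$ flag, we aim to show that no proper subspace is stable under both.

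This is the step I expect to be the main obstacle, since the relative position of the two local bases is not controlled. If the direct comparison fails, I will fall back on a global argument. A stable line would give a character $G_F\to\F_q^{\times}$ that is unramified outside $(T),\mfl$ and $\infty$. Its restriction to $I_{\mfl}$ must be trivial, by the shape in Lemma~\ref{Im_of_inertia_l_noteq_T} and the prime-to-$p$ order. One then compares with the restriction to $I_{(T)}$ and to the degree/determinant to reach a contradiction. Stable planes are handled by passing to duals, i.e.\ via $\det$ and $\wedge^2$.

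\textbf{Step 3 (generation).} Once $\overline{M}$ is irreducible and contains the root group $U$ of order $q^2$, which consists of transvections, we apply the classification of irreducible subgroups of $\GL_3(\F_q)$ generated by transvections (McLaughlin). The normal subgroup generated by the $\overline{M}$-conjugates of $U$ is irreducible, since a reducible normal subgroup would have an invariant subspace structure contradicting irreducibility of $\overline M$ for $p$ odd. It is generated by transvections whose centers and axes span all of $\F_q^3$, so it is $\SL_3(\F_{q'})$ for a subfield $\F_{q'}$ or an $\mathrm{SU}$/$\mathrm{Sp}$-type group. Symplectic groups are excluded in odd dimension. Unitary groups and proper subfields are excluded because $U$ contains all $q^2$ transvections in one full root group over $\F_q$; the hypothesis $q>9$ odd is used here to avoid small exceptional cases. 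Therefore $\SL_3(\F_q)\subseteq\overline{M}$. Together with $\det(\overline{M})=\F_q^{\times}$, this gives $\overline{M}=\GL_3(\F_q)$.
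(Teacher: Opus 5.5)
Your Step~2 has a genuine gap, and no argument can close it: irreducibility of $\varphi^{\vec g}[T]$ does not follow from $\vec g\in\mcS^3$, and the proposition itself is false.

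Since $\varphi^{\vec g}_T$ is $\F_q$-linear, $\varphi^{\vec g}_T(c)=c\,(T+g_1+g_2+g_3)$ for $c\in\F_q$. For $q$ odd, take $\vec g=(-2(T+1),\,1,\,T+1)$. Each $g_i$ has nonzero constant term, so $\nu_T(g_i)=0$. With $\mfl=(T+1)$ we get $\nu_{\mfl}(g_2)=0$ and $\nu_{\mfl}(g_3)=1$, so $\vec g\in\mcS^3$. But $T+g_1+g_2+g_3=0$, so $\F_q\subseteq\varphi^{\vec g}[T]$ is a line fixed pointwise by $G_F$. Hence $\bar\rho_{\varphi^{\vec g},(T)}(G_F)$ lies in the stabilizer of a nonzero vector.

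This is exactly where both of your routes break. The stable line is fixed by $I_{\mfl}$ and by $I_{(T)}$. So no relative-position argument between the inertia-at-$\mfl$ flag and the eigenlines of $\mathrm{diag}(c,1,1)$ can exclude it. The character on the line is trivial, so comparing ramification at $(T)$, $\mfl$, $\infty$ with $\det$ gives no contradiction. Triviality of the character is not essential either: for $\vec g=(2,1,T-1)$ and $\zeta^{q-1}=-1$, one checks $\varphi^{\vec g}_T(\zeta)=0$. This gives a stable line on which $G_F$ acts through the unramified quadratic character.

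The paper's proof has the same hole. It gets irreducibility by claiming that $x^{q^3-1}+\frac{g_1}{T}x^{q^3-q}+\frac{g_2}{T}x^{q^3-q^2}+\frac{g_3}{T}$ is Eisenstein at $1/T$. That would require $\nu_{1/T}(g_3/T)=1-\deg_T g_3=1$, i.e.\ $g_3$ constant, which is impossible since $\mfl\mid g_3$. In the example above, $x=1$ is a root of this polynomial.

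Apart from this, your outline is sound and differs from the paper only in Step~3. You use McLaughlin's classification of irreducible groups generated by transvections. The paper instead runs through Aschbacher's classes and rules out each one using $q^2\mid|\widetilde M|$ and $Z(\GL_3(\F_q))\subseteq\widetilde M$. Either route finishes once irreducibility is known. What is missing is an additional hypothesis on $\vec g$ that forces irreducibility, not a cleverer argument. One candidate is an analogue of the extra conditions at a second prime in Ray's rank-$2$ criterion.
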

    To prove Proposition~\ref{mod_T_surjectivity}, we first prove the irreducibility of the mod-$(T)$ Galois representation.
    \begin{lem}\label{varphi_T_irreducible_F_T_G_F_module}
		For $\vec{g} \in \mcS^3$, the $\F_{(T)}[G_{F}]$-module $\varphi^{\vec{g}}[T]$ is irreducible.
	\end{lem}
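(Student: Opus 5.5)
The plan is to argue by contradiction. Suppose $V\subsetneq \varphi^{\vec g}[T]$ is a nonzero $G_F$-stable $\F_q$-subspace, so $\dim V\in\{1,2\}$. Work in the basis $\{e_\Lambda(w_1),e_\Lambda(w_2),e_\Lambda(z)\}$ of Lemma~\ref{Im_of_inertia_l_noteq_T} and put $W:=\mathrm{span}\{e_\Lambda(w_1),e_\Lambda(w_2)\}=e_\Lambda(\psi[T])$.

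\textbf{Step 1: every invariant subspace lies in $W$.} By Proposition~\ref{rank_3_A_nice_subgroup_of_bar_rho_varphi_vec_g_T_of_inertia_at_mfl}, the image $\bar\rho_{\varphi^{\vec g},(T)}(I_\mfl)$ contains the full unipotent group $U$ of matrices $\psmat{I_2}{b}{0}{1}$ with $b\in\F_q^2$. For $u\in U$ and $v=(v_1,v_2,v_3)^t$ we have $(u-1)v=v_3\,(b,0)^t$. Hence if some $v\in V$ has $v_3\neq 0$, then letting $b$ range over $\F_q^2$ shows $W\subseteq V$, so $V\supsetneq W$ and $V=\varphi^{\vec g}[T]$, a contradiction. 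Therefore $V\subseteq W$, and either $\dim V=1$ or $V=W$.

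\textbf{Step 2: the local structure at $(T)$.} The Newton polygon of $\varphi^{\vec g}_T(x)/x=T+g_1x^{q-1}+g_2x^{q^2-1}+g_3x^{q^3-1}$ at $(T)$ uses $\nu_T(T)=1$ and $\nu_T(g_i)=0$. It has exactly $q-1$ roots of valuation $\frac1{q-1}$, and these roots together with $0$ form an $\F_q$-line $L$. All other roots are units. The extension cut out by $L$ is totally tamely ramified of degree $q-1$, so $I_T$ acts on $L$ through a surjective character $\omega:I_T\to\F_q^\times$. The quotient $\varphi^{\vec g}[T]/L$ is unramified at $(T)$ (this is the étale part, reduction height $1$). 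Since the image of $I_T$ has order prime to $p$, its action is semisimple, and $\varphi^{\vec g}[T]=L\oplus E$ as $I_T$-modules, with $I_T$ trivial on $E$.

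\textbf{Step 3: the two cases.} First let $\dim V=1$. Write $V=\F_q v_0$ and $c:=v_0^{q-1}$. Then $c\in F^\times$, because $x^{q-1}-c=\prod_{a\in\F_q^\times}(x-av_0)$ is a Galois-stable factor of $\varphi^{\vec g}_T(x)/x$. The $G_F$-action on $V$ is the Kummer character of $F(c^{1/(q-1)})/F$.
\begin{itemize}
\item At every finite prime, the Newton polygon of $\varphi^{\vec g}_T(x)/x$ pins down $\nu(c)$. At $\mfl$, $V\subseteq W$ consists of $e_\Lambda$ of roots of $\psi_T$. The Newton polygon of $\psi_T$ at $\mfl$, whose constant term is $T$ and a unit at $\mfl$, gives $\nu_\mfl(c)=0$.
\item If $V=L$, then $\nu_T(c)=1$. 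Since $V$ lies in a finite Galois-stable set of roots, $c$ is integral at all other finite primes, and its divisor has degree $0$. I will then compare $\nu_T(c)=1$ with the constraints coming from the leading coefficient $g_3$ and the Gauss lemma factorisation of $\varphi^{\vec g}_T(x)/x$ in $A[x]$, aiming for a contradiction.
\item If $V\subseteq E$, then $c$ is a unit at $(T)$ and at $\mfl$. I will try to show the Kummer character is unramified at all finite places and tamely ramified at $\infty$ with exponent dividing $q-1$. Degree-zero divisor considerations then force it to be a constant-field character. I would then rule this out using that the product of the $q^3-1$ nonzero roots is $-T/g_3$ up to sign, together with $\det\bar\rho$ being the surjective (Carlitz-type) character from Proposition~\ref{det_of_T_adic_rep_sujective_3}.
\end{itemize}
Next let $V=W$. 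The quotient $\varphi^{\vec g}[T]/W$ is then a character $\chi$, and $I_\mfl$ acts on it through $\chi_\Lambda$. Being $I_T$-stable, $W$ must be either $E$ or contain $L$. I will play the determinant off against $\chi$ and the character of $\det W$, both restricted to $I_T$. Here $\det|_{I_T}=\omega$ and $\det W$ is unramified at $\mfl$. Applying the Step 3 line argument to the dual representation (equivalently, to $\wedge^2$) should reduce this case to the one-dimensional one.

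\textbf{Main obstacle.} I expect Step 1 to be routine. The hard part is the one-dimensional subcase $V\subseteq E$: excluding a global character that is unramified at all finite places needs a clean global input. I would first try Kummer theory over $F=\F_q(T)$, namely that a degree-zero divisor of $c$ which is $\equiv 0\pmod{q-1}$ away from $\infty$ forces $c\in\F_q^\times F^{\times(q-1)}$. I would then combine this with the hypothesis $q>9$ odd to eliminate the few remaining possibilities.
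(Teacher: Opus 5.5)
Your Step 1 is sound. Since $\bar\rho_{\varphi^{\vec g},(T)}(I_\mfl)$ contains the full group $\psmat{I_2}{*}{0}{1}$, every nonzero proper $G_F$-stable subspace lies in $W=e_\Lambda(\psi[T])$. The argument breaks in Step 3, in the one-dimensional subcase where the line is unramified at $(T)$. You hope to exclude a $G_F$-stable line with (essentially) unramified Kummer character by playing it off against the surjective determinant. No such contradiction exists, because these lines really occur for elements of $\mcS^3$, so the statement as written is false. Take $q$ odd and
\[
\vec g=(g_1,g_2,g_3)=\bigl(-2(T+1),\,1,\,T+1\bigr),\qquad \mfl=(T+1).
\]
The constant terms are $-2,1,1$, so $\nu_T(g_i)=0$ for all $i$. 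Also $\nu_\mfl(g_2)=0$ and $\nu_\mfl(g_3)=1$, so $\vec g\in\mcS^3$. But $T+g_1+g_2+g_3=0$, hence $\varphi^{\vec g}_T(a)=a\,(T+g_1+g_2+g_3)=0$ for every $a\in\F_q$. Thus $\F_q\subseteq\varphi^{\vec g}[T]$ is a $G_F$-fixed line, so $\varphi^{\vec g}[T]$ is reducible. With it, Theorem~\ref{T_adic_sur_rank_3} also fails for this $\vec g$.

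This example is consistent with everything you did establish. The line lies in $W$, as your Step 1 predicts, and its Kummer constant is $c=1$. The surjective determinant is simply carried by the two-dimensional quotient. So no global input can close your ``main obstacle''. What is missing is an additional hypothesis ruling out $G_F$-stable subspaces of $W$. Separately, your Step 2 claim that $I_T$ acts through a group of order prime to $p$ is unjustified. Modulo $T$ one has $\bar\varphi_T(x)=(\bar g_1x+\bar g_2x^q+\bar g_3x^{q^2})^q$, so the unit roots collide in cosets of $L$ and may generate wildly ramified extensions. Only $\varphi^{\vec g}[T]/L$ is known to be unramified, so the splitting $L\oplus E$ is not available.

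The paper's own proof does not escape this either. It deduces irreducibility from transitivity of $G_F$ on the nonzero roots. It then claims that $Tx^{q^3-1}+g_1x^{q^3-q}+g_2x^{q^3-q^2}+g_3$ is irreducible by Eisenstein at $1/T$. But Eisenstein at $\infty$ requires $v_\infty(g_i/T)=1-\deg g_i\ge 1$, i.e.\ all $g_i$ constant. That is impossible here, since $\mfl\mid g_3$ forces $\deg g_3\ge1$. In the example above, $x=1$ is visibly a root of that reciprocal polynomial. Neither route can prove the lemma, because it needs strengthened hypotheses.
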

    \begin{proof}
        Let $F(\varphi^{\vec{g}}[T])$ denote the splitting field of the separable polynomial
        $\varphi^{\vec{g}}_{T}(x)/x$.        
        The claim follows if we can show that the action of $\Gal(F(\varphi^{\vec{g}}[T])/F)\subseteq G_{F}$ on $\varphi^{\vec{g}}[T]$ has only one Galois orbit, i.e., the action of $\Gal(F(\varphi^{\vec{g}}[T])/F)$ on the roots of $\varphi^{\vec{g}}_{T}(x)/x$ is transitive. Since $\varphi^{\vec{g}}_{T}(x)/x$ is separable, this action is transitive if and only if $\varphi^{\vec{g}}_{T}(x)/x$ is irreducible over $F$ if and only if its reciprocal polynomial 
        $$x^{q^3-1}\varphi^{\vec{g}}_{T}(x^{-1})/x^{-1}=Tx^{q^3-1}+g_1x^{q^3-q}+g_2x^{q^3-q^2}+g_3$$
        is irreducible over $F$. Since $T\in F^{\times}$, it is enough to show the polynomial
        $$x^{q^3-1}+\frac{g_1}{T}x^{q^3-q}+\frac{g_2}{T}x^{q^3-q^2}+\frac{g_3}{T}$$
        is irreducible over $F$. By the Eisenstein criterion, $x^{q^3-1}+\frac{g_1}{T}x^{q^3-q}+\frac{g_2}{T}x^{q^3-q^2}+\frac{g_3}{T}$ irreducible on $F_{(\frac{1}{T})}$, the completion of $F$ at $(\frac{1}{T})$, and hence irreducible  over $F$.
    \end{proof}

    Using Lemma~\ref{varphi_T_irreducible_F_T_G_F_module}, we are now in a position to prove Proposition~\ref{mod_T_surjectivity}.
     	\begin{proof}[Proof of Proposition~\ref{mod_T_surjectivity}] The claim is to show that the mod-$(T)$ Galois representation $\bar{\rho}_{{\varphi^{\vec{g}}(T)}}$ is surjective, i.e., $\overline{M}=\Ima(\bar{\rho}_{\varphi^{\vec{g}}, (T)}) =\GL_{3}(\F_{q})$. Suppose this is not true. Then, there exists a maximal subgroup $\widetilde{M}$ of $\GL_{3}(\F_{q})$ that contains  $\overline{M}$. By Proposition~\ref{det_of_T_adic_rep_sujective_3}, we also get $\det(\widetilde{M})=\F_{q}^{\times}$, which implies $[\widetilde{M}:\widetilde{M}\cap\SL_{3}(\F_{q})] = q-1$. In particular, we also get $\widetilde{M}\cap\SL_{3}(\F_{q})$ is a proper subgroup of $\SL_{3}(\F_{q})$. If not, then $\SL_{3}(\F_{q}) \subseteq \widetilde{M}$. Since $\det(\widetilde{M})=\F_{q}^{\times}$, we have $\widetilde{M}=\GL_{3}(\F_{q})$. This is a contradiction, since $\widetilde{M}$ is proper. 
			
			We now show that $\widetilde{M}$ contains the center $Z(\GL_{3}(\F_{q}))$. Suppose, if possible, $Z(\GL_{3}(\F_{q}))\nsubseteq \widetilde{M}$. Now, the maximality of $\widetilde{M}$ would imply that $\widetilde{M}Z(\GL_{3}(\F_{q}))=\GL_{3}(\F_{q})$. Since ${q}\nmid |Z(\GL_{3}(\F_{q}))|$ and $q^3\mid |\GL_{3}(\F_{q})|$, we get $q^3\mid |\widetilde{M}|$. Therefore, $\widetilde{M}$ contains a Sylow $p$-subgroup of $\GL_{3}(\F_{q})$. Since the action of $\overline{M}$ on $\F_{q}^3$ is irreducible by Lemma~\ref{varphi_T_irreducible_F_T_G_F_module}, so is the action of $\widetilde{M}$. Therefore, by \cite[Lemma 16]{Che22}, $M$ contains $\SL_{3}(\F_{q})$. Arguing as before, $\widetilde{M}=\GL_{3}(\F_{q})$, which is a contradiction to the fact that $\widetilde{M}$ is a proper subgroup.
			
			To summarise, we have the following information about $\widetilde{M}$:
			\begin{enumerate}
				\item $\widetilde{M}$ is a maximal subgroup of $\GL_{3}(\F_{q})$ that does not contain  $\SL_{3}(\F_{q})$,
				\item The action of $\widetilde{M}$ on $\F_{q}^3$, via the mod-$(T)$ representation $\bar{\rho}_{\varphi^{\vec{g}},(T)}$, is irreducible,
				\item By Proposition~\ref{rank_3_A_nice_subgroup_of_bar_rho_varphi_vec_g_T_of_inertia_at_mfl}, $q^2\mid |\widetilde{M}|$. Moreover, $q^2\mid |\widetilde{M}\cap \SL_{3}(\F_{q})|$ (as $|\widetilde{M}|=(q-1)|\widetilde{M}\cap \SL_{3}(\F_{q})|$),
				\item $Z(\GL_{3}(\F_{q}))\subseteq \widetilde{M}$.
			\end{enumerate}
			
			Recall that, the claim is to show $\overline{M}=\GL_{3}(\F_{q})$ and on the contrary, we assumed that $\overline{M}$ is proper and hence $\overline{M}$ is contained in a maximal subgroup $\widetilde{M}$ of $\GL_{3}(\F_{q})$. By Aschbacher's Theorem~\cite[Theorem 2.2.19]{BHR13}, the maximal subgroups of $\GL_{3}(\F_{q})$ which do not contain $\SL_{3}(\F_{q})$ are classified into $9$ classes: $8$ geometric classes $\mathcal{C}_{1},\ldots,\mathcal{C}_{8}$ and one special class $\mathcal{S}$. A brief description of these classes can be found in~\cite[Appendix A.2]{Che22}. To obtain the desired contradiction, it suffices to show that $\widetilde{M}$ cannot fall into these classes based on the information on $\widetilde{M}$.
			
			$\bullet \ \mathcal{C}_{1}$: Suppose that $\widetilde{M}$ belongs to $\mathcal{C}_{1}$, then
			$\widetilde{M}$ stabilizes a proper non-zero subspace of $\F_{q}^3$. This cannot happen since $\widetilde{M}$ acts irreducibly on $\F_{q}^3$.
			
			$\bullet \ \mathcal{C}_{2}$: Suppose that $\widetilde{M}$ belongs to $\mathcal{C}_{2}$, then, there is a direct sum decomposition of $\F_{q}^3$ into three $1$-dimensional subspaces. Then, 
			the action of $\widetilde{M}$ on $\F_{q}^3$ is of type $\GL_{1}(\F_{q})\wr S_{3}=\GL_{1}(\F_{q})^3\rtimes S_{3}$, the wreath product of $\GL_{1}(\F_{q})$ and the symmetric group $S_{3}$. So we have, $|\widetilde{M}|$ divide $|\GL_{1}(\F_{q})^3\rtimes S_{3}| = (q-1)^3\cdot 3!$. This is a contradiction, since 
			$q^2\mid|\widetilde{M}|$ and $q>9$. Therefore, $\widetilde{M}$ cannot 
			lie in $\mathcal{C}_{2}$.
			
			$\bullet \ \mathcal{C}_{3}$:  Suppose that $\widetilde{M}$ belongs to $\mathcal{C}_{3}$, then the action of $\widetilde{M}$  on $\F_{q}^3$ is of type $\GL_{1}(\F_{(q)^3})$. So $|\widetilde{M}|$ divides $|\GL_{1}(\F_{(q)^3})| = \left(q^3-1\right)$, which is a contradiction as $q^2\mid|\widetilde{M}|$.
			
			$\bullet \ \mathcal{C}_{4}$:  Since there is no integer between $1$ and $\sqrt{3}$, we need not consider this case.
			
			$\bullet \ \mathcal{C}_{5}$:  Suppose that $\widetilde{M}$ belongs to $\mathcal{C}_{5}$. Then, there is a  proper  subfield $\F_{q_{0}}$ of $\F_{q}$ such that a conjugate of $\widetilde{M}$ in $\GL_{3}(\F_{q})$ is a subgroup of $\langle Z(\GL_{3}(\F_{q})),\GL_{3}(\F_{q_{0}}) \rangle$. Note that $q=q_{0}^d$ where $d\geq 2$.    Therefore, $|\widetilde{M}|$ divides $|\langle Z(\GL_{3}(\F_{q})),\GL_{3}(\F_{q_{0}})\rangle|$. Now
			\begin{align*}
				|\langle Z(\GL_{3}(\F_{q})),\GL_{3}(\F_{q_{0}}) \rangle |&=\frac{|Z(\GL_{3}(\F_{q}))|\times|\GL_{3}(\F_{q_{0}})|}{|Z(\GL_{3}(\F_{q}))\cap\GL_{3}(\F_{q_{0}})|}\\
				&=\frac{(q-1)(q_{0}^3-1)(q_{0}^3-q_{0})(q_{0}^3-q_{0}^2)}{(q_{0}-1)}\\
				&=q_{0}^3(q-1)(q_{0}^3-1)(q_{0}^2-1)
			\end{align*}
			Since $q^{2}\mid |\widetilde{M}|$, we get $q^{2}=q_{0}^{2d} \mid q_{0}^3(q-1)(q_{0}^3-1)(q_{0}^2-1)$,  which is a contradiction as $2d>3$. Therefore, $\widetilde{M}$ cannot lie in $\mathcal{C}_{5}$.
			
			$\bullet \ \mathcal{C}_{6}$: Suppose that $\widetilde{M}$ belongs to $\mathcal{C}_{6}$. By~\cite[Page 114]{BHR13}, 
			we should have $q=p\equiv 1\pmod{3}$. Then there is an absolutely irreducible extraspecial $3$-group $E$ of order $3^{1+2}$ such that $E\trianglelefteq \widetilde{M}\leq N_{\GL_{n}(q)}(E)$, the normalizer of $E$ in $\GL_{n}(q)$, and the action of $\widetilde{M}$ on $\F_{q}^3$ is of type $3^{1+2}. \Sp_{2}(3)$ (cf. \cite[\S 1.2]{BHR13} for these notations). So we have $|\widetilde{M}|$ divides $|3^{1+2}. \Sp_{2}(3)|= 2^3\cdot 3^4$, which is a contradiction, as $q^2\mid|\widetilde{M}|$ and $q > 9$.
			
			
			$\bullet \ \mathcal{C}_{7}$:  Since there are no integers $t\geq 2$ and $m\geq 1$ such that $3=m^t$, we do not need to consider this case.
			
			$\bullet \ \mathcal{C}_{8}$: Suppose that $\widetilde{M}$ belongs to $\mathcal{C}_{8}$, then $\widetilde{M}$ preserves a non-degenerate classical form on $\F_{q}^3$ up to scalar multiplication. By classical form, we mean symplectic form, uniform form, or quadratic form:
			\begin{itemize}
				\item[(i)] Symplectic form: These forms exist only on even-dimensional vector spaces. In our case, the dimension is $3$, so they do not exist.
				
				\item[(ii)]  Uniform form: To have a unitary form on a vector space over a finite field $\F_{q}$, $q$ must be a square. So assume $q=(q^{\prime})^2$. Then, the action of $\widetilde{M}$ on $\F_{q}^3$ is of type $\GU_{3}(q^{\prime})$. Therefore, $|\widetilde{M}|$ divides $|\GU_{3}(q^{\prime})|$ and by~\cite[Theorem 1.6.22]{BHR13} $|\GU_{3}(q^{\prime})|=(q^{\prime})^3\left(q^{\prime}+1\right)\left((q^{\prime})^2-1\right)\left((q^{\prime})^3+1\right)$, which is a contradiction, as $q^2=(q^{\prime})^4$ divides $|\widetilde{M}|$.
				\item[(iii)]  Quadratic form: The action of $\widetilde{M}$ on $\F_{q}^3$ is of type $\GO_{3}(q)$. Therefore, $|\widetilde{M}|$ divides $|\GO_{3}(q)|$ and by~\cite[Theorem 1.6.22]{BHR13} $|\GO_{3}(q)|=2q(q^2-1)$, which is a contradiction, as $q^2$ divides $|\widetilde{M}|$ and $q > 9$.
			\end{itemize}
			Therefore, $\widetilde{M}$ cannot lie in $\mathcal{C}_{8}$.
			
			$\bullet \ \mathcal{S}$: For this special class, we need to look at the proper subgroup $\widetilde{M} \cap \SL_3(\F_{q})$ that contains $Z(\SL_{3}(\F_{q}))$ in $\SL_3(\F_{q})$. Note that we have the property $q^2\mid |\widetilde{M} \cap \SL_3(\F_{q})|$. Therefore, by~\cite[Theorem 4.10.2]{BHR13}, the group $\widetilde{M} \cap \SL_3(\F_{q})$ can be any of these subgroups: $\PSL_{3}(2)\times Z(\SL_{3}(\F_{q}))$, $3^{\cdot}\textrm{A}_{6}$, $3^{\cdot}\textrm{A}_6.2_3 $, $3^{\cdot}\textrm{A}_{7}$ (cf.~\cite[\S 1.2]{BHR13} for these notations). Observe that cardinality of $\PSL_{3}(\F_2)\times Z(\SL_{3}(\F_{q}))$ is either $2^3\cdot3\cdot7$ or $2^3\cdot3^2\cdot7$ depending on $|Z(\SL_{3}(\F_{q}))|=1$ or $3$. Now the cardinality of the remaining groups is $2^3\cdot3^3\cdot5$, $2^4\cdot3^3\cdot5$, $2^3\cdot3^3\cdot5\cdot7$, respectively. Since $q > 9$ and $q^2\mid |\widetilde{M} \cap \SL_3(\F_{q})|$, we get that
			$\widetilde{M} \cap \SL_3(\F_{q})$ cannot be any of them. Hence, $\widetilde{M}$ cannot lie in $\mathcal{S}$. 
            
        Therefore, we get $\overline{M}=\GL_{3}(\F_q)$, i.e., $M\equiv \GL_{3}(\F_{(T)})\pmod{(T)}$.
        \end{proof}

        Since $M=\Ima(\rho_{\varphi^{\vec{g}},(T)})$ satisfies the hypotheses of Proposition~\ref{Pink_R_cond_T_adic_sur}, we complete the proof of Theorem~\ref{T_adic_sur_rank_3}.
        We now calculate the density of $\mcS^{3}$.
        \begin{thm}\label{density_rank_3}
            Let $q>9$ be odd. The density $\mathfrak{d}(\mcS^{3})$ of $\mcS^{3}$ is
            $\big(1-\frac{1}{q}\big)^3.$
        \end{thm}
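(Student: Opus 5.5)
We will show that the condition ``$\nu_T(g_i)=0$ for $i=1,2,3$'' already has density $(1-\frac1q)^3$, and that the extra condition involving $\mfl$ removes only a density-zero set. Let $\mathcal{U}^3\subseteq\mcW^3$ be the set of $\vec g$ with $\nu_T(g_i)=0$ for all $i$; note $g_3\neq0$ is then automatic. Then $\mcS^3=\mathcal{U}^3\setminus\mathcal{E}$, where $\mathcal{E}$ is the set of $\vec g\in\mathcal{U}^3$ such that for every $\mfl\neq(T)$, either $\mfl\mid g_2$ or $p\mid\nu_\mfl(g_3)$.

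\textbf{Step 1: counting $\mathcal{U}^3(N)$.} For $N\ge1$, the condition $\nu_T(g_i)=0$ says only that the constant term of $g_i$ is nonzero. Hence
\[
|\mathcal{U}^3(N)|=\bigl(q^{N-1}(q-1)\bigr)^3 ,
\]
and
\[
\frac{|\mathcal{U}^3(N)|}{|\mcW^3(N)|}=\frac{q^{3N}(1-\frac1q)^3}{q^{2N}(q^N-1)}\longrightarrow\Big(1-\frac1q\Big)^3 .
\]

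\textbf{Step 2: $\mathcal{E}$ has density zero.} It suffices to bound $|\mathcal{E}(N)|/q^{3N}\to0$. Fix an exceptional triple. I would use the monic irreducibles $\mfl\neq T$ of a fixed degree $d$, dividing $g_3$ exactly once, and not dividing $g_2$. It is cleaner to bound the probability that \emph{no} prime of degree $d$ satisfies ``$\nu_\mfl(g_3)=1$ and $\mfl\nmid g_2$.''

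For $N$ large compared to $d$, the reductions of $(g_2,g_3)$ modulo $\prod_{\deg\mfl=d}\mfl^2$ are equidistributed (CRT; the map $A_{<N}\to A/\mfa$ is surjective with equal fibres once $N\ge\deg\mfa$). This remains true even after imposing the constant-term condition at $T$, which is coprime to these primes. So these events are independent across $\mfl$, each with probability
\[
c_\mfl=\frac{(|\mfl|-1)}{|\mfl|^2}\cdot\Big(1-\frac1{|\mfl|}\Big)\approx q^{-d}.
\]

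Hence the limsup density of $\mathcal{E}$ is at most
\[
\prod_{\deg\mfl\le D,\ \mfl\ne T}(1-c_\mfl)\le\exp\Big(-\sum_{d\le D}\pi(d)\,c_d\Big),
\]
where $\pi(d)\sim q^d/d$ is the number of monic irreducibles of degree $d$. So the exponent is $\gtrsim\sum_{d\le D}1/d\to\infty$, using the prime polynomial theorem. Letting $D\to\infty$ gives upper density $0$.

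\textbf{Step 3: conclusion.} Since $\mathcal{E}\subseteq\mathcal{U}^3$, we have
\[
\frac{|\mathcal{U}^3(N)|-|\mathcal{E}(N)|}{|\mcW^3(N)|}=\frac{|\mcS^3(N)|}{|\mcW^3(N)|}.
\]
The left side converges to $(1-\frac1q)^3$, so the density exists and equals $(1-\frac1q)^3$.

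The main obstacle is Step 2. It requires justifying independence and equidistribution of the residues for each finite $D$ (the CRT count, including the interaction with the condition at $T$) uniformly as $N\to\infty$, and then exchanging limits. I would handle this by fixing $D$, computing $\limsup_N$ exactly as a finite product, and only afterwards sending $D\to\infty$. The divergence of $\sum_{\mfl}|\mfl|^{-1}$ over monic irreducibles then finishes the argument.
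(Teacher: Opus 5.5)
Your proof is correct. It uses the same decomposition as the paper but proves directly the step the paper only cites. The paper also writes $\mcS^3=\mcS^3_1\cap\mcS^3_2$; your $\mathcal{U}^3$ is their $\mcS^3_1$, and they do the same constant-term count at $T$. For the $\mfl$-condition, however, they simply invoke \cite[Proposition 6.1]{Ray24b} to get $\mathfrak{d}(\mcS^3_1\cap\mcS^3_2)=\mathfrak{d}(\mcS^3_1)$, which amounts to saying that the complement of $\mcS^3_2$ has density zero.

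You prove that density-zero statement yourself. You pass to the stronger local event ``$\nu_\mfl(g_3)=1$ and $\mfl\nmid g_2$''. You then use CRT modulo $T\prod_{\deg\mfl\le D,\,\mfl\neq T}\mfl^2$ to get an exact count for $N$ large, which gives
\[
\limsup_{N}\frac{|\mathcal{E}(N)|}{|\mcW^3(N)|}\le\Big(1-\frac1q\Big)^3\prod_{\deg\mfl\le D}(1-c_\mfl).
\]
Finally you let $D\to\infty$, using the divergence of $\sum_\mfl|\mfl|^{-1}$.

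Your route gains self-containedness and transparency. It shows explicitly why the condition at $T$ is independent of the $\mfl$-conditions (the moduli are coprime). It does not depend on Ray's proposition being stated in exactly the form needed here, with $\nu_\mfl(g_2)=0$ and $p\nmid\nu_\mfl(g_3)$. It also works verbatim for every rank $r$. The paper's route gains only brevity.

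As in the paper, the hypothesis that $q>9$ is odd plays no role in this density computation.
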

          \begin{proof}
           Define 
           $$\mcS^{3}_{1}:=\left\{\vec{g}=(g_1,g_2,g_3)\in \mcW^{3}\; \middle| \;\nu_{T}(g_i)=0\ \text{for}\ i=1,2,3\right\},\quad \text{and}$$ 
           $$\mcS^{3}_{2}:= \left\{\vec{g}=(g_1,g_2,g_3)\in \mcW^{3} \; \middle| \;
	\begin{array}{l}
		\text{there exists}\ \mfl\in \Omega_{A}\setminus{(T)} \text{ such that}  \\
		\nu_{\mfl}(g_2)=0\ \text{and}\ p\nmid\nu_{\mfl}(g_3)
	\end{array}
	\right\}.$$
          Clearly, $\mcS^{3}=\mcS^{3}_{1}\cap\mcS^{3}_{2}$. By~\cite[Proposition 6.1]{Ray24b}, we get $\mathfrak{d}(\mcS^{3})=\mathfrak{d}(\mcS^{3}_{1}\cap\mcS^{3}_{2})=\mathfrak{d}(\mcS^{3}_{1})$. Any polynomial $g=g(T)\in A$ with $\deg_{T}(g)<N$ and $T\mid g$ has to be of the form $g=Th$ where $h\in A$ with $\deg_{T}(h)<N-1$. Since the number of $g\in A$ with $\deg_{T}(g)<N$ and $T\mid g$ is $q^{N-1}$, the number of $g\in A$ with $\deg_{T}(g)<N$ and $T\nmid g$ is $q^N-q^{N-1}=q^N(1-\frac{1}{q}).$ Therefore, we have
          $$\mathfrak{d}(\mcS^{3})= \mathfrak{d}(\mcS^{3}_{1})=\lim_{N \to \infty} \frac{|\mcS^{3}_{1}(N)|}{|\mcW^{3}(N)|}=\lim_{N \to \infty}\frac{q^{3N}\big({1-\frac{1}{q}}\big)^3}{q^{2N}(q^N-1)}=
          \left(1-\frac{1}{q}\right)^3.$$
          \end{proof}
          We conclude this section with a remark highlighting comparisons between our results and those of in~\cite{Ray24b} for the rank $3$ case.
          
          \subsection{Comparisons with~\cite{Ray24b} for the rank $3$ case}
          In \cite{Ray24b}, Ray proves that Drinfeld $A$-modules for rank $r\geq 2$, in particular for $r=3$, with surjective $(T)$-adic Galois representation form a density $1$ subset. His argument relies on Hilbert irreducibility for function fields and yields an existential asymptotic result, but for a given triple $\vec{g}=(g_1,g_2,g_3)\in \mcW^{3}$, there is no explicit sufficient criterion to decide whether the corresponding Drinfeld $A$-modules for rank $3$ have surjective $(T)$-adic Galois representation.
          
          In this article, we have provided a sufficient, concrete and verifiable condition for rank $3$. We define an explicitly described subset $\mcS^{3}\subseteq \mcW^{3}$ using direct valuation conditions on the coordinates of $\vec{g}$, and prove that for every $\vec{g}\in\mcS^{3}$, the corresponding Drinfeld $A$-modules for rank $3$ has surjective $(T)$-adic Galois representation. Moreover, we compute the density of $\mcS^{3}$ as $(1-\frac{1}{q})^3$.
          

	\section{Drinfeld $A$-modules of rank $2$ with surjective $(T)$-adic Galois representations}

   For $\vec g=(g_1,g_2)\in \mcW^{2}$, let $\varphi^{\vec g}_T = T+g_1\tau+g_2\tau^2$ be the corresponding Drinfeld $A$-modules of rank $2$. In this section, we provide explicit sufficient conditions on the coefficients of $\varphi^{\vec g}_T$ under which the associated $(T)$-adic representation $\rho_{\varphi^{\vec g},(T)}$ is surjective. We show such Drinfeld $A$-modules have density $\big(1-\frac{1}{q}\big)^2.$
    
    Let $\mcS^{2}$ denotes the subset of $\mcW^{2}$ consisting of $\vec{g}=(g_1,g_2)$ such that
\begin{itemize}
\item $\nu_T(g_i)=0$ for $i=1,2$;
\item there exists $\mfl\in\Omega_A\setminus (T)$ such that $\nu_{\mfl}(g_1)=0$ and $p\nmid\nu_{\mfl}(g_2)$.
\end{itemize}
The main theorems of this section are as follows.
       \begin{thm}
       \label{T_adic_sur_rank_2}
           Let $q\geq 4$. For any $\vec{g}\in \mcS^{2}$, let $\varphi^{\vec{g}}$ be the corresponding Drinfeld $A$-module defined by $ \varphi^{\vec{g}}_{T}=T+g_1\tau+g_2\tau^2.$ Then, the $(T)$-adic Galois representation  ${\rho}_{\varphi^{\vec{g}},(T)}:G_{F}\longrightarrow \GL_{2}(A_{(T)})$     is surjective.
       \end{thm}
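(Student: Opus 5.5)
We follow the same strategy as for Theorem~\ref{T_adic_sur_rank_3} and verify the hypotheses of Proposition~\ref{Pink_R_cond_T_adic_sur} for $M:=\Ima(\rho_{\varphi^{\vec g},(T)})\subseteq \GL_2(A_{(T)})$. The residue field $\F_{(T)}=\F_q$ has $q\ge 4$ elements, as required. The condition $\nu_T(g_i)=0$ gives good reduction at $(T)$ with reduction height $1$, since $\overline{\varphi}_T=\bar g_1\tau+\bar g_2\tau^2$ with $\bar g_1\neq 0$. Then \cite[Proposition 2.3]{Ray24a} gives $\det(M)=A_{(T)}^\times$, exactly as in Proposition~\ref{det_of_T_adic_rep_sujective_3}.

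Next we study the inertia at the auxiliary prime $\mfl$. Since $\mfl\nmid g_1$ and $\mfl\mid g_2$, $\varphi^{\vec g}$ has stable reduction of rank $1$ at $\mfl$. Theorem~\ref{Drinfeld} then gives a Tate datum $(\psi,\Lambda)$ with $\psi$ of rank $1$ and good reduction, and $\Lambda$ of rank $1$. We repeat the proof of Lemma~\ref{Im_of_inertia_l_noteq_T}, with a generator $w$ of $\psi[T]$ (fixed by $I_\mfl$) and $z$ satisfying $\psi_T(z)=\lambda$. This shows that $\bar\rho_{\varphi^{\vec g},(T)}(I_\mfl)\subseteq\{\psmat{1}{*}{0}{c}\}$. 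We compare leading coefficients in \eqref{expression_of_varphi_mfl_in_terms_of_e_Lambda}, using the analogue of Lemma~\ref{valuation_of_e_Lambda}. Because $\nu'_\mfl(w)=0$, as the product of the roots of $\psi_T(x)/x$ is a unit, this gives $\nu'_\mfl(z)=-\nu_\mfl(g_2)/(q(q-1))$. Since $p\nmid\nu_\mfl(g_2)$, the ramification index of $F^{\un}_\mfl(e_\Lambda(z))/F^{\un}_\mfl$ is divisible by $q$. The Sylow argument then puts the full group $\{\psmat{1}{b}{0}{1}: b\in\F_q\}$ inside $\bar\rho_{\varphi^{\vec g},(T)}(I_\mfl)$. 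Running the same argument mod $T^2$, as in Proposition~\ref{non-scalar_mod_T_square}, gives $\psmat{1}{T}{0}{1}$ in $M \bmod T^2$. This matrix is non-scalar and congruent to the identity mod $T$.

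It remains to show that $\overline M=\GL_2(\F_q)$. Irreducibility of $\varphi^{\vec g}[T]$ follows as in Lemma~\ref{varphi_T_irreducible_F_T_G_F_module}: the reciprocal polynomial, divided by $T$, is $x^{q^2-1}+\frac{g_1}{T}x^{q^2-q}+\frac{g_2}{T}$, and it is Eisenstein at $1/T$. To apply Eisenstein, we need $\nu_{1/T}(g_2/T)=1-\deg g_2$ to be coprime to $q^2-1$ (or equal to $1$). We also need the middle coefficient to have positive valuation, and this does not hold automatically. So this step needs care. A cleaner route is to bypass irreducibility. $\overline M$ contains a subgroup of order $q$, namely a full Sylow $p$-subgroup of $\GL_2(\F_q)$. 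If $\overline M$ contained no $\SL_2$, Dickson's classification (or \cite[Lemma 16]{Che22}) would force $\overline M$ to be contained in a Borel subgroup, i.e.\ reducible, unless $\overline M$ has another unipotent element. An invariant line would give a character of $G_F$, and we rule it out using the determinant together with the height-$1$ reduction at $(T)$. The inertia $I_T$ acts on the connected part through the fundamental character of level $1$, and on the étale part trivially. So a reducible $\overline M$ would have a Galois-stable line on which $I_{\mfl}$ acts trivially and $I_T$ acts trivially or by the cyclotomic character. This must be reconciled with the unipotent element from $I_\mfl$, whose invariant line is $\langle e_\Lambda(w)\rangle$, and I expect this compatibility argument to be the main obstacle. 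Failing that, I would fall back on the Eisenstein irreducibility argument and a Dickson case analysis. The Borel case is excluded by irreducibility. Any other subgroup of $\GL_2(\F_q)$ with order divisible by $q$ (for $q\ge4$) contains $\SL_2(\F_q)$. Then $\det(\overline M)=\F_q^\times$ gives $\overline M=\GL_2(\F_q)$, and Proposition~\ref{Pink_R_cond_T_adic_sur} concludes.
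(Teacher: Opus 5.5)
\textbf{There is a genuine gap: irreducibility of $\varphi^{\vec g}[T]$.}
Your handling of the determinant, of the inertia at $\mfl$, and of the non-scalar element modulo $T^2$ agrees with the paper and is fine. (Your Tate-uniformization computation giving $\nu'_{\mfl}(z)=-\nu_{\mfl}(g_2)/(q(q-1))$ is the argument behind Zywina's Proposition 4.1, which the paper cites instead.) The problem is irreducibility: the ``cleaner route'' is never carried out, and the fallback is the Eisenstein argument that you rightly distrust. Eisenstein at $1/T$ for $x^{q^2-1}+\frac{g_1}{T}x^{q^2-q}+\frac{g_2}{T}$ needs $\deg g_1\le 0$ and $\deg g_2=0$. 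But $p\nmid\nu_{\mfl}(g_2)$ forces $\nu_{\mfl}(g_2)\ge 1$, hence $\deg g_2\ge 1$. This step is exactly how the paper argues (Lemma~\ref{varphi_T_irreducible_F_T_G_F_module}, reused in rank $2$), and it applies to no element of $\mcS^2$.

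\textbf{The statement is false as written, so the gap cannot be closed.}
Let $\mfl\neq T$ be any monic prime and set $\vec g=(-T-\mfl,\mfl)$. Then $\nu_T(g_1)=\nu_T(\mfl)=0$, $\nu_T(g_2)=0$, $\nu_{\mfl}(g_1)=\nu_{\mfl}(T)=0$ and $\nu_{\mfl}(g_2)=1$, so $\vec g\in\mcS^2$. Yet for every $c\in\F_q$ we have $\varphi^{\vec g}_T(c)=c\,(T+g_1+g_2)=0$. So $\F_q\subset\varphi^{\vec g}[T]$ is a line fixed pointwise by $G_F$. The image of $\bar\rho_{\varphi^{\vec g},(T)}$ therefore lies in $\psmat{1}{*}{0}{*}$, and $\rho_{\varphi^{\vec g},(T)}$ is not surjective. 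For $q$ even and $\mfl=T+1$ this is $\vec g=(1,T+1)$, an instance of the paper's own example $(1,\mfl)$.

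\textbf{Why your compatibility argument must fail.}
In this example the invariant line has trivial character, so every local constraint you intended to use is satisfied:
it is unramified at $(T)$, since it maps isomorphically onto the \'etale quotient there;
it is the line $\langle e_\Lambda(w)\rangle$ fixed by the unipotent elements from $I_{\mfl}$;
and the quotient character is just $\det\bar\rho_{\varphi^{\vec g},(T)}$.

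\textbf{What is missing.}
You need an additional hypothesis forcing $\varphi^{\vec g}_T(x)/x$ to be irreducible over $F$. One option is $(q+1)(\deg g_1-1)\le \deg g_2-1$ together with $\gcd(\deg g_2-1,q^2-1)=1$. This makes the Newton polygon at $1/T$ a single segment of slope $(\deg g_2-1)/(q^2-1)$. Under such a hypothesis the rest of your outline goes through: the full Sylow $p$-subgroup from $I_{\mfl}$, Zywina's Lemma A.1, the determinant, and Proposition~\ref{Pink_R_cond_T_adic_sur}.
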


        As before, the proof of Theorem~\ref{T_adic_sur_rank_2} relies on Proposition~\ref{Pink_R_cond_T_adic_sur}, hence it is enough to show that $M:=\Ima(\rho_{\varphi^{\vec{g}},(T)})$ satisfies the hypotheses of Proposition~\ref{Pink_R_cond_T_adic_sur}.  At first, arguing as in Proposition~\ref{det_of_T_adic_rep_sujective_3}, we get
        \begin{prop}\label{det_of_T_adic_rep_sujective_2}
           For $\vec{g} \in \mcS^2$, the determinant map
           $$\det\rho_{\varphi^{\vec{g}},(T)}:G_{F}\ra A_{(T)}^{\times}$$
           is surjective, i.e., $\det(M)=A_{(T)}^{\times}$.
        \end{prop}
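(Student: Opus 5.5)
The plan is to follow the proof of Proposition~\ref{det_of_T_adic_rep_sujective_3} essentially verbatim, replacing rank $3$ by rank $2$. The argument has two steps: first establish good reduction of $\varphi^{\vec g}$ at $(T)$ and compute its reduction height there, then invoke \cite[Proposition 2.3]{Ray24a}. That proposition asserts that if a Drinfeld $A$-module has good reduction at $(T)$ with reduction height $1$, then the determinant of the $(T)$-adic representation is surjective onto $A_{(T)}^{\times}$.

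\textbf{Step 1: good reduction at $(T)$.} Since $\vec g=(g_1,g_2)\in\mcS^2$, we have $\nu_T(g_1)=\nu_T(g_2)=0$. The coefficients of $\varphi^{\vec g}_T=T+g_1\tau+g_2\tau^2$ lie in $A\subseteq A_{(T)}$, so $\varphi^{\vec g}$ is already a Drinfeld $A_{(T)}$-module. Its reduction modulo $(T)$ is
$$\overline{\varphi}^{\vec g}_T=\bar g_1\tau+\bar g_2\tau^2\in\F_q\{\tau\}.$$
Because $\bar g_2\neq 0$, this is a Drinfeld $\F_{(T)}$-module of rank $2$, so $\varphi^{\vec g}$ has good reduction at $(T)$. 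No isomorphism or rescaling is needed.

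\textbf{Step 2: reduction height.} The reduction has constant term $0$, and its $\tau$-coefficient $\bar g_1$ is nonzero because $\nu_T(g_1)=0$. Hence $\mathrm{ht}_\tau(\overline{\varphi}^{\vec g}_T)=1$. Since $\deg_T(T)=1$, this gives $H_{(T)}=1$.

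\textbf{Step 3: conclusion.} Apply \cite[Proposition 2.3]{Ray24a} to conclude $\det(M)=A_{(T)}^{\times}$. The only point I expect to require care is confirming that this cited proposition is stated for arbitrary rank $r$, or at least covers $r=2$. Its use in the rank $3$ case suggests it is stated in general. Heuristically, it is the Carlitz-module mechanism: $\det\rho_{\varphi,(T)}$ agrees, up to an unramified twist, with the $(T)$-adic representation of a rank-one module, and height $1$ makes it totally ramified at $(T)$ with full image. If the rank-$2$ case were not covered, I would instead argue directly via the exterior power $\wedge^2\varphi^{\vec g}$. That rank-$1$ module has leading coefficient $-g_2$ (up to sign), a unit at $(T)$, so it has good reduction of height $1$ at $(T)$. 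Lubin--Tate theory over $F_{(T)}$ then shows that inertia at $(T)$ surjects onto $A_{(T)}^{\times}$.
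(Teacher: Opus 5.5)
Your proposal is correct and is the same argument the paper gives: $\nu_T(g_1)=\nu_T(g_2)=0$ yields good reduction at $(T)$ with reduction height $1$, and \cite[Proposition 2.3]{Ray24a} then gives $\det(M)=A_{(T)}^{\times}$, exactly as in the rank-$3$ case (Proposition~\ref{det_of_T_adic_rep_sujective_3}). Your fallback via $\wedge^2\varphi^{\vec g}$ and Lubin--Tate theory is also sound, but it is not needed.
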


      Since $\vec{g}\in \mcS^{2}$, there exists $\mfl\in\Omega_A\setminus (T)$ such that $\nu_{\mfl}(g_1)=0$ and $p\nmid\nu_{\mfl}(g_2)$.  To verify that $M$ satisfies the remaining hypotheses of Proposition~\ref{Pink_R_cond_T_adic_sur}, we first try to analyze the subgroup $\bar{\rho}_{\varphi^{\vec{g}}, (T)}(I_{\mfl})$ of $\overline{M}:=\Ima(\bar{\rho}_{\varphi^{\vec{g}}, (T)})\subseteq\GL_{2}(A/(T))$.
      
     \begin{lem}\label{rank_2_A_nice_subgroup_of_bar_rho_varphi_vec_g_T_of_inertia_at_mfl}
		For $\vec{g} \in \mcS^2$, there exists a basis of $\varphi^{\vec{g}}[T]$ such that 
        $$
			\left\{ \begin{pmatrix} 1 & b_{\sigma,1} \\ 0 & 1 \end{pmatrix} \; \middle| \; b_{\sigma,1} \in A/(T)\right\}\subseteq\bar{\rho}_{\varphi^{\vec{g}}, (T)}(I_\mfl).
		$$
        In particular, $|A/(T)|=q$ divides $|\overline{M}|$.
	\end{lem}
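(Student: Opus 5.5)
I will mirror the rank-$3$ argument of Lemma~\ref{Im_of_inertia_l_noteq_T} and Proposition~\ref{rank_3_A_nice_subgroup_of_bar_rho_varphi_vec_g_T_of_inertia_at_mfl}, now with rank $1$ in place of rank $2$ for the good-reduction part.

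\textbf{Step 1: stable reduction and the shape of inertia.} Since $\nu_{\mfl}(g_1)=0$ and $\nu_{\mfl}(g_2)>0$ (as $p\nmid\nu_{\mfl}(g_2)$ forces $\nu_{\mfl}(g_2)\neq 0$, and $g_2\in A$), the module $\varphi^{\vec g}$ has stable reduction of rank $1$ at $\mfl$. By Theorem~\ref{Drinfeld} we obtain a Tate datum $(\psi,\Lambda)$ with $\psi$ of rank $1$ with good reduction at $\mfl$ and $\Lambda=A\lambda$ of rank $1$. Then:
\begin{itemize}
\item $\psi[T]=\F_q w_1$, and $I_{\mfl}$ fixes $w_1$ because $\mfl\neq(T)$.
\item Choose $z$ with $\psi_T(z)=\lambda$. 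As in Lemma~\ref{Im_of_inertia_l_noteq_T}, $\sigma(z)=b_{\sigma,1}w_1+\chi_\Lambda(\sigma)z$.
\end{itemize}
So, via \eqref{iso_drinfeld_datum}, in the basis $\{e_\Lambda(w_1),e_\Lambda(z)\}$ the group $\bar\rho_{\varphi^{\vec g},(T)}(I_{\mfl})$ lies in the upper triangular matrices $\psmat{1}{*}{0}{c}$ with $c\in\F_q^\times$.

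\textbf{Step 2: valuation of $z$.} First I will establish the rank-$2$ analogue of Lemma~\ref{valuation_of_e_Lambda}:
\begin{itemize}
\item $\nu'_{\mfl}(e_\Lambda(z))=\nu'_{\mfl}(z)$;
\item $\nu'_{\mfl}(e_\Lambda(a w_1+bz))$ equals $\nu'_{\mfl}(z)$ if $b\neq0$, and equals $\nu'_{\mfl}(aw_1)$ if $b=0$.
\end{itemize}
The proof follows \cite[Lemmas 4.2, 4.3]{KS25}: $z$ has negative valuation, $w_1$ is integral, and the terms $u_n x^{q^n}$ with $u_n\in\mfl A_{\mfl}$ do not dominate for these arguments.

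Next, compare leading coefficients in \eqref{expression_of_varphi_mfl_in_terms_of_e_Lambda} for $a=T$. This gives
$$\nu_{\mfl}(g_2)=-q(q-1)\nu'_{\mfl}(z)-\nu'_{\mfl}(s),\qquad s=\prod_{a\in\F_q^\times}aw_1 .$$
Here $s$ is, up to sign, the constant term of $\psi_T(x)/x$, namely $T$, which is an $\mfl$-unit. Hence
$$\nu'_{\mfl}(z)=-\frac{\nu_{\mfl}(g_2)}{q(q-1)}.$$
Since $p\nmid\nu_{\mfl}(g_2)$, the ramification index of $F_{\mfl}^{\un}(e_\Lambda(z))/F_{\mfl}^{\un}$ is divisible by $q$. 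Since $F^{\un}_{\mfl}(\varphi^{\vec g}[T])=F^{\un}_{\mfl}(e_\Lambda(z))$, this gives $q\mid|\bar\rho_{\varphi^{\vec g},(T)}(I_{\mfl})|$.

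\textbf{Step 3: conclude.} By Step 1, the image of inertia has order dividing $q(q-1)$, so by Step 2 its order is $qs$ with $s\mid q-1$. Take a Sylow $p$-subgroup $P$, of order $q$. For any element $\psmat{1}{b}{0}{c}\in P$ we have $c^q=1$ and $c\in\F_q^\times$, so $c=1$. Thus $P$ consists of matrices $\psmat{1}{b}{0}{1}$, and since $|P|=q$, $P$ is the entire unipotent group $\{\psmat{1}{b}{0}{1}:b\in A/(T)\}$. This gives the claimed inclusion, and hence $q\mid|\overline M|$.

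\textbf{Main obstacle.} The delicate point is the valuation lemma in Step 2, specifically checking that $e_\Lambda$ preserves the valuation of $z$ and of $aw_1+bz$ in this rank-$1$ setting. I would handle it exactly as in \cite{KS25}, using the Newton polygon of $\psi_T(x)-\lambda$ together with $\nu_{\mfl}(\lambda)<0$.
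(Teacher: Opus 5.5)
Your argument is correct, but it takes a different route from the paper's proof of this lemma. The paper does not redo the Tate-uniformization analysis in rank $2$. It quotes \cite[Proposition 4.1]{Zyw11} for two facts: the shape $\psmat{1}{*}{0}{*}$ of $\bar{\rho}_{\varphi^{\vec g},(T)}(I_{\mfl})$, and the bound $|\bar{\rho}_{\varphi^{\vec g},(T)}(I_{\mfl})|\ge e_{\varphi^{\vec g}}\ge q$, where $e_{\varphi^{\vec g}}$ is the order of $\frac{1}{q(q-1)}\nu_{\mfl}(g_1^{q+1}/g_2)$ in $\Q/\Z$. It then runs the same Sylow argument as your Step 3.

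You instead transplant the paper's own rank-$3$ machinery (Lemma~\ref{Im_of_inertia_l_noteq_T}, Lemma~\ref{valuation_of_e_Lambda}, Proposition~\ref{rank_3_A_nice_subgroup_of_bar_rho_varphi_vec_g_T_of_inertia_at_mfl}), with $\psi$ now of rank $1$. Your value $\nu'_{\mfl}(z)=-\nu_{\mfl}(g_2)/(q(q-1))$ is exactly Zywina's invariant when $\nu_{\mfl}(g_1)=0$, so both routes rest on the same computation.

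The citation buys brevity. Your version is self-contained and has two concrete advantages:
\begin{enumerate}
\item It gives the divisibility $q\mid|\bar{\rho}_{\varphi^{\vec g},(T)}(I_{\mfl})|$ through the ramification index. Divisibility is what the Sylow step actually uses; the paper only records an inequality at this point.
\item It shows exactly where $\mfl\neq(T)$ enters: unramifiedness of $\psi[T]$, and $\nu'_{\mfl}(s)=0$. So it transfers verbatim to $(T^2)$ in Proposition~\ref{non-scalar_mod_T_square_2}, without the caveat the paper must add about Zywina's hypothesis $\mfl\nmid\mfa$.
\end{enumerate}

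When you write out the valuation lemma, state the one fact that makes it work. Good reduction of $\psi$ gives $\nu'_{\mfl}(\psi_a(\lambda))=q^{\deg_T a}\,\nu'_{\mfl}(\lambda)$. Hence every nonzero $\lambda'\in\Lambda$ satisfies $\nu'_{\mfl}(\lambda')\le\nu'_{\mfl}(\lambda)<\nu'_{\mfl}(\lambda)/q=\nu'_{\mfl}(z)$. Therefore each factor $1-x/\lambda'$ in the product for $e_\Lambda(x)$ is a unit when $x=aw_1+bz$.

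A minor correction: strictly, $s=-w_1^{q-1}=T/c$, where $c\in A_{\mfl}^{\times}$ is the leading coefficient of $\psi_T$, not $\pm T$. Its valuation is $0$ either way, so nothing changes.
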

    \begin{proof}
        The Drinfeld $A$-module $\varphi^{\vec{g}}$ has stable reduction of rank $1$ at $\mfl$, since $\mfl\mid g_2$ and $\mfl\nmid g_1$. Therefore, by  \cite[Proposition 4.1]{Zyw11}, there exists a basis of $\varphi^{\vec{g}}[T]$ such that  
        $$\bar{\rho}_{\varphi^{\vec{g}},(T)}(I_\mfl)\subseteq \left\{
			\psmat{1}{b_{\sigma,1}}{0}{b_{\sigma,2}} \; \middle| \; b_{\sigma,1} \in A/(T), b_{\sigma,2}\in \F_{q}^{\times}\right\}$$
        and $|\bar{\rho}_{\varphi^{\vec{g}}, (T)}(I_\mfl)|\geq e_{\varphi^{\vec{g}}}$ where $e_{\varphi^{\vec{g}}}$ is the order of $\frac{1}{(q-1)q}\nu_{\mfl}\Big(\frac{g_1^{q+1}}{g_2}\Big)+\mathbb{Z}$ in $\mathbb{Q}/\mathbb{Z}$. Since $\nu_{\mfl}(g_1)=0$,
        and $p\nmid \nu_{\mfl}(g_2)$, we get $e_{\varphi^{\vec{g}}}\geq q$, and hence
        $|\bar{\rho}_{\varphi^{\vec{g}}, (T)}(I_\mfl)|\geq q$. Now, arguing as in the proof of Proposition~\ref{rank_3_A_nice_subgroup_of_bar_rho_varphi_vec_g_T_of_inertia_at_mfl}, 
        the claim follows.
    \end{proof}

In~\cite[Proposition 4.1]{Zyw11}, the description of $\bar{\rho}_{\varphi,\mfa}(I_{\mfl})$ was stated for any arbitrary non-zero ideal $\mfa\subseteq A$. However, a closer inspection of the proof shows that~\cite[Proposition 4.1]{Zyw11} requires an additional assumption that $\mfl\nmid\mfa$. In particular, for $\mfa=(T^2)$, we have the following.


    \begin{prop}
    \label{non-scalar_mod_T_square_2}
        For $\vec{g} \in \mcS^2$, the image of $\bar{\rho}_{{\varphi^{\vec{g}},(T^2)}}$, i.e., $M$ mod $(T^2)$, contains a non-scalar element that is congruent to the identity modulo $(T)$.
    \end{prop}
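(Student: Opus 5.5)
We will mirror the proof of Proposition~\ref{non-scalar_mod_T_square} and rerun the argument of Lemma~\ref{rank_2_A_nice_subgroup_of_bar_rho_varphi_vec_g_T_of_inertia_at_mfl} with $\mfa=(T^2)$ in place of $(T)$. This is legitimate because $\mfl\neq(T)$, so $\mfl\nmid T^2$; by the remark preceding the statement, \cite[Proposition 4.1]{Zyw11} then applies. The argument is the rank-$2$ specialisation of the Tate-datum argument in the proof of Proposition~\ref{rank_3_A_nice_subgroup_of_bar_rho_varphi_vec_g_T_of_inertia_at_mfl}.

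\textbf{Shape of the inertia image.} Since $\mfl\mid g_2$ and $\mfl\nmid g_1$, $\varphi^{\vec g}$ has stable reduction of rank $1$ at $\mfl$. Let $(\psi,\Lambda)$ be the corresponding Tate datum, with $\psi$ of rank $1$ and $\Lambda=A\lambda$.
\begin{itemize}
\item Choose a generator $w$ of the $A/(T^2)$-module $\psi[T^2]$; since $\psi$ has good reduction at $\mfl$ and $\mfl\nmid T^2$, $I_\mfl$ fixes $w$.
\item Choose $z$ with $\psi_{T^2}(z)=\lambda$.
\item Exactly as in Lemma~\ref{Im_of_inertia_l_noteq_T}, for $\sigma\in I_\mfl$ one gets $\sigma(z)=\chi_\Lambda(\sigma)z+b_\sigma w$ with $b_\sigma\in A/(T^2)$.
\end{itemize}
Hence, in the basis $\{e_\Lambda(w),e_\Lambda(z)\}$ of $\varphi^{\vec g}[T^2]$, we have $\bar\rho_{\varphi^{\vec g},(T^2)}(I_\mfl)\subseteq\bigl\{\psmat{1}{b}{0}{c}: b\in A/(T^2),\ c\in\F_q^\times\bigr\}$. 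Reducing this basis modulo $T$ (apply $\varphi^{\vec g}_T$, equivalently take $T\cdot$) gives a compatible basis of $\varphi^{\vec g}[T]$.

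\textbf{Size of the image.} Compare leading coefficients in \eqref{expression_of_varphi_mfl_in_terms_of_e_Lambda} for $a=T^2$; the leading coefficient of $\varphi^{\vec g}_{T^2}$ is $g_2^{1+q^2}$. Together with the valuation lemma (the rank-$2$ analogue of Lemma~\ref{valuation_of_e_Lambda}), this gives
$$\nu_\mfl'(z)=-\frac{(1+q^2)\,\nu_\mfl(g_2)}{q^2(q^2-1)}.$$
Since $p\nmid(1+q^2)\nu_\mfl(g_2)$, the ramification index of $F_\mfl^{\un}(e_\Lambda(z))/F_\mfl^{\un}$ is divisible by $q^2$, so $q^2$ divides $|\bar\rho_{\varphi^{\vec g},(T^2)}(I_\mfl)|$.

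\textbf{Extracting a unipotent element.} The image has order $q^2 s$ with $s\mid(q-1)$. The Sylow argument of Proposition~\ref{rank_3_A_nice_subgroup_of_bar_rho_varphi_vec_g_T_of_inertia_at_mfl} then shows that the subgroup $\bigl\{\psmat{1}{b}{0}{1}: b\in A/(T^2)\bigr\}$, which has order $q^2$, is contained in the image. In particular $\psmat{1}{T}{0}{1}$ lies in $M \bmod (T^2)$. This element is congruent to the identity modulo $(T)$ and is non-scalar, which proves the claim.

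\textbf{Main obstacle.} The delicate step is the valuation bookkeeping at level $T^2$: extending Lemma~\ref{valuation_of_e_Lambda} to $\psi_{T^2}^{-1}(\Lambda)/\Lambda$, and checking that the $b\neq0$ classes all have valuation equal to $\nu'_\mfl(bz)$ with the correct multiplicities. One can avoid redoing this by quoting \cite[Proposition 4.1]{Zyw11} directly for $\mfa=(T^2)$ (valid since $\mfl\nmid T^2$), which provides both the upper-triangular shape and the lower bound by the order $e$ of $\frac{1}{(q-1)q}\nu_\mfl\bigl(g_1^{q+1}/g_2\bigr)+\Z$ in $\Q/\Z$ at each level. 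A bound of $q$ alone yields only a subgroup of order $q$ of unipotents in $A/(T^2)$, which is not enough. In that case I would instead use that the image surjects onto the mod-$T$ unipotent group of Lemma~\ref{rank_2_A_nice_subgroup_of_bar_rho_varphi_vec_g_T_of_inertia_at_mfl}. The $p$-power of a lift $\psmat{1}{1+Tc}{0}{1}$ is the identity, so a cleaner route is to use the $T^2$-level ramification count above and conclude as described.
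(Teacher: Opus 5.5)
Your route is the paper's own. Its proof of this proposition just asserts that the inertia argument of Lemma~\ref{rank_2_A_nice_subgroup_of_bar_rho_varphi_vec_g_T_of_inertia_at_mfl} works at level $(T^2)$ because $\mfl\nmid T^2$; you carry this out explicitly with the Tate datum $(\psi,\Lambda)$, the upper-triangular shape of the inertia image, and the Sylow step. You also correctly isolate what actually changes at level $(T^2)$: you need $q^2$ to divide $|\bar{\rho}_{\varphi^{\vec{g}},(T^2)}(I_{\mfl})|$. A bound of $q$, as given by the $e_{\varphi^{\vec g}}$ quoted in that lemma, would only give $q$ unipotents, which could be a complement to the kernel of reduction mod $T$. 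The paper leaves this point implicit. As you note, lifting mod-$T$ unipotents does not help, so everything rests on the level-$(T^2)$ ramification count.

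That count is wrong as written, though the error is easily repaired. You treat all $q^2(q^2-1)$ classes $\psi_a(w)+\psi_b(z)$ with $b\neq 0$ as having valuation $\nu_{\mfl}^{\prime}(z)$, which is what produces $-\frac{(1+q^2)\nu_{\mfl}(g_2)}{q^2(q^2-1)}$. Write $b=b_0+b_1T$ with $b_0,b_1\in\F_q$, so that $\psi_b(z)=b_0z+b_1\psi_T(z)$. Since $\psi_T=T+c\tau$ with $c$ a unit and $\nu_{\mfl}^{\prime}(z)<0$, you get $\nu_{\mfl}^{\prime}(\psi_T(z))=q\,\nu_{\mfl}^{\prime}(z)<\nu_{\mfl}^{\prime}(z)$. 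Hence:
\begin{itemize}
\item only the $q^2(q-1)$ classes with $b\in\F_q^{\times}$ have valuation $\nu_{\mfl}^{\prime}(z)$;
\item the $q^3(q-1)$ classes with $b_1\neq0$ have valuation $q\,\nu_{\mfl}^{\prime}(z)$.
\end{itemize}
All of these representatives have valuation strictly above $\nu_{\mfl}^{\prime}(\lambda)=q^2\nu_{\mfl}^{\prime}(z)$, so $e_\Lambda$ preserves their valuations. The correct identity is $(1+q^2)\nu_{\mfl}(g_2)=-q^2(q-1)(1+q^2)\,\nu_{\mfl}^{\prime}(z)$, that is, $\nu_{\mfl}^{\prime}(z)=-\nu_{\mfl}(g_2)/(q^2(q-1))$.

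More directly: $z_1:=\psi_T(z)$ satisfies $\psi_T(z_1)=\lambda$. The level-$(T)$ computation gives $\nu_{\mfl}^{\prime}(z_1)=-\nu_{\mfl}(g_2)/(q(q-1))$, and then $\nu_{\mfl}^{\prime}(z)=\nu_{\mfl}^{\prime}(z_1)/q$.

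Since $p\nmid\nu_{\mfl}(g_2)$, the $p$-part of the denominator is still exactly $q^2$. So your conclusion $q^2\mid e[F_{\mfl}^{\un}(e_\Lambda(z)):F_{\mfl}^{\un}]$ and the Sylow step after it are unaffected, and with this correction the proof is complete.
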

    \begin{proof}
        The preceding argument shows that the proof of Lemma~\ref{rank_2_A_nice_subgroup_of_bar_rho_varphi_vec_g_T_of_inertia_at_mfl} applies to any non-zero ideal $\mfa\subseteq A$ that is prime to $\mfl$. Therefore, taking $\mfa=(T^2)$ gives
        $$ \left\{
			\psmat{1}{b_{\sigma,1}}{0}{1}\; \middle| \; b_{\sigma,1} \in A/(T^2)\right\}\subseteq\bar{\rho}_{\varphi^{\vec{g}}, (T^2)}(I_\mfl),$$
        which proves the claim.
    \end{proof}
    

    The only remaining hypothesis to verify in Proposition~\ref{Pink_R_cond_T_adic_sur} is that the representation $\bar{\rho}_{\varphi,T}$ is surjective, i.e., $M\equiv\GL_{2}(\F_{(T)})\pmod{T}$. By Proposition~\ref{det_of_T_adic_rep_sujective_2}, we get $\det(\overline{M})=\F_{(T)}^{\times}$. The required claim follows if we can show that $\SL_{2}(\F_{(T)})\subseteq \overline{M}$, which can be achieved by~\cite[Lemma A.1]{Zyw11}.

        \begin{lem}
        \label{zyw_sl_{2}_F}
		Let $\F$ be a finite field and $G$ be a subgroup of $\GL_2(\F)$ such that
		$G$ contains a Sylow $p$-subgroup of $\GL_2(\F)$, and $G$ acts irreducibly on $\F^2$, then $G$ contains $\SL_{2}(\F)$. 
     	\end{lem}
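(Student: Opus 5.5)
The plan is to pin down two Sylow $p$-subgroups of $G$ concretely as full unipotent groups fixing two distinct lines, and then use the classical fact that upper and lower elementary matrices generate $\SL_2(\F)$. Write $p$ for the characteristic of $\F$. Note that $|\GL_2(\F)|=|\F|(|\F|-1)(|\F|^2-1)$, so a Sylow $p$-subgroup has order $|\F|$. The upper unitriangular group
$$U=\left\{\psmat{1}{b}{0}{1} \;\middle|\; b\in\F\right\}$$
has exactly this order, so it is a Sylow $p$-subgroup. By Sylow's theorem, the hypothesis gives $gUg^{-1}\subseteq G$ for some $g\in\GL_2(\F)$.

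First I will record the key geometric fact. For $b\neq 0$, the only eigenvector line of $\psmat{1}{b}{0}{1}$ is $\F e_1$. Hence $U$ fixes exactly one line, namely $\F e_1$, and $gUg^{-1}$ fixes exactly one line, $L:=g\F e_1$.

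Next I use irreducibility. Since $G$ acts irreducibly on $\F^2$, $G$ does not stabilize $L$. So there is $h\in G$ with $L':=hL\neq L$. Then $hgUg^{-1}h^{-1}\subseteq G$ is the full unipotent group fixing $L'$. Choose $k\in\GL_2(\F)$ with $kL=\F e_1$ and $kL'=\F e_2$; this is possible since $L\neq L'$. Then $kGk^{-1}$ contains two subgroups of order $|\F|$ consisting of unipotent elements, fixing $\F e_1$ and $\F e_2$ respectively.

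I then need to check that these subgroups are exactly $U$ and the lower unitriangular group $U^-$. A unipotent element fixing $\F e_1$ pointwise-up-to-eigenvalue $1$ is upper unitriangular, so the first subgroup is contained in $U$. Since both have order $|\F|$, it equals $U$. The second subgroup equals $U^-$ by the same argument. Thus $kGk^{-1}\supseteq\langle U,U^-\rangle$.

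Finally I invoke the standard fact that $\SL_2(\F)$ over a field is generated by elementary matrices. Explicitly, any $\psmat{a}{b}{c}{d}$ of determinant $1$ can be reduced to the identity by elementary row operations, using $\psmat{0}{1}{-1}{0}=\psmat{1}{1}{0}{1}\psmat{1}{0}{-1}{1}\psmat{1}{1}{0}{1}$ to handle the case $c=0$. This gives $\SL_2(\F)\subseteq kGk^{-1}$. Since $\SL_2(\F)$ is normal in $\GL_2(\F)$, conjugating back by $k$ yields $\SL_2(\F)\subseteq G$.

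The only mildly delicate step is the identification of the conjugated Sylow subgroups with the full groups $U$ and $U^-$; the order count settles it, and the rest is routine.
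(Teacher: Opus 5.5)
Your proof is correct. The paper gives no argument of its own for this lemma: it quotes Lemma A.1 of Zywina [Zyw11] and uses it as a black box, so there is no in-paper proof to match.

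What you supply is a self-contained version of the standard argument:
\begin{itemize}
\item Sylow conjugacy puts a full unipotent group $gUg^{-1}$ inside $G$, and this group has a unique fixed line $L$.
\item Irreducibility gives some $h\in G$ with $hL\neq L$, and hence a second full unipotent group in $G$ fixing $hL$.
\item After a change of basis, the order count identifies these two groups with exactly $U$ and $U^-$.
\item Finally, $\langle U,U^-\rangle=\SL_2(\F)$.
\end{itemize}

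Compared with the citation, your route shows that only three facts are needed: Sylow's theorem, the fact that a nontrivial unipotent element of $\GL_2(\F)$ has a single eigenline, and generation of $\SL_2(\F)$ by elementary matrices. It also imposes no size condition on $\F$; the lemma holds for $\F_2$ and $\F_3$ as well. In the paper's application your first step is even unnecessary, because Lemma~\ref{rank_2_A_nice_subgroup_of_bar_rho_varphi_vec_g_T_of_inertia_at_mfl} already places the full upper unitriangular group inside $\overline{M}$.

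One cosmetic remark: the generation step does not need $\psmat{0}{1}{-1}{0}$. If $c=0$ then $a\neq 0$, and left multiplication by $\psmat{1}{0}{1}{1}\in U^-$ makes the lower-left entry nonzero, after which the usual reduction applies.
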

       By Lemma~\ref{rank_2_A_nice_subgroup_of_bar_rho_varphi_vec_g_T_of_inertia_at_mfl}, the group $\overline{M}$ contains the Sylow $p$-subgroup, the 
       full group of unipotent upper triangular matrices in $\GL_2(\F_{q})$. Now, arguing as in Lemma~\ref{varphi_T_irreducible_F_T_G_F_module},  $\overline{M}$ acts irreducibly on $\F_{q}^2 \cong \varphi^{\vec{g}}[T]$, i.e., the $\F_{(T)}[G_{F}]$-module $\varphi^{\vec{g}}[T]$ is irreducible. Hence, by Lemma~\ref{zyw_sl_{2}_F}, we get $\SL_{2}(\F_{q})\subseteq \overline{M}$. Therefore, we get
       $\overline{M} =\GL_{2}(\F_{q})$. Since $M=\Ima(\rho_{\varphi^{\vec{g}},(T)})$ satisfies the hypotheses of Proposition~\ref{Pink_R_cond_T_adic_sur},  we complete the  proof of Theorem~\ref{T_adic_sur_rank_2}.

       We now compute the density of $\mcS^{2}$. 

       \begin{thm}
       \label{density_rank_2}
            Assume that $q\geq 4$. The density $\mathfrak{d}(\mcS^{2})$ of $\mcS^{2}$ is $\big(1-\frac{1}{q}\big)^2.$
        \end{thm}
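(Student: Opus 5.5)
We follow the proof of Theorem~\ref{density_rank_3} almost verbatim. Define
$$\mcS^{2}_{1}:=\left\{\vec{g}=(g_1,g_2)\in \mcW^{2}\;\middle|\;\nu_{T}(g_1)=\nu_T(g_2)=0\right\}$$
and
$$\mcS^{2}_{2}:=\left\{\vec{g}=(g_1,g_2)\in \mcW^{2}\;\middle|\;\text{there exists } \mfl\in\Omega_A\setminus(T) \text{ with } \nu_{\mfl}(g_1)=0 \text{ and } p\nmid \nu_{\mfl}(g_2)\right\}.$$
Then $\mcS^{2}=\mcS^{2}_{1}\cap\mcS^{2}_{2}$ by definition. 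The plan has two steps. First, show that intersecting with $\mcS^{2}_{2}$ does not change the density, i.e.\ $\mathfrak{d}(\mcS^{2}_1\cap\mcS^{2}_2)=\mathfrak{d}(\mcS^{2}_1)$. Second, compute $\mathfrak{d}(\mcS^{2}_1)$ by a direct count.

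For the first step we invoke \cite[Proposition 6.1]{Ray24b} with $r=2$, exactly as in the rank $3$ case. That result says the complement of $\mcS^{2}_{2}$ in $\mcW^{2}$ has density $0$. Concretely, pairs $(g_1,g_2)$ for which no prime $\mfl\ne(T)$ satisfies $\mfl\nmid g_1$ and $p\nmid\nu_{\mfl}(g_2)$ form a negligible set. Heuristically, $g_2$ must then be essentially a $p$-th power times a product of primes dividing $g_1$ or equal to $T$, and such $g_2$ are rare among polynomials of degree $<N$. Since $\mcS^{2}_1\setminus\mcS^{2}\subseteq \mcW^2\setminus\mcS^{2}_2$, the densities of $\mcS^{2}$ and $\mcS^{2}_1$ agree, provided the latter exists.

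This first step is the only non-formal point. If \cite[Proposition 6.1]{Ray24b} were stated only in a form tailored to rank $3$ or to the coefficient pair $(g_{r-1},g_r)$, we would check that its proof applies verbatim to $(g_1,g_2)$ when $r=2$. That proof is a sieve over primes $\mfl$ of growing degree. For each such $\mfl$, the conditions $\mfl\nmid g_1$ and $\mfl\,\|\,g_2$ hold with local probability roughly $(1-|\mfl|^{-1})\,|\mfl|^{-1}(1-|\mfl|^{-1})$. Because $\sum_{\mfl}|\mfl|^{-1}$ diverges, the probability that these conditions fail for every $\mfl$ up to degree $D$ tends to $0$ as $D\to\infty$, uniformly once $N\gg D$ by the Chinese remainder theorem. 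The hypothesis $q\ge4$ plays no role here; it is needed only for Theorem~\ref{T_adic_sur_rank_2}.

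For the second step, note that among the $q^N$ polynomials $g$ with $\deg_T(g)<N$, exactly $q^{N-1}$ are divisible by $T$. Hence exactly $q^N(1-\frac1q)$ satisfy $\nu_T(g)=0$, and every such $g$ is automatically non-zero, so the condition $g_2\ne0$ defining $\mcW^{2}$ is automatic. Therefore $|\mcS^{2}_1(N)|=q^{2N}(1-\frac1q)^2$, while $|\mcW^{2}(N)|=q^{N}(q^N-1)$. The ratio is
$$\frac{q^{2N}\left(1-\frac{1}{q}\right)^2}{q^{N}(q^{N}-1)},$$
which tends to $\left(1-\frac1q\right)^2$ as $N\to\infty$. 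Thus the density exists and equals $\left(1-\frac1q\right)^2$.
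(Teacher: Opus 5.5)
Your proposal is correct and matches the paper's proof step for step: the same splitting $\mcS^{2}=\mcS^{2}_1\cap\mcS^{2}_2$, the same appeal to \cite[Proposition 6.1]{Ray24b} to discard the condition defining $\mcS^{2}_2$, and the same count giving $\left(1-\frac1q\right)^2$. Your extra CRT sieve sketch is a sound self-contained justification of the cited density-zero step, which the paper only quotes. It uses the local probability $(1-|\mfl|^{-1})^2|\mfl|^{-1}$ together with $\sum_{\mfl}|\mfl|^{-1}=\infty$.
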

      \begin{proof}
           Define 
           $$\mcS^{2}_{1}:=\left\{\vec{g}=(g_1,g_2)\in \mcW^{2}\; \middle| \;\nu_{T}(g_i)=0\ \text{for}\ i=1,2\right\}, \quad \text{and}$$ 
           $$\mcS^{2}_{2}:= \left\{\vec{g}=(g_1,g_2)\in \mcW^{2} \; \middle| \;
	\begin{array}{l}
		\text{there exists}\ \mfl\in \Omega_{A}\setminus{(T)} \text{ such that}  \\
		\nu_{\mfl}(g_1)=0\ \text{and}\ p\nmid\nu_{\mfl}(g_2)
	\end{array}
	\right\}.$$
 Clearly, $\mcS^{2} =\mcS^{2}_{1}\cap\mcS^{2}_{2}$. By~\cite[Proposition 6.1]{Ray24b}, we get
 $\mathfrak{d}(\mcS^{2})=\mathfrak{d}(\mcS^{2}_{1}\cap\mcS^{2}_{2})=\mathfrak{d}(\mcS^{2}_{1})$. Now, arguing as in the proof of Theorem~\ref{density_rank_3}, 
 we get $\mathfrak{d}(\mcS^{2}_{1})=\big(1-\frac{1}{q}\big)^2.$
         \end{proof}

    Finally, we conclude this article with some remarks highlighting comparisons between our results and those of~\cite{Ray24a}, \cite{Zyw25}.

    \subsection{Comparison with \cite{Ray24a}}
       \begin{itemize}
       \item Here, we work with $\F_q$ with $q \ge 4$,  while~\cite{Ray24a} assumes  $q\geq 5$ is odd.

       \item We allow an arbitrary prime $\mfl\in\Omega_A\setminus (T)$ with $\nu_{\mfl}(g_1)=0$ and $p\nmid\nu_{\mfl}(g_2)$, where as \cite{Ray24a} works with a specific degree-$1$ prime $(T-a_2)$ and with stronger valuation conditions, namely $\nu_{(T-a_2)}(g_1)=0$ and $\nu_{(T-a_2)}(g_2)=1$.
       
       \item Both our work and~\cite{Ray24a} assume the conditions $\nu_T(g_i)=0$ for $i=1,2$, which implies that the reduction height of $\varphi^{\vec{g}}$ at $(T)$ is $1$. In our approach, these same conditions are also sufficient to prove the irreducibility of the mod-$(T)$ representation $\bar{\rho}_{\varphi^{\vec{g}},(T)}$, by directly showing that $\varphi^{\vec{g}}_{T}(x)/x$ is irreducible over $F$. In contrast,~\cite[Proposition 3.5]{Ray24a} establishes the corresponding result under additional hypotheses on the coefficients. 

       \item We compute the density of $\mcS^{2}$ as $(1-\frac{1}{q})^2$. In contrast,~\cite[Theorem 1.1]{Ray24a} computes the lower density.
       \end{itemize}

       
    \subsection{Comparison with \cite{Zyw25}}
    In the proof of~\cite[Theorem 9.1]{Zyw25}, Zywina constructs four subsets 
$\mathcal{R}$, $\mathcal{S}_m$, $\mathcal{T}_m$, and $\mathcal{U}_m$ of $A^2$ for $m\ge 2$ (see~\cite[p.~22]{Zyw25}), and proves that if $\vec{g} \in  \mathcal{R}\cap\mathcal{S}_m\cap\mathcal{T}_m\cap\mathcal{U}_m$, then the corresponding Drinfeld $A$-module $\varphi^{\vec{g}}$ has surjective $\mfp$-adic Galois representation  for all $\mfp\in\Omega_A$. Based on the definitions of these four sets, it appears that determining whether a given vector $\vec{g}  \in R \cap S_m \cap T_m \cap U_m$ cannot, in general, be characterized by a simple intrinsic valuation criterion depending solely on $(g_1, g_2)$.

 
 However, for $(T)$-adic surjectivity, we provide a concrete and purely coordinate-wise sufficient condition that guarantees $(T)$-adic surjectivity, and we have shown that such Drinfeld $A$-modules have density $(1-\frac{1}{q})^2$.
We conclude the article with a remark that our sufficient criteria yield additional examples of Drinfeld $A$-modules that were not covered in~\cite{Zyw25}. For example, 
\begin{itemize}
    \item  $\vec{g}=(1,\mfl)$ lies in $\mcS^{2} \setminus \mathcal{R}$,
    \item If  $\mathfrak p\in\Omega_A\setminus\{\mfl,(T)\}$ with $\deg_T(\mathfrak p)>m$, then $\vec{g}=(\mathfrak p,\mfl\mathfrak p) \in \mcS^{2} \setminus \mathcal{S}_m$.
\end{itemize}

	\bibliographystyle{plain, abbrv}

\end{document}